\newcommand{\R}{\mathbb{R}}
\newcommand{\Z}{\mathbb{Z}}
\newcommand{\bsubeq}{\begin{subequations}}
	\newcommand{\esubeq}{\end{subequations}}
\newcommand{\BI}{\begin{itemize}}
	\newcommand{\EI}{\end{itemize}}
\newcommand{\I}{\item}
\newcommand{\BE}{\begin{enumerate}}
	\newcommand{\EE}{\end{enumerate}}
\newcommand{\cred}{\color{black}}
\newcommand{\credrev}{\color{black}}
\newcommand{\cblue}{\color{blue}}
\newcommand{\xvar}{x}
\newcommand{\yvar}{y}
\newcommand{\cond}{L_i^\omega(\xvar)}
\newcommand{\indvar}{\mu^\omega}
\newcommand{\remsol}{\bar{Y}^\omega}
\newcommand{\node}{u}
\newcommand{\dualvar}{\delta}
\newcommand{\mpsol}{\hat{\mu}}
\newcommand{\Primal}[1]{\texttt{P#1}(\mpsol^\omega, \omega)}
\newcommand{\AltPrimal}[1]{\overline{\texttt{P#1}}(\mpsol^\omega, \omega)}
\newcommand{\Dual}[1]{\texttt{D#1}(\mpsol^\omega, \omega)}
\newcommand{\AltDual}[1]{\overline{\texttt{D#1}}(\mpsol^\omega, \omega)}
\newcommand{\bdd}[1]{\texttt{BDD#1}}
\newcommand{\vertexset}{\mathcal{V}}
\newcommand{\edgeset}{\mathcal{E}}
\newcommand{\scenset}{\Omega}
\newcommand{\stateset}{\mathcal{S}^\omega}
\newcommand{\termstateset}{\mathcal{T}^\omega}
\newcommand{\state}[1]{s^\omega_{#1}}
\newcommand{\transitionfunc}{\rho}
\newcommand{\neighb}{\mathcal{N}}
\newcommand{\nodes}{\mathcal{N}}
\newcommand{\arcs}{\mathcal{A}}
\newcommand{\zeroarcs}{\mathcal{A}^{\texttt{zero},\omega}}
\newcommand{\onearcs}{\mathcal{A}^{\texttt{one},\omega}}
\newcommand{\nodelayer}[1]{\mathcal{N}^\omega_#1}
\newcommand{\arclayer}[1]{\mathcal{A}^\omega_#1}
\newcommand{\BDD}{\mathcal{B}}
\newcommand{\viol}{\mathcal{I}^\omega}
\newcommand{\MP}{\texttt{MP}}
\newcommand{\SP}{\texttt{SP}}
\newcommand{\LB}{LB}
\newcommand{\UB}{UB}
\newcommand{\cutset}{\mathcal{C}}
\newcommand{\spobj}{\tau}
\newcommand{\map}{\sigma}
\newcommand{\prob}{p}
\newcommand{\Prob}{\mathbb{P}}
\newcommand{\xweights}{c}
\newcommand{\yweights}{d}
\newcommand{\graph}{G}
\newcommand{\rootnode}{\texttt{r}}
\newcommand{\term}{\texttt{t}}
\newcommand{\cvar}{CVaR_\alpha}
\newcommand{\var}{VaR_\alpha}
\newcommand{\expect}{\mathbb{E}}
\newcommand{\indicator}{\mathbb{I}}
\newcommand{\binxvar}{x^\texttt{B}}
\newcommand{\binindex}{n_\binxvar}
\newcommand{\intxvar}{x^\texttt{I}}
\newcommand{\intindex}{n_\intxvar}
\newcommand{\contxvar}{x^\texttt{C}}
\newcommand{\contindex}{n_\contxvar}
\newcommand{\yvarindex}{n_\texttt{\yvar}}
\newcommand{\costindex}{q(i)}
\newcommand{\costone}[1]{d^1_{#1}(\omega)}
\newcommand{\costtwo}[1]{d^2_{#1}(\omega)}
\newcommand{\secondstagecost}[1]{d_{#1}(\omega)}
\newcommand{\betamax}{\hat{\beta}^{\max}}
\newcommand{\zmax}{\hat{z}^{\max}}
\newproof{pf}{Proof}
\newproof{proof}{Proof}
\newtheorem{example}{Example}
\newtheorem{proposition}{Proposition}
\begin{document}
	
\begin{frontmatter}

	\title{Leveraging Decision Diagrams to Solve Two-stage Stochastic Programs with Binary Recourse and Logical Linking Constraints}

	\author[a]{Moira MacNeil%
		\corref{cor1}} 
	\ead{m.macneil@mail.utoronto.ca}

\author[b]{Merve Bodur}
\ead{merve.bodur@ed.ac.uk}

	\cortext[cor1]{Corresponding author}

	\affiliation[a]{organization={Department of  Mechanical and Industrial Engineering, University of Toronto},
		city={Toronto},
		state={Ontario},
		country={Canada}}
		
			\affiliation[b]{organization={School of Mathematics, The University of Edinburgh},
			city={Edinburgh},
			country={United Kingdom}}

	\begin{abstract}
		We generalize an existing binary decision diagram-based (BDD-based) approach of Lozano and Smith (MP, 2022) to solve a special class of two-stage stochastic programs (2SPs) with binary recourse, where the first-stage decisions impact the second-stage constraints. {\cred First,} we extend the second-stage {\cred problem} to a more general setting where logical expressions of the first-stage solutions enforce constraints in the second stage. {\cred Then, as} our primary contribution, {\cred we introduce} a complementary problem, {\cred that appears more naturally for many of the same applications of the former approach}, and {a distinct BDD-based} solution method, {\cred that is more efficient than the existing BDD-based approach on commonly applicable problem classes}. In the novel problem, second-stage costs, {\cred rather than constraints,} are impacted by expressions of the first-stage decisions. 
		In both settings, we convexify the second-stage problems using BDDs and parametrize either the BDD arc costs or capacities with first-stage solutions. 
		We extend this work by incorporating conditional value-at-risk and propose the first decomposition method for 2SP with binary 
		recourse and a risk measure. We apply these methods to a novel stochastic problem, {\cred namely stochastic minimum dominating set problem,} and 
		present numerical results to support their effectiveness.
	\end{abstract}

%
	\begin{keyword}
{\cred 	Stochastic programming \sep Binary decision diagrams \sep Benders decomposition	}
	\end{keyword}

\end{frontmatter}

 \section{Introduction}

Decision-making problems under uncertainty are often challenging to model and solve, especially when the decisions must be modelled as binary ``yes-or-no'' choices such as in the knapsack or facility location problems. One approach to address these problems is two-stage stochastic programming.  This modelling framework divides decision making into two stages: first we make immediate ``here-and-now'' decisions without the knowledge of the value of uncertain parameters, and second, after observing the outcome of the random parameters we make the remaining recourse decisions. Typically, the objective of a two-stage stochastic program (2SP) is to minimize the total cost given as the first-stage cost and the expected second-stage cost. We will study a class of 2SPs with binary recourse and specific structure that links first-stage and second-stage decisions and their costs.  \cite{lozano2018bdd} first introduced binary decision diagram-based (BDD-based) methods to solve such a problem class. {\cred A decision diagram is a directed acyclic graph which encodes feasible solutions of an optimization problem (traditionally for deterministic problems) as paths from its root node to the terminal node with path length corresponding to the objective value of the associated feasible solution, as such reduces a minimization problem into a shortest path problem.}

In this paper we generalize the family of problems studied by Lozano and Smith and extend their BDD-based methodology allowing it to be employed in a variety of application types. Moreover, as our primary contribution, we propose an alternative problem class that appears {\cred (more)} naturally in many of the same applications. We develop a distinct BDD-based algorithm to solve the novel problem class and show via computation on an application problem that this algorithm is more efficient than that of Lozano and Smith.

In the remainder of this section we will introduce the original problem of Lozano and Smith, followed by our generalization. We will then propose our novel alternative to that problem and give an array of applications for these problem classes.
We will then summarize the contributions of this work and give the paper outline.

\subsection{The Problem of  \cite{lozano2018bdd}} \label{subsec:ls_form}

Given $n = \binindex + \intindex + \contindex$ first-stage decision variables $\xvar$, and  real-valued objective and constraint coefficients, the two-stage stochastic programming problem considered by \cite{lozano2018bdd} is:
\bsubeq \label{prob:ls} 
\begin{alignat}{3}
{\textsc{Problem 1}}:	\min \ & {\xweights^\top \xvar} + \expect[\mathcal{Q}({\xvar}, \omega)]\\
	\text{s.t.} \ &  {\xvar}= (\binxvar, \intxvar, \contxvar) \in \{ x:	A\xvar \geq b, \xvar  \in\{0,1\}^{\binindex}  \times \Z^{\intindex} \times \R^{\contindex}  \}\\*[0.15cm]
%
&\hspace*{-4cm} \text{where the value (recourse) function is} \nonumber \\*[0.15cm]
	\mathcal{Q}({\xvar}, \omega) = 	\min \ & \secondstagecost{}^{\top} {\yvar} \\
\text{s.t.} \ &  \binxvar_i=0  \implies {\yvar} \in \mathcal{W}_i(\omega) & \hspace*{-3.5cm} \forall i =1,\hdots, \binindex \label{eq:ls_logical2} \\
&{\yvar} \in \mathcal{Y}(\omega) \subseteq  \{0,1\}^{\yvarindex} \label{eq:sls_yconstr}
\end{alignat}%
\esubeq 
and $\omega$ is a random vector defined on the probability space $(\Phi, \mathcal{F}, \mathcal{P})$.
The logical constraints \eqref{eq:ls_logical2} of the second-stage problem 
enforce constraints $\mathcal{W}_i(\omega)$ on the recourse variables $\yvar$  only if $\binxvar_i =0$. This amounts to the selection of a \textit{single} first-stage variable $\binxvar_i$ making the constraints in $\mathcal{W}_i(\omega)$ redundant. 

The key insight of Lozano and Smith is that this second-stage problem can be reformulated using  BDDs where the arc capacities are parameterized by the first-stage binary variable solutions, which in turn makes the problem amenable to be solved via the Benders decomposition algorithm. They use this so-called BDD-based reformulation to create a single BDD equivalent to each second-stage scenario problem, which can then be passed a  first-stage candidate solution and solved efficiently via a shortest path algorithm. The shortest-path solution is then used to generate Benders cuts. 
Broadly, the algorithms for solving 2SP outlined in this paper use the same mechanisms: {\cred reformulate the second-stage model as a BDD that incorporates some parameterization of the first-stage solutions, then iteratively solve the first-stage problem for a candidate solution, utilize it to find the shortest path of the BDD and derive a cut that will be used to generate an improved candidate solution in the first stage.} However, the details of the reformulations proposed in this work are quite distinct and can lead to very different computational results.

\subsection{A Generalization of Problem \ref{prob:ls}}
We will consider a generalization of Problem \ref{prob:ls} where the left-hand-side of the implication in the constraints \eqref{eq:ls_logical2} is now a \emph{logical expression} of the first-stage binary variables $\binxvar$. We also present the first-stage problem in a more general setting where the first-stage decisions are constrained to the set  $\mathcal{X}$ which does not have any restrictions.
Most notably, the first-stage problem does not need to be linear, which broadens the possibilities for applications. We will use a similar procedure to that in \citep{lozano2018bdd} to reformulate \eqref{prob:constrs} via a BDD that has  \emph{arc capacities} parameterized by logical functions of first-stage solutions.
\bsubeq\label{prob:constrs}
\begin{alignat}{2}
	{\textsc{Problem 2}}:	\min  \ & {\xweights^\top \xvar} + \expect[\mathcal{Q}({\xvar}, \omega)]\\
	\text{s.t.} \ & {\xvar} = (\binxvar, \intxvar, \contxvar) \in \mathcal{X} \subseteq  \{0,1\}^{\binindex} \times \Z^{\intindex} \times \R^{\contindex}  \\*[0.15cm]
&\hspace*{-3.4cm} \text{where} \nonumber \\*[0.15cm]
	\mathcal{Q}({\xvar}, \omega) = 	\min \ & \secondstagecost{}^{\top} {\yvar} \label{eq:recourse2} \\
	\text{s.t.} \ &  \indicator(\cond) = 1 \implies {\yvar} \in \mathcal{W}_i(\omega) & \quad \forall i =1,\hdots,m_1   \label{eq:sp2_logical2} \\
	&{\yvar} \in \mathcal{Y}(\omega) \subseteq  \{0,1\}^{\yvarindex}. \label{eq:sp2_yconstr} 
\end{alignat}%
\esubeq 
In the new logical constraints \eqref{eq:sp2_logical2}, the function $\cond$ is a logical expression on the first-stage variables, and $\indicator(\cdot)$ is an indicator function such that $\indicator(\cond)=1$ if the expression $\cond$ evaluates to true. These constraints require that for a fixed $i$, when $\cond$ is true, the constraint set in $\mathcal{W}_i(\omega)$ is enforced for ${\yvar}$. 
This logical expression is scenario-dependent as there are cases where $\cond$ may be true in one scenario but not another.  

{\cred We note that in the BDD-based algorithm, explained in Section \ref{sec:benders_alg}, an indicator variable for the logical expression will be used to capacitate some of the BDD arcs, limiting the feasible second-stage solutions set under certain first-stage solutions. However, in many of the applications we consider there is no need for a separate indicator expression as we can directly use an expression of the first-stage variables. This is the strength of the generalized problem, in theory we can use any logical expression of the first-stage binary variables, including those consisting of multiple variables as well as nonlinear ones, but in practice many combinatorial applications do not require the use of an extra indicator variable to model such expressions.}
We {\cred also} remark that whereas in Problem \ref{prob:ls} the constraint set is enforced when the first-stage binary variable $\binxvar_i =0$, in our generalization we enforce constraints when the logical expression is true, that is, the indicator returns value $1$. This change was to remain consistent with Problem \ref{prob:costs} in Section \ref{subsec:alternative}. 


\subsection{An Alternative to Problem  \ref{prob:constrs}} \label{subsec:alternative}

One of the limitations of the BDD-based reformulation for the second-stage problem of  \eqref{prob:constrs} is that the BDDs are less likely to contain isomorphic subgraphs. If the arc capacities of an otherwise isomorphic subgraph are not the same, that subgraph cannot be used to reduce the size of the BDD. Interested readers are referred to \citep{bergman2016decision, wegener2000} for more information on reducing BDDs. 
Since the reformulation of  \eqref{prob:constrs}  relies on parameterizing the arc capacities using logical functions of first-stage solutions, the resulting (reduced) BDDs are larger than if the arc capacities were all unary, as such finding the shortest path can be computationally expensive. Motivated by the search for a more computationally efficient solution approach, we present an alternative to  Problem \ref{prob:constrs} that we will also reformulate using BDDs  but whose reformulation 
instead manipulates the \emph{arc costs}. {\cred In this formulation all the arcs of the BDD have the same capacity, which will result in smaller reduced BDDs.}
\bsubeq \label{prob:costs}
\begin{alignat}{2}
{\textsc{Problem 3}}:	\min \ & {\xweights^\top \xvar} + \expect[\mathcal{Q}({\xvar}, \omega)]\\
		\text{s.t.} \ & {\xvar} = (\binxvar, \intxvar, \contxvar) \in \mathcal{X} \subseteq  \{0,1\}^{\binindex} \times \Z^{\intindex} \times \R^{\contindex}  \\*[0.15cm]
&\hspace*{-3.34cm} \text{where} \nonumber \\*[0.15cm]
	\mathcal{Q}({\xvar}, \omega) = 	\min \ & (\costone{} + \costtwo{})^\top {\yvar} \label{eq:altprob_obj} \\
	\text{s.t.} \ 	&  \indicator\left(\cond\right) = 1 \implies \costone{\costindex} =   0 & \quad \forall i =1,\hdots,m_1 \label{eq:sp1_logical}\\
	&  {\yvar} \in \mathcal{Y}(\omega) \subseteq   \{0,1\}^{\yvarindex}. \label{eq:sp1_yconstr}
\end{alignat}%
\esubeq %
The first-stage problem is exactly that of Problem \ref{prob:constrs}. 
The second-stage problem minimizes the cost \eqref{eq:altprob_obj} which is comprised of two parts, where we require {\cred only} $\costone{} \geq 0$. 
The logical constraints \eqref{eq:sp1_logical} enforce that if $\cond$ is true, the cost of decision $\yvar_{\costindex}$ is reduced to  $\costtwo{\costindex}$ which can be $0$. {\cred  Here we use $\costindex$ as the index of the second-stage cost since the $i^{\text{th}}$ logical expression may not affect  the $i^{\text{th}}$ cost coefficient.}
The BDD-based reformulation for Problem \ref{prob:costs} is not straightforward as it requires some manipulation to be able to derive Benders cuts. This problem structure is unconventional when compared to the literature but lends itself well to many applications.

{\cred We remark that Problems \ref{prob:constrs} and \ref{prob:costs} are not true alternatives to each other as they do not have the same feasible region. What we mean is that given a fixed application we can model it as  Problem \ref{prob:constrs} or as Problem \ref{prob:costs}.  We will next detail some applications which can be modelled and solved using either method. We stop short of a formal proof of this equivalence but we have not yet found an application that can be modelled as one of these problems but not the other. 
	}

\subsection{Applications of Problems  \ref{prob:constrs} and \ref{prob:costs}} \label{subsec:applications}

Generally, in applications for problems with the structure of Problem \ref{prob:constrs} a set of constraints can either be satisfied in the first stage or in the second stage. 
Whereas, applications for problems with the structure of Problem \ref{prob:costs} appear when the first-stage decisions allow for investment in an item and in the second stage the item can be used at a reduced cost, typically for free. 
All the Problem \ref{prob:costs} instances that we have encountered can also be formulated as a Problem  \ref{prob:constrs} instance and vice versa but  the original problem structure may naturally lend itself to one over the other. 
As we will see in Section \ref{sec:recourse_reform}, the BDD-based representations may also favour one problem over the other. In this section we will overview a wide variety of applications where models of the form of Problems \ref{prob:constrs} and \ref{prob:costs} arise, a summary can be found in Table \ref{table:applications}.
Note that although we do not explicitly state it for each application, the constraint set in $\mathcal{Y}(\omega)$ typically differs between the two recourse problems.

\subsubsection{Stochastic shortest path}

In the stochastic shortest path (SSP) problem of \citep{ravi2006uncert}, the source is known but the destination vertex is uncertain. In the first stage, a subset of edges is selected, then the destination is observed, and in the second stage edges are selected to complete the path from source to destination.  \cite{ravi2006uncert} introduce a variation on the problem where the first-stage solution must form a tree and the edge costs are also stochastic while still forming a metric, which they call the tree-stochastic metric shortest paths (TSMSP) problem. 

Consider a formulation of the TSMSP problem where the variable $x_e =1$ corresponds to selecting edge $e$ in the first-stage tree. Then the edge costs and destination are observed and the remaining edges are selected where the variable $y_e= 1$ corresponds to selecting an edge in the second stage. 
Given a vertex set $\vertexset$ and a subset $U \subset \vertexset$, let $\delta(U)$ be the set of edges with exactly one endpoint in $U$. Denoting the source node $s$ and the destination node in a given scenario $t$, the logical constraints in Problem 2 enforce 
for each $U  \subset \vertexset$ with $t \in U$ and $s \notin U$ 
$\sum_{e \in \delta(U)} x_{e} = 0 \implies \sum_{e \in \delta(U)} y_{e} \geq 1$. 
Denoting the second-stage cost of an edge as $d_e$, the logical constraints in the alternative Problem 3 for each edge $e$ are
$x_e = 1 \implies d_e = 0$. 

\subsubsection{Stochastic facility location and assignment}

Consider the uncapacitated facility location and assignment problem with a stochastic client set and stochastic facility opening costs of \citep{ravi2006uncert}, where in the first stage we can select some facilities in the facility set $F$ to be opened, then observe the client set $D$ and the costs $c$, then assign clients to facilities and take the recourse action of opening more facilities if required to serve all the clients.

Let variable $x_i =1$ if facility $i \in F$ is opened in the first stage and $y_i =1$ if it is opened in the second stage, and let $z_{ij} =1 $ if client $j \in D$ is assigned to facility $i \in F$. Then, the logical constraints in Problem 2 are, for each $i \in F$ and $j \in D$,
$x_i = 0 \implies z_{ij} \leq y_i$.
These constraints enforce if a facility is not opened in the first stage we must enforce that no client can be assigned to it unless it is opened in the second stage.
In Problem 3,  the logical constraints for each facility $i \in F$ are
$ x_i = 1 \implies c_i = 0$, that is, if a facility is opened in the first stage, it can be used in the second stage at no extra cost.

\subsubsection{Stochastic knapsack}
We can also apply our problem setting to stochastic knapsack problems with uncertain profit, similar to the robust version formulated by  \cite{arslan2021decomposition}. In this problem, we must first commit to procuring (producing or outsourcing), a subset of a set of items, $I$, with stochastic costs. These items have expected profit $p^0_i$, weight $c^0_i$, and are subject to a first-stage knapsack constraint with capacity $C^0$.
We observe a profit degradation $d_i$, and then have three recourse options for each item.
Firstly, we can choose to produce the item at the reduced profit $p_i = p^0_i - d_i$ and consume second-stage knapsack capacity $c_i$. 
Secondly, we can repair the item consuming some extra second-stage knapsack capacity, $c_i + t_i$, but recovering the original expected profit of $p_i^0$, or finally, we can outsource the item so that the actual profit of the outsourced item is $p^0_i - f_i$. The items produced or repaired in the second stage are subject to a knapsack constraint with capacity $C$.

Let $x_i = 1$ if we select item $i$ to procure in the first stage, and in the second stage let $y_i = 1$ if we select item $i$ to produce ourselves and let $r_i = 1$ if we choose to repair item $i$. 

%
In Problem 2 the logical constraints are 
$ x_i = 0 \implies y_i \leq 0$ for each item $i$, since if item $i$ is not selected for production in the first stage it can not be produced in the second stage.
In Problem 3,  the logical constraints for each item $i$ are
$ x_i = 0 \implies f_i =0 ${\credrev, which yields the objective function of $- d_i y_i + d_i r_i $ in the second-stage problem. Along with the constraints $r_i \leq y_i$ and the fact that both $y_i$ and $r_i$ consume second-stage knapsack capacity, this results in an optimal second-stage solution with $y_i = r_i = 0$. Accordingly, it ensures that}
not selecting item $i$ for procurement in the first stage means it cannot be produced nor outsourced in the second stage and will not result in any profit. The full stochastic knapsack problem model can be found in \ref{app:stochknap}.

\subsubsection{Capital budgeting under uncertainty}

We consider a stochastic investment planning problem similar to the robust optimization version of \citep{arslan2021decomposition}. A company has allocated a budget to invest in projects with uncertain profit now or to wait and invest in them later. The early investments are incentivized by higher profits than if the company waits, where the reduced second-stage profit is denoted $p_i$ for project $i$. 
Let $x_i = 1 $ if a project $i$ is selected in the first stage and let $y_i = 1$ if it is selected in the second stage.
In Problem 2 the logical constraints  for each project $i$ are 
$x_i = 1 \implies y_i \leq 0$ that is, if a project is selected in the first stage it cannot be selected in the second stage.
In Problem 3,  the logical constraints project $i$ are
$x_i = 1 \implies p_i =0 $ as a project selected in the first stage will not gain profits in the second stage.

\subsubsection{Stochastic vertex cover}

Consider a stochastic vertex cover problem, such as in \cite{ravi2006uncert}, on a graph $\graph = (\vertexset, \edgeset_0)$. In the first stage, a set of vertices is selected, then the scenario set of edges $\edgeset$ and the second-stage vertex costs $c_v$ are observed, notice that  $\edgeset$ may or may not be a subset of  $\edgeset_0$.   Finally, in the second stage, the vertex set is extended to a vertex cover of the edges in $\edgeset$.
Let $x_v = 1$ if vertex $v \in \vertexset$ is selected in the first stage, and let $y_v = 1$ if vertex $v \in \vertexset$ is selected in the second stage. In Problem 2 the logical constraints are for each $\{u,v\}  \in \edgeset\cap \edgeset_0$ {\cred in the form of} 
$ x_v + x_u = 0 \implies y_v+y_u \geq 1$,
that is if an edge that appears in both stages is not covered in the first stage, it must be covered in the second stage.
In Problem 3,  the logical constraints for each $v \in \vertexset$ are
$ x_v = 1 \implies c_v = 0$, that is, if a vertex $v$ is selected in the first stage, there is no additional cost to cover the edges incident to that vertex in the second stage.

\subsubsection{Submodular optimization} \label{subsubsec:submod}
In their recent work \cite{Coniglio2020submodular} study problems which maximize a concave, strictly increasing, and differentiable utility function over a discrete set of scenarios. The decision maker must choose from two sets of items, the first is a set of meta items $\hat{N}$ that is linked to the set of items $N$ via a covering relationship, i.e., an item $j \in N$ can only be selected if a meta item $\ell \in \hat{N}$ covering it is also selected.
The objective function is a linear combination of submodular set functions and thus is submodular itself. These problems arise in social network applications, marketing problems and stochastic facility location settings \citep{Coniglio2020submodular, kempe2003max}.


In our setting, we have first-stage decisions $x_\ell = 1 $ if meta item $\ell \in \hat{N}$ is selected and the second-stage decisions $y_j=1$ if item $j \in N$ is selected. Given a stochastic non-negative item rewards $a_j$ and stochastic  non-negative  constant $d$, the second-stage objective function for a fixed scenario is 
\begin{equation} \label{eq:submod}
	f\left( \sum_{j \in N} a_jy_j + d\right),
\end{equation}
where we remark that this function is nonlinear. 
Here we have a pure binary optimization problem, with a nonlinear objective and linear constraints. As we are in the stochastic setting, we will take the expectation of \eqref{eq:submod} over a set of scenarios for $a_j$ and $d$, each of which occur with a probability $p_i$. Therefore we are able to decompose the second-stage objective function into a sum of nonlinear functions, one function \eqref{eq:submod} for each scenario. 
This is precisely the setting of \cite{bergman2018nonlin}, who solve a binary optimization problem with linear constraints and a nonlinear objective function that can be written as the sum of auxiliary nonlinear functions. Their work models each component of the objective as a set function and then uses dynamic programming  to create a decision diagram for each component.  In our setting, we can model each scenario in the second stage as a decision diagram.
In our setting the logical constraints in Problem 2 are, for each item $j$,
$ \sum_{\ell \in \hat{N}(j)} x_\ell = 0  \implies y_j = 0,$
where we represent the set of meta items that cover item $j$ by $\hat{N}(j)$. Thus if no meta item covering item $j$ is selected in the first stage we cannot select item $j$ in the second stage.
In Problem 3,  the logical constraints for each item $j$ are
$\sum_{\ell \in \hat{N}(j)} x_\ell = 0  \implies a_{j} = 0$.
That is, if no meta item covering item $j$ is selected in the first stage, no profit can be gained from $j$ in the second stage.

\subsection{Stochastic minimum weight dominating set} \label{subsubsec:smwds}
As a final example of the studied structures, we propose a novel stochastic dominating set problem that we call the stochastic minimum weight dominating set (SMWDS). We present further details of the problem and numerical results in Section \ref{sec:smwds} and will use it as a running example throughout this paper.
 
In the design of wireless sensor networks it is often useful to partition the network so that a subset of sensors are more powerful than the others and are able coordinate the rest of the network \citep{santos2009}. Ideally, these powerful sensors will be linked to every other sensor in the network which amounts to finding a dominating set in the wireless sensor network \citep{boria2018}. 
We often wish to identify the Minimum Weight Dominating Set (MWDS), where node weights represent mobility characteristics or characteristics such as the residual energy or the available bandwidth \citep{bouamama2016hybrid}. Other applications of MWDS include disease suppression and control \citep{PhysRevE.90.012807}, the study of public goods in networks \citep{bramoulle2007public}, social network analysis \citep{lin2018}, and routing in wireless networks \citep{Li2001routing}.

In many applications, the network structure is uncertain or time dependent, thus a deterministic setting is not a realistic approximation of the problem, however work in the stochastic setting has been limited. 
In \citep{boria2018}, a graph with an existing MWDS is given as input and the nodes fail with some given probability, the goal is then to find a new dominating set which includes any functional sensors from the original solution. 
The authors propose solution algorithms for cases where the problem is polynomial and polynomial approximations for the general case, and give an IP formulation for the problem but do not implement it. In \citep{torkestani2012}, the graph is fixed but the node weights are stochastic with an unknown distribution, and the objective is to find a connected dominating set. 
The authors present a series of learning automata-based algorithms and show they reduce the number of samples needed to construct a solution. \cite{He2011genetic} present a variant where the edges can transmit signals with a given probability and the objective is to find a connected dominating set with a certain reliability. 

In the SMWDS problem we assume the existence of a graph where the vertex weights are known and no failure has been observed yet, 
we then observe changes in the vertex weights and  vertex failures. 
To the best of our knowledge, we present the first study of the probabilistic MWDS problem on stochastic graphs. 
Given a graph $\graph = (\vertexset, \edgeset)$, for each $v \in \vertexset$, let $\xweights_v > 0$ be the first-stage vertex weights and let $\secondstagecost{v} \geq 0$ be the second-stage vertex-weight random variables. Let binary decision variables $\xvar_v = 1 $ if the vertex $v \in \vertexset$ is assigned to the dominating set in the first stage, and $\yvar_v = 1 $ if the vertex $v$ is assigned to the dominating set in the second stage under equally probable scenario $\omega \in \scenset$. We also define the neighbourhood of a vertex in the first stage $\neighb(v) = \{u \in \vertexset : \{u,v\} \in \edgeset\}$,  and in the second stage $\neighb_\omega'(v) = \{u \in \vertexset : \{u,v\}\in \edgeset, \secondstagecost{u} \neq 0\}$ where we only consider in the neighbourhood vertices that have not failed, similarly for the second-stage vertex set $\vertexset_\omega' = \{v \in \vertexset : \secondstagecost{v} \neq 0\}$. 
Given this formulation, the logical constraints in Problem 2 for each $v \in \vertexset'$ are
$
\xvar_v +   \sum_{u \in \neighb'(v)}  \xvar_u  =0 \implies \yvar_v + \sum_{u \in \neighb'(v)} \yvar_u  \geq 1.
$
They enforce if neither $v$ nor one of its second-stage neighbours is selected in the first stage, one of those vertices must be selected in the second stage. That is, if $v$ is  not dominated in the first stage, it must be dominated in the second stage.
In Problem \ref{prob:costs},  the logical constraints for each $v \in \vertexset'$ are
$
\xvar_v = 1 \implies  \yweights_v = 0, 
$
i.e., if vertex $v$ is selected in the first stage, it can be used to dominate second-stage vertices at no additional cost.

%

%
%
%

\begin{table}[h] \caption{\label{table:applications} Applications of Problems  \ref{prob:constrs} and \ref{prob:costs}.}
\small
	\centering
	\scalebox{0.965}{
	\begin{tabular}{l l l r}
		\toprule
		Stochastic   &Problem \ref{prob:constrs} &  Problem  \ref{prob:costs} & References\\
				 Application  &Logical Constr.&  Logical Constr. & \\
		\midrule
		 Shortest path &$\sum_{e \in \delta(U)} x_{e} = 0 $&$x_e = 1 \Rightarrow  d_e = 0$& \citep{ravi2006uncert}\\
		 & \quad$\Rightarrow \sum_{e \in \delta(U)} y_{e} \geq 1$&& \\
		 		\midrule
		 Facility location  &$x_i = 0 \Rightarrow z_{ij} \leq y_i$&$ x_i = 1 \Rightarrow c_i = 0$& \citep{ravi2006uncert}\\
		  \quad \& assignment &&& \\
		  		\midrule
		  		Vertex cover &$x_v + x_u = 0 $&$ x_v = 1 \Rightarrow c_v = 0$& \citep{ravi2006uncert}\\
		  		&\quad$\Rightarrow y_v+y_u \geq 1$&&\\
		  				\midrule
		 Knapsack &$x_i = 0 \Rightarrow y_i \leq 0 $&$ x_i = 0 \Rightarrow f_i =0$& \citep{arslan2021decomposition}\\
		 		\midrule
		Capital budgeting &$x_i = 1 \Rightarrow y_i \leq 0$&$x_i = 1 \Rightarrow p_i =0 $& \citep{arslan2021decomposition}\\
		\midrule
		Submodular  & $ \sum_{\ell \in \hat{N}(j)} x_\ell = 0$&$\sum_{\ell \in \hat{N}(j)} x_\ell = 0 $& \citep{Coniglio2020submodular},\\
			\quad optimization&\quad$  \Rightarrow y_j = 0$& \quad $ \Rightarrow a_{j} = 0$& \citep{bergman2018nonlin}\\
					\midrule
			Minimum weight  &$\xvar_v +   \sum_{u \in \neighb'(v)}  \xvar_u  =0 $&$\xvar_v = 1 \Rightarrow  \yweights_v = 0$& Section \ref{sec:smwds}\\
		dominating set &\quad $\Rightarrow \yvar_v + \sum_{u \in \neighb'(v)} \yvar_u  \geq 1$\quad&& \\
		\bottomrule
	\end{tabular}
	}
\end{table}

\subsection{Contributions and Outline}

In the remainder of this paper we will present the BDD-based reformulations for the recourse structures in  Problems  \ref{prob:constrs} and \ref{prob:costs} and incorporate them into a Benders decomposition algorithm. We remark that the reformulation is non-trivial for Problem  \ref{prob:costs}. To demonstrate the strength of both formulations we will analyze the strength of the Benders cuts derived from the BDDs as compared to each other and to the integer L-shaped cuts. We will also extend these problems to the risk averse setting, by showing how to incorporate conditional value-at-risk (CVaR) into the models. Finally, we will examine a novel application, the SMWDS problem, and present computational results.

The major contributions of this study 
are:
\BI     \setlength{\itemsep}{0.25pt}
\I Proposing a novel BDD-based method to solve 2SP with binary recourse, this approach  parametrizes BDD solutions using arc costs. We remark that these ideas have potential to be extended to other problems where a convex representation of second-stage problems is difficult to achieve, for example, 2SP with integer recourse or non-linear second-stage problems.  
\I {\cred Extending} the related work of \cite{lozano2018bdd} to solve similar problems, which parametrizes BDD solutions using arc capacities. The generalization allows the work of these authors to be applied to  problems where the second-stage constraints are not explicitly linked to a single first-stage variable. 
\I Extending our findings to a risk-averse setting, which, to our knowledge, is the first decomposition method for 2SP with binary recourse and using CVaR.
\I Proposing the novel SMWDS problem and applying our methodologies to it.
\EI 

The rest of the paper is organized as follows. In Section \ref{sec:litreview}, we review the related BDD and stochastic programming literature. In Section \ref{sec:benders_alg}, we give an overview of the Benders decomposition framework. In Section \ref{sec:recourse_reform}, we reformulate the recourse problems of \eqref{prob:constrs} and  \eqref{prob:costs} using BDDs. Next, in Section \ref{sec:benders}, we describe how to derive cuts from the BDDs that will be used in the Benders decomposition algorithms and then compare their strength.
In Section \ref{sec:cvar} we extend the results to a risk-averse setting. In Section \ref{sec:smwds} we present computational results on the novel SMWDS problem. Finally, in Section \ref{sec:concl} we conclude the paper.
We note that for concision all the proofs are provided in \ref{app:proofs}.

\section{Literature Review} \label{sec:litreview}

\textit{Decomposition algorithms} are a cornerstone of the methods for solving 2SP.  
 \cite{LAPORTE1993} proposed the first extension of traditional Benders decomposition (i.e., the L-shaped method) to the integer setting, which they called the integer L-shaped method. They consider binary first-stage variables and complete recourse, and follow a standard branch-and-cut procedure to iteratively outer approximate second-stage value function. The integer L-shaped method is a special case of the logic-based Benders decomposition first introduced by  \cite{HookerLBBD}.
We will compare our BDD-based methods to the integer L-shaped method for 2SP with binary recourse to derive logic-based Benders cuts.
\cite{caroe1998} generalize the integer L-shaped method by proposing a Benders decomposition algorithm for 2SP with continuous first-stage variables and integer recourse. They derive feasibility and optimality cuts via general duality theory which leads to a nonlinear master problem. 
 \cite{Sherali2002} use a reformulation-linearization technique or lift-and-project cutting planes to generate a partial description of the convex hull of the feasible recourse solutions for problems where the variables of both stages are binary.  \cite{Sherali2006} also use a reformulation-linearization technique or lift-and-project cutting planes in a similar way
but for problems with mixed-integer variables in the first and second stage.
 \cite{ahmed2004finite} propose a finitely terminating algorithm for 2SP with  mixed-integer first stage and pure-integer second-stage variables. Their algorithm exploits structural properties of the value function to avoid explicit enumeration of the search space. 
 \cite{Zhang2014} consider 2SP with pure-integer variables in both stages, they propose a Benders decomposition algorithm that utilizes parametric Gomory cuts to iteratively approximate the second-stage problems. 
 \cite{Qi2017} propose an ancestral Benders cutting plane algorithm for 2SP with mixed-integer variables in both stages and where the integer variables are bounded. 
 \cite{li2019nlp} present a Benders decomposition-based algorithm for nonlinear 2SP with mixed-binary variables in both stages. 
Most recently, \cite{van2020converging} consider 2SP with mixed-integer recourse, they propose a recursive scheme to update the outer approximation of the recourse problem and to derive a new family of optimality cuts called scaled cuts.

The literature also contains many decomposition algorithms developed for the \textit{special case} of binary first-stage variables and integer second-stage variables 
\citep{angulo2016, Gade2014,Ntaimo2010disjunctive, Ntaimo2013Fenchel,  Ntaimo2007, Ntaimo2008, Sen2005, Sen2006}.
A wide array of other approaches to solve stochastic integer programs (IPs) exactly have been proposed including Lagranian duality-based methods \citep{caroe1999dual}, enumeration algorithms \citep{schultz1998solving}, branch-and-price schemes \citep{lulli2004branch}. Most recently neural networks and supervised learning have been used to approximate the second-stage value function \citep{dumochelle2022neur2sp}. 

A natural extension of stochastic programming models is to incorporate \textit{risk measures}, especially in fields such as finance and natural disaster planning. However, the literature contains few works which incorporate risk measures into 2SP with integer second-stage variables.
 \cite{schultz2006} consider 2SP with CVaR and integer variables in the recourse problem. Their solution method uses Lagrangian relaxation of nonanticipativity to convexify the second-stage problems. 
 \cite{Noyan2012} proposes the first Benders decomposition-based algorithms for 2SP with continuous second-stage problems and the CVaR risk measure, and apply these algorithms to disaster management.
In their work, \cite{vanbeesten2020} consider mixed-integer recourse problems with CVaR, where they derive convex approximation
models and corresponding error bounds to approximate the second-stage problem. 
  \cite{arslan2021decomposition} study robust optimization problems where the second-stage decisions are mixed-binary. They propose a relaxation that can be solved via a branch-and-price algorithm, and give conditions under which this relaxation is exact. We will show that CVaR can be incorporated into problems with binary recourse and solved via decomposition using either BDDs or integer L-shaped cuts to convexify the second-stage problems.

We refer readers interested in background on the use of decision diagrams (DDs) in optimization to \citep{bergman2016decision} and \citep{Castro2022}. Here we will focus on the literature at the \textit{intersection of DDs and stochastic optimization}, which has been limited to date.
 \cite{haus2017scenario} consider endogenous uncertainty where decisions can impact scenario probabilities. They characterize these probabilities via a set of BDDs to obtain a mixed-integer programming (MIP) reformulation. 
As discussed in Section \ref{subsec:ls_form}, \cite{lozano2018bdd} consider 2SP with pure-binary second-stage variables, they consider linear first-stage constraints and generic second-stage constraints. In the class of problems they study, selection of a first-stage binary variable makes a set of second-stage constraints redundant. They model the second-stage problems via BDDs to derive Benders cuts and demonstrate their techniques on stochastic traveling salesman problems.
 \cite{serra2019lastmile} propose two-stage stochastic programming for a scheduling problem, where they model both stages via multivalued DDs and link the two stages through assignment constraints to obtain an IP.  \cite{castro2021combinatorial} propose a BDD-based combinatorial cut-and-lift procedure which they apply to a class of pure-binary chance-constrained problems. 
 Most recently,  \cite{hooker2022} presents stochastic decision diagrams for discrete stochastic dynamic programming models by incorporating probabilities into the arcs of the DDs. These diagrams can then be used to derive optimal policies for the problem. 
 Similar to \cite{lozano2018bdd}, we will consider pure-binary second-stage variables and model these problems via BDDs. We remark that these second-stage problems need not be linear and the structure of the BDDs in our novel problem is distinct from those of the generalization of  Lozano and Smith.

%

%
%

\section{Benders Decomposition Framework} \label{sec:benders_alg}

We solve Problems \ref{prob:constrs} and \ref{prob:costs} using a Benders decomposition framework and reformulate the recourse problems using BDDs. 
Given a finite set of scenarios $\Omega$, and a {\credrev set of cuts indexed by $\cutset = \{1, 2, \hdots\}$} we formulate the Benders master problem as:
	\bsubeq\label{form:rmp}
\begin{alignat}{2}
	\MP(\cutset): \min_{\substack{
				{\xvar} \in \mathcal{X} \\ \indvar \in \{0,1\}^{m_1}}} \ & {\xweights^\top \xvar} + \sum_{\omega \in \Omega} p_\omega \eta_\omega\\
	\text{s.t.} \	& \indvar_i =	\indicator(\cond)  \qquad&&  \forall i =1,\hdots,m_1, \ \forall \omega \in \Omega \label{eq:mp_logical}\\
	& {\credrev \eta_\omega \geq C^k_{\mpsol^\omega}(\indvar) }&& {\credrev \forall k \in \cutset} \label{eq:benderscuts} 
\end{alignat}%
\esubeq%
The variables $\eta_\omega$ represent the second-stage objective function estimates, {\credrev whose values are determined via the cuts in \eqref{eq:benderscuts}. The function $C^k_{\mpsol^\omega}(\indvar)$ comes from the subproblem solved at iteration $k$ with the input vector $\mpsol^\omega$ and returns an affine expression of $\indvar$ (detailed in Section \ref{sec:benders})}.  In the master problem, the constraints \eqref{eq:mp_logical} have been modified with the use of an indicator variable $\indvar_i$. These variables are required to formulate the problem generically, as the logical expressions modify either the costs or capacities of BDD arcs, although typically, these variables and the associated constraints \eqref{eq:mp_logical}
are not necessary. As can be seen in Table \ref{table:applications}, the logical expressions of the first-stage variables are often expressions that must be equal to a binary value. In such cases we do not need the indicator variable or constraints and can directly pass these expressions to the BDDs as arc capacities or costs. 
 {\cred For example, if we are modelling vertex cover as Problem 
 \ref{prob:costs}, the logical expression is true if and only if $x_v =1$ for vertex $v$, we can therefore replace the indicator variable with $x_v$, thus work with arc costs of $1-x_v$ as we will show later. On the other hand, for Problem \ref{prob:constrs}, the logical expression is true if and only if $x_u+x_v =0$ for edge $(u,v)$, we can therefore replace the indicator variable with $1 - x_u - x_v$, thus work with arc capacities of $x_u + x_v$ as the complement of the indicator variable as we will show later.}
In the remainder of this work we will use the indicator variables $\indvar$ to show the results both for ease of presentation and to keep the problem formulations as general as possible. 



Using the indicator variables $\indvar_i \in \{0,1\}$ for all $i = 1, \hdots, m_1$, the constraints \eqref{eq:sp1_logical} for scenario $\omega$ become
\bsubeq
\begin{alignat}{2}
&{ \cred \indvar_i =	\indicator(\cond)} & \quad \forall i =1,\hdots,m_1 \label{eq:logical1_mp} \\
&	\indvar_i =1  \implies\costone{\costindex} =   0 & \quad\forall i =1,\hdots,m_1 \label{eq:logical1_sp}
\end{alignat}%
\esubeq %
and the constraints \eqref{eq:sp2_logical2} for scenario $\omega$ become 
	\bsubeq
\begin{alignat}{2}
&{ \cred \indvar_i =	\indicator(\cond)}&& \quad \forall i =1,\hdots,m_1\label{eq:logical2_mp} \\
&	\indvar_i =1  \implies  {\yvar} \in \mathcal{W}_i(\omega) && \quad\forall i =1,\hdots,m_1. \label{eq:logical2_sp}
\end{alignat}%
\esubeq %

We now have that constraints \eqref{eq:logical1_mp} and \eqref{eq:logical2_mp}  will now appear in the master problem as \eqref{eq:mp_logical}, and constraints  \eqref{eq:logical1_sp} and \eqref{eq:logical2_sp} remain in the respective subproblems. In the worst case, using these indicator variable will add $|\Omega|$-many variables for each of the $m_1$ logical constraints, however, as previously mentioned, the use of indicator variables is typically not necessary since the indicators are often simple expressions on the first-stage variables. We remark that if the technology matrix in the second stage is fixed, i.e., is not scenario dependent, we would not require the scenario index for the indicator variables.


For both of the BDD reformulation methods, the cutting plane algorithm is the same, the only difference will be in building the BDDs in the initialization step and the subproblems and therefore the cuts' structure. 
The details of the BDD construction are outlined in Section \ref{sec:recourse_reform} and an overview of the Benders algorithm can be found in \ref{app:bendersoverview}.

\section{Recourse Problem Reformulation via BDD} \label{sec:recourse_reform}

\subsection{Deterministic Equivalent Problem}
As is well established in the literature as good modelling practice, in both problem types, we assume relatively complete recourse, that is, for any ${x} \in \mathcal{X}$ and $\omega \in \Phi$ there exists a feasible solution to the second-stage problem.
We remark that in the case relatively complete recourse does not exist, feasibility cuts can easily be incorporated into the methods we propose.
Following the traditional approach in two-stage stochastic programming we focus on solving the sample average approximation (SAA) problem. We assume a finite number of realizations for the vector of random variables, each given by a scenario $\omega \in \Omega$, and probability $\prob_\omega$. Under such conditions, introducing scenario copies of the recourse decisions, $\yvar^\omega$, we are able to rewrite the two-stage problem as the monolithic deterministic equivalent problem:
\bsubeq\label{form:deteq}
\begin{alignat}{2}
	\min \ & {\xweights^\top \xvar} + \sum_{\omega \in \Omega} \prob_\omega  (\costone{} + \costtwo{})^\top  {\yvar}^\omega \\
	\text{s.t.} \ & {\xvar} = (\binxvar, \intxvar, \contxvar) \in \mathcal{X} \subseteq  \{0,1\}^{\binindex} \times \Z^{\intindex} \times \R^{\contindex}  	\\
	&  \indicator\left(\cond\right) = 1 \implies \costone{\costindex}=   0 && \quad \forall i =1,\hdots,m_1, \ \forall \omega \in \Omega \label{eq:logical1}\\
	&  {\yvar}^\omega \in \mathcal{Y}(\omega) && \quad \forall \omega \in \Omega \\
	&  {\yvar}^\omega \in  \{0,1\}^{\yvarindex} && \quad \forall \omega \in \Omega
\end{alignat}%
\esubeq%
Here we use the subproblem of Problem 3  to formulate the deterministic equivalent but the constraints of Problem 2 could be substituted similarly.
Typically, since the second-stage variables are binary, the problem \eqref{form:deteq} is solved as a monolithic  model, or in specific cases it is solved via the integer L-shaped decomposition method. However, solving the deterministic equivalent problem does not scale well when we have a large number of scenarios and the integer L-shaped method does not have strong cuts and may be slow to converge. This motivates the search for better methods to solve stochastic programs with binary recourse. We propose decomposition methods that use BDDs to convexify the second-stage problems and derive Benders cuts that are stronger than integer L-shaped cuts. 

\subsection{Recourse Problem Reformulation}

For both Problems \ref{prob:constrs} and \ref{prob:costs} a single BDD per scenario $\omega$ is generated in the initialization step of the algorithm.
{\credrev To construct the BDDs we start by assuming for  Problem  \ref{prob:constrs} that $\indicator(\cond) = 1$ for all $i =1,\hdots,m_1$ , that is, the most constrained problem for each scenario and for Problem  \ref{prob:costs}  we assume $\indicator(\cond) =0$ for all $i =1,\hdots,m_1$, that is, there is no discount for any of the variables. 
	For Problem \ref{prob:constrs}, as each node is added we check if the state is infeasible. If this infeasibility is due to violation of one or more of the constraints $\mathcal{W}_i(\omega)$,  for each violated constraint, we add a capacity to the arc that transitions to this state ensuring it can only be reached if that infeasibility is eliminated by a first-stage solution. 
	For Problem \ref{prob:costs}, when we create an arc corresponding to assigning a second-stage variable value $1$, we parametrize its cost by the first stage solution, so that if the logical expression is true, the cost is now $0$.}
The structure of these BDDs will remain the same during the Benders algorithm, only their arc costs or capacities will be updated via the master problem solution $\mpsol^\omega$  from one iteration to the next. 

 We denote a BDD for scenario $\omega$ as $\BDD^\omega = (\nodes^\omega, \arcs^\omega)$, and partition the arcs $\arcs^\omega$ into two sets: $\onearcs$ the arcs that assign a value of $1$ to a second-stage variable and $\zeroarcs$ the arcs that assign a value of $0$ to a second-stage variable. $\BDD^\omega$ consists of {\cred $\yvarindex$} arc layers 
$\arclayer{1}, \arclayer{2}, \hdots, \arclayer{{\yvarindex}}$ and of $\yvarindex+1$ 
node layers  $\nodelayer{1}, \nodelayer{2}, \hdots, \nodelayer{{\yvarindex}}, \nodelayer{{\yvarindex+1}}$, where layer $\nodelayer{1}$ contains only the root node $\rootnode$ and layer $\nodelayer{{\yvarindex+1}}$ contains only the terminal node $\term$.
We map from an arc $a$ to a second-stage variable index $j$ via the function $\map^\omega(a) = j$, for ease of presentation we assume the same variable ordering for each scenario and write the function as $\map(a)$. 
Each $\rootnode$-$\term$ path in a BDD represents a feasible solution to the associated recourse problem by assigning values  to $\yvar$, and an optimal recourse solution is one corresponding to a shortest path. {\cred Any node in the BDD associated with a state that cannot be completed to a feasible solution is called an infeasible state and can be pruned from the BDD.}

Without loss of generality, we assume the variable ordering of the second-stage variables $\yvar$ is $1,2,\hdots,\yvarindex$. We denote the set of all states by $\stateset$ and the set of terminal states by $\termstateset$. For a fixed scenario $\omega$, we denote the state where the variables  $1, 2, \hdots,k-1$,  but not variable $k$, have been assigned values by $\state{k}$. We define the state transition function $\transitionfunc_k: \stateset \setminus \termstateset \times \{0,1\} \mapsto \stateset$, where $\state{k+1} = \transitionfunc_k(\state{k}, \hat{y}_k)$, where $\hat{y}_k$ is the value assigned to variable $y_k$. We denote the set of remaining second-stage solutions that can be explored given a state $ \state{k}$ and  assignment $\hat{y}_k$, $\remsol(\state{k}, \hat{y}_k)$. 
We denote the BDDs resulting from Problems \ref{prob:constrs} and \ref{prob:costs} as \bdd{2} and \bdd{3}, respectively.

{\credrev We reiterate at this time that while we present Problems \ref{prob:constrs} and \ref{prob:costs} as alternatives we are abusing notation slightly. They are not truly equivalent, since they do not necessarily have the same feasible region of the second-stage problem given a first-stage decision. However as seen in Section \ref{subsec:applications}, for a fixed application we can model and solve it as an instance of either Problem \ref{prob:constrs} or Problem \ref{prob:costs}. In the following sections we will detail the BDD-based reformulations for both problems. These BDDs will typically result in very different recourse solution vectors, but will nonetheless solve the same problem with the same optimal objective value and first-stage optimal solution.
}

\subsection{\bdd{2} Reformulation}

We restate the method of  \cite{lozano2018bdd}, which allows us to reformulate the recourse problem \eqref{eq:recourse2}-\eqref{eq:sp2_yconstr}
 by building a BDD where some arcs have capacities parameterized by the master problem solution. We remark that when the variables (or expressions of variables) of the first-stage problem can be used directly as the indicators these variables (or expressions) will appear as the arc capacities. 

\subsubsection{\bdd{2} Representation}

The procedure to build \bdd{2}  is given in Algorithm \ref{alg:bdd1}. It begins by building the BDD assuming all second-stage constraints are enforced, using the transition function to move to a new state for each value choice for a variable $\yvar_j$. 
When a state, $s^{j+1}$, becomes infeasible after assigning variable $y_{j}$ value $\hat{y}_{j}$, we first check if the cause of the infeasibility is due to the constraint set in $\mathcal{Y}(\omega)$.
That is if $\remsol(s^j, \hat{y}_j ) \cap \mathcal{Y}(\omega) = \emptyset$, then no matter the value of the first-stage solution, every solution in $\remsol(s^j, \hat{y}_j )$ is infeasible. 
However, if the state is infeasible but $\remsol(s^j, \hat{y}_j ) \cap \mathcal{Y}(\omega) \neq \emptyset$, then there exists some set of violated constraints with indices in $\viol = \{ i : \remsol(s^j, \hat{y}_j ) \cap \mathcal{W}_i(\omega) = \emptyset\}$ corresponding to indicator variables $\indvar_i$. 
{\cred We remark that in Algorithm \ref{alg:bdd1} we abuse notation slightly by using $\state{\node}$ to denote the state at node $u$ within the expression $\remsol(\cdot)$. This is because at a node $u$ in node layer $\nodelayer{k}$ of the BDD we have assigned values to the first $k-1$ many  $\yvar$ variables via the arcs so in some sense it is the same as $ \state{k}$.}
The new state is defined by the transition function and by setting $\indvar_i =0$ for all $i \in \viol$, that is, we update the state defined by the transition function to reflect the second-stage constraints ${\yvar} \in \mathcal{W}_i(\omega)$ being made redundant. We then add capacities $\indvar_i$ for all $i \in \viol$ to the arc corresponding to the value $\hat{y}_j$. We remark that a single arc may have multiple capacities, one for each $i$ in $\viol$, all of which must equal one in order to utilize this arc. This ensures that the new state is only achievable when the violated constraints have been made redundant by the first-stage solutions. After the BDD is created, it is reduced so that no pair of nodes in a layer are equivalent, see \citep{bergman2016decision,wegener2000} for more information about BDD reductions.
	\begin{algorithm}[htbp]
	\KwIn{Transition functions $\transitionfunc_j$, scenario $\omega$}
	\KwOut{A reduced, exact BDD for \eqref{eq:recourse2}-\eqref{eq:sp2_yconstr}}
	Create root node $\rootnode$ with initial state $\state{\rootnode}$\;
	\ForAll{$j \in \{1,2,\hdots, \yvarindex\}$, $\node \in \nodelayer{j}$}{
		\For{$b \in \{0,1\}$}{
			Update $\remsol(\state{\node},b)$\;
			\eIf{$\transitionfunc_j(\state{\node}, b)$ is feasible}{
				Create node $v \in \nodelayer{{j+1}}$ with state $\transitionfunc_j(\state{\node}, b)$\;
				\eIf{$b = 1$}{
					Add uncapacitated arc $a = (u,v) \in \onearcs$ with cost $\yweights_{\map(a)}(\omega)$ to $\arclayer{j}$\;
				}{
					Add uncapacitated arc $(u,v) \in \zeroarcs$ with cost $0$ to $\arclayer{j}$\;
				}
			}{
				\If{$\remsol(\state{\node},b) \cap \mathcal{Y}(\omega) \neq \emptyset$}{
					Find set of violated constraint indices $\viol = \{ i : \remsol(\state{\node},b) \cap \mathcal{W}_i(\omega) = \emptyset\}$\;
					Create node $v \in \nodelayer{{j+1}}$ with state {\cred $\bar{\state{v}}$, which assigns value $b$ to $\hat{\yvar}_j$ and sets $\indvar_i = 0 \ \forall i \in \viol$ }\;
					\eIf{$b = 1$}{
						Add arc $a = (u,v) \in \onearcs$ with cost $\yweights_{\map(a)}(\omega)$ and capacities $(1-\indvar_i )\ \forall i \in \viol$  to $\arclayer{j}$\;
					}{
						Add arc $(u,v) \in \zeroarcs$ with cost $0$ and capacities $(1-\indvar_i )\ \forall i \in \viol$ to $\arclayer{j}$\;
					}
				}
			}
		}
	}
	Reduce the resulting BDD 
	\caption{\bdd{2} construction} \label{alg:bdd1}
\end{algorithm}

\begin{example} \label{ex:bdd1}
	We will use the SMWDS as a running example throughout this paper. An IP formulation for the recourse problem of SMWDS is:
\bsubeq
	\begin{alignat}{2} \label{form:smwds_sp1}
		\mathcal{Q}({\xvar}, \omega) = \min_{\yvar \in  \{0,1\} ^{|\vertexset' |} } &\sum_{v \in \vertexset'} d_v(\omega) \yvar_v \\
		\text{s.t.} \ & \yvar_v + \sum_{u \in \neighb'(v)} \yvar_u  \geq 1 -\left( \hat{\xvar}_v +  \sum_{u \in \neighb'(v)}  \hat{\xvar}_u \right)  \quad && \forall \ v \in \vertexset' \label{eq:cover1}
	\end{alignat}%
	\esubeq %
where $\vertexset'$ is the second-stage set of vertices, and  $\neighb'(v)$ is the  neighbourhood of vertex $v$ in the second stage. For illustrative purposes, we consider an instance with a single scenario whose scenario graph is pictured in Figure \ref{fig:smwds_graph}. 
\begin{figure}[htbp]
	\centering
	\begin{tikzpicture}[main_node/.style={circle,fill=white!80,draw,inner sep=0pt, minimum size=16pt},
		line width=1.2pt]
		\node[] (w0) at (0, 2) {\cblue$1 + 0$};
		\node[main_node] (v0) at (0, 1.5) {$0$};
		\node[]( w1) at (2,2) {\cblue$1 + 0$};
		\node[main_node] (v1) at (2,1.5) {$1$};
		\node[] (w2) at (2,-.5) {\cblue$1 + 0$};
		\node[main_node] (v2) at (2,0) {$2$};
		\node[] (w3) at (0,-.5) {\cblue$1 + 0$};
		\node[main_node,] (v3) at (0,0) {$3$};
		\node[] (w4) at (-1,1.25) {\cblue$1 + 0$};
		\node[main_node] (v4) at (-1,0.75) {$4$};
		
		\draw[-] (v3) -- (v0)  -- (v1) -- (v2) -- (v3) -- (v4) -- (v2);	

	\end{tikzpicture}
	\caption{A single scenario for the SMWDS problem, where the indices are given inside the vertices, and the vertex weights ($\secondstagecost{} = \costone{}+ \costtwo{}$) are outside the vertices. \label{fig:smwds_graph}}
\end{figure}

	 The indicator constraints for this problem are equivalent to the IP constraints \eqref{eq:cover1}, enforce that if  a vertex $v \in \vertexset'$ is not dominated in the first stage it must be dominated in the second stage.  We also remark that we do not need to introduce an indicator variable for this problem and instead use a summation of the appropriate first-stage variables. For a vertex $v$, if $ \xvar_v +   \sum_{u \in \neighb'(v)}  \xvar_u  =0$ then we must enforce covering constraints in the second stage. We use this expression as the negation of the indicator $(1-\indvar_v)$.
%
	   The \bdd{2} for this scenario is pictured in Figure \ref{fig:smwds_bdd1}. 
	    The state at each node represents the vertices that still need to be dominated, and each solid arc has a weight of one.
	    {\cred Recall that to build  \bdd{2} we assumed that all the second-stage constraints are being enforced, for this problem that means we have not dominated any vertices in the first stage, i.e., $\hat{\xvar}_v = 0$ for all $v \in \vertexset'$. Thus the state at the root node is all the vertices and if we choose $\hat{y}_0=1$ the state becomes $2,4$ as the vertices $0,1,3$ are dominated by vertex $0$.}
	   We can see that if we select $\yvar_{0} = \yvar_{1} = 0$, under the assumption that all second-stage constraints are enforced, if we also set $\yvar_{2} = 0$ we transition to an infeasible state. 
	 
	 This state is infeasible because we have $\hat{\xvar}_v =0$ for all $v \in \vertexset'$ while constructing the BDD,  and  $\yvar_{0} = \yvar_{1} = \yvar_{2} = 0$ and the domination constraint \eqref{eq:cover1} for vertex $1$ is violated since
	 $\yvar_{0} + \yvar_{1} + \yvar_{2} < 1 - \hat{\xvar}_0 -  \hat{\xvar}_1 -  \hat{\xvar}_2.$
{\cred 	 However, the violated constraint in this infeasible state is the logical constraint and not the other constraints on the second-stage variables, i.e., we have violated a constraint in the set $\mathcal{W}_1(\omega)$ not in the set $\mathcal{Y}(\omega)$ and $\viol = \{1\}$.
	 Thus, we could have selected one of $\xvar_0$, $\xvar_1$, or $\xvar_2$ to dominate vertex $1$ in the first stage and this state would no longer be infeasible. To model this in the BDD, we add a node in the next layer with state $0,2,3,4$, i.e., assuming vertex $1$ is dominated (not necessarily selected) in the first stage, and add an arc for the decision $\yvar_{2} = 0$ to this new node with capacity $\xvar_{0} + \xvar_{1} + \xvar_{2}$. The arc capacity now enforces that if $ \hat{\xvar}_0 = \hat{\xvar}_1 = \hat{\xvar}_2 = 0$, there can be no flow on the arc and the new state is unreachable.
}	 
	  In this example, we can also see that there may be multiple capacity constraints on a single arc, such as in the last arc layer of the BDD. Finally, we remark that since the arcs have different capacities, we are unable to further reduce the BDD.
	\end{example}
	
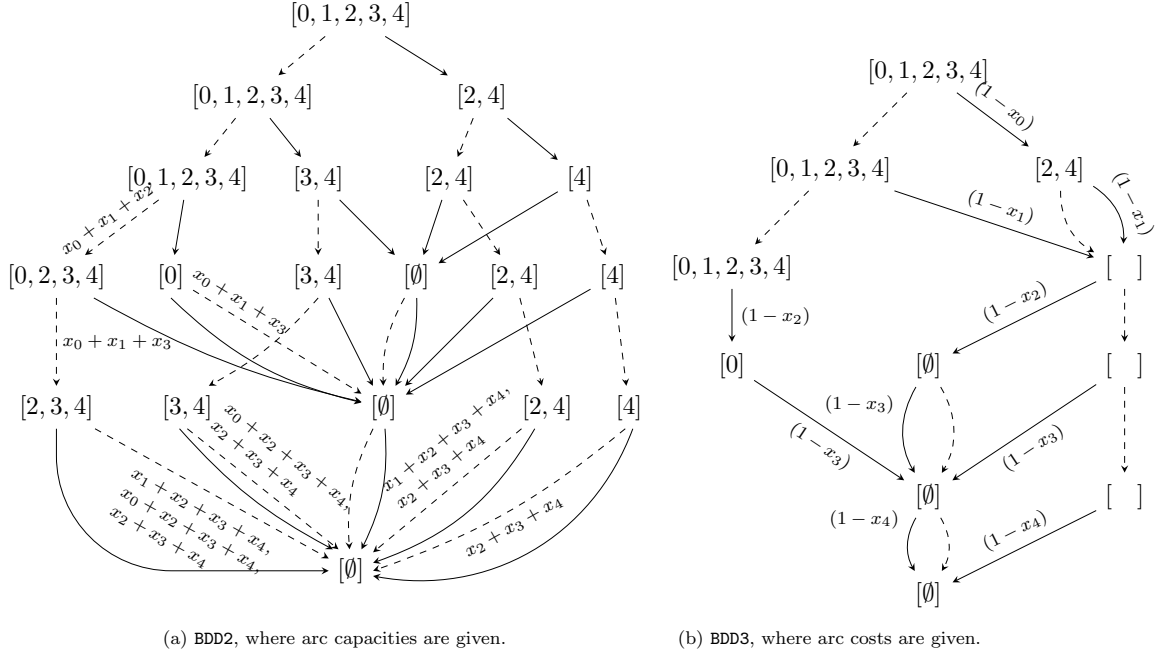
\begin{figure}[t]
	\centering
	\scalebox{0.87}{
	\begin{subfigure}[t]{0.5\textwidth}
\hspace*{-1.5cm}	
\begin{tikzpicture}[	main_node/.style={}, node distance = 1cm and 1cm,  > = stealth, 
		]

		\node[main_node] (s) at  (-0.5,0) {$[0,1,2,3,4]$};
		
		\node[main_node] (l1n1) at (-2,-1.25) {$[0,1,2,3,4]$};
		\node[main_node] (l1n2) at  (1.5,-1.25) {$[2,4]$};
		
		\node[main_node] (l2n1) at (-3,-2.5) {$[0,1,2,3,4]$};
		\node[main_node] (l2n2) at (-1,-2.5) {$[3,4]$};
		\node[main_node] (l2n3) at (1,-2.5) {$[2,4]$};
		\node[main_node] (l2n4) at (3,-2.5) {$[4]$};
		
		\node[main_node] (l3n0) at (-5,-4) {$[0,2,3,4]$};
		\node[main_node] (l3n1) at (-3.25,-4) {$[0]$};
		\node[main_node] (l3n2) at (-1,-4) {$[3,4]$};
		\node[main_node](l3n3) at (0.5,-4) {$[\emptyset]$};
		\node[main_node] (l3n4) at (2,-4) {$[2,4]$};
		\node[main_node] (l3n5) at (3.5,-4) {$[4]$};

		\node[main_node] (l4n0) at (-5,-6) {$[2,3,4]$};
		\node[main_node] (l4n1) at (-3,-6) {$[3,4]$};
		\node[main_node](l4n2) at (0,-6) {$[\emptyset]$};
		\node[main_node] (l4n3) at (2.5,-6) {$[2,4]$};
		\node[main_node] (l4n4) at (3.75,-6) {$[4]$};
		
		\node[main_node](t) at (-.5,-8.5) {$[\emptyset]$};

		\draw[dashed, ->] (s) to (l1n1);
		\draw[->] (s) to (l1n2);
		
		\draw[dashed, ->] (l1n1) to  (l2n1);
		\draw[->] (l1n1) to (l2n2);
		\draw[dashed, ->] (l1n2) to []  (l2n3);
		\draw[->] (l1n2) to []  (l2n4);

		\draw[dashed, ->] (l2n1) to node[sloped, above, xshift=-0.1cm]{\scriptsize $x_0+x_1+x_2$} (l3n0);
		\draw[->] (l2n1) to (l3n1);
		\draw[dashed, ->] (l2n2) to (l3n2);
		\draw[->] (l2n2) to (l3n3);
		\draw[dashed, ->] (l2n3) to (l3n4);
		\draw[->] (l2n3) to (l3n3);
		\draw[dashed, ->] (l2n4) to (l3n5);
		\draw[->] (l2n4) to (l3n3);

		\draw[dashed, ->] (l3n0) to [bend right=0] node[right, xshift =-0.25cm,rotate=0]{\scriptsize \begin{tabular}{l}
				$x_0+x_1+x_3$
			\end{tabular} }  (l4n0);
		\draw[->] (l3n0) to [bend right=8]  (l4n2);
		
		\draw[dashed, ->] (l3n1) to [bend right=0] node[sloped, above, xshift=-.8cm, rotate=0]{\scriptsize  $x_0+x_1+x_3$}  (l4n2);
		\draw[->] (l3n1.south) to [bend right= 15]  (l4n2);
		
		\draw[dashed, ->] (l3n2.south) to  [bend left=10]  (l4n1.45);
		\draw[->] (l3n2) to (l4n2);
		
		\draw[dashed, ->] (l3n3) to [bend right=15]  (l4n2);
		\draw[->] (l3n3) to [bend left=15]  (l4n2);
		
		\draw[dashed, ->] (l3n4) to (l4n3);
		\draw[->] (l3n4) to (l4n2);
		\draw[dashed, ->] (l3n5) to (l4n4);
		\draw[->] (l3n5) to (l4n2);

		\draw[dashed, ->] (l4n0) to [bend right=0] node[sloped,below,rotate= 0 ]{\scriptsize
			\begin{tabular}{l}
				$x_1+x_2+x_3+x_4$,\\$x_0+x_2+x_3+x_4$,\\ $x_2+x_3+x_4$
		\end{tabular}}   (t);
		\draw[rounded corners=50pt,->] (l4n0.south) |- (t.west);

		\draw[dashed, ->] (l4n1.south) to [bend left=0] node[sloped,above, rotate=0 ]{\scriptsize
			\begin{tabular}{l}
				$x_0+x_2+x_3+x_4$,\\ $x_2+x_3+x_4$
		\end{tabular}}   (t.120);
		\draw[->] (l4n1.250) to [bend right=10]  (t.130);
		
		\draw[dashed, ->] (l4n2) to [bend right=15]  (t);
		\draw[->] (l4n2) to [bend left=15]  (t);
		
		\draw[dashed, ->] (l4n3) to [bend right=0] node[sloped, above, rotate= 0, xshift =0.5cm]{\scriptsize \begin{tabular}{l}
				$x_1+x_2+x_3+x_4$,\\ $x_2+x_3+x_4$
		\end{tabular} }   (t);
		\draw[->] (l4n3) to [bend left=20]  (t);
		
		\draw[dashed, ->] (l4n4.260) to [bend left=10] node[sloped, below, rotate=0]{\scriptsize $x_2+x_3+x_4$}   (t.east);

		\draw[->] (l4n4.280) to [bend left=400]  (t);
		
	\end{tikzpicture}
	\caption{\bdd{2}, where arc capacities  are given. \label{fig:smwds_bdd1}}
	\end{subfigure}
~
	\begin{subfigure}[t]{.45\textwidth}
		\centering
	\hspace*{0.7cm}		\begin{tikzpicture}[main_node/.style={},
			node distance = 1cm and 1cm,  > = stealth, 
			shorten > = 1pt, 
			]
			\node[main_node] (s) at  (0,0) {$[0,1,2,3,4]$};
			
			\node[main_node] (l1n1) at (-1.5,-1.5) {$[0,1,2,3,4]$};
			\node[main_node] (l1n2) at  (2,-1.5) {$[2,4]$};
			
			\node[main_node] (l2n1) at (-3,-3) {$[0,1,2,3,4]$};
			\node[main_node] (l2n2) at (3,-3) {$[\quad]$};

			\node[main_node] (l3n1) at (-3,-4.5) {$[0]$};
			\node[main_node](l3n2) at (0,-4.5) {$[\emptyset]$};
			\node[main_node] (l3n3) at (3,-4.5) {$[\quad]$};

			\node[main_node](l4n1) at (0,-6.5) {$[\emptyset]$};
			\node[main_node] (l4n2) at (3,-6.5) {$[\quad]$};

			\node[main_node](t) at (0,-8) {$[\emptyset]$};

			\draw[dashed, ->] (s) to (l1n1);
			\draw[->] (s) to node[sloped, above, rotate=0]{ \scriptsize$(1-\xvar_{0})$}  (l1n2);
			
			\draw[->] (l1n1) to node[sloped, above, rotate= 0]{ \scriptsize$(1-\xvar_{1})$}  (l2n2);
			\draw[dashed, ->] (l1n1) to (l2n1);
			
			\draw[dashed, ->] (l1n2) to [bend right=30]  (l2n2);
			\draw[->] (l1n2) to [bend left=30]  node[sloped, above, rotate= 0]{ \scriptsize$(1-\xvar_{1})$} (l2n2);
			
			\draw[->] (l2n1) to node[right, rotate= 0]{ \scriptsize$(1-\xvar_{2})$}  (l3n1);
			\draw[->] (l2n2) to node[above, sloped,rotate= 0]{ \scriptsize$(1-\xvar_{2})$}  (l3n2);
			\draw[dashed, ->] (l2n2) to (l3n3);

			\draw[->] (l3n1) to node[sloped, below, rotate= 0]{ \scriptsize$(1-\xvar_{3})$} (l4n1);
			\draw[dashed, ->] (l3n2) to [bend left=30]  (l4n1);
			\draw[->] (l3n2) to [bend right=30]  node[left, rotate= 0, yshift =0.4cm]{ \scriptsize$(1-\xvar_{3})$}  (l4n1);
			\draw[dashed, ->] (l3n3) to  (l4n2);
			\draw[->] (l3n3) to node[sloped, below, rotate= 0]{ \scriptsize$(1-\xvar_{3})$}(l4n1);

			\draw[dashed, ->] (l4n1) to [bend left=30]  (t);
			\draw[->] (l4n1) to [bend right=30] node[left, rotate= 0, yshift =0.4cm]{ \scriptsize$(1-\xvar_{4})$}    (t);		
			\draw[->] (l4n2) to [] node[above, sloped, rotate= 0]{ \scriptsize$(1-\xvar_{4})$}   (t);

		\end{tikzpicture}
		\caption{\bdd{3}, where arc costs are given. \label{fig:smwds_bdd2}}
	\end{subfigure}
	}
	\caption{\bdd{2} and \bdd{3} {\credrev (reduced)} reformulations for the instance in Figure \ref{fig:smwds_graph}. {\credrev Node states indicate the vertices left to dominate in the problem. State $[\emptyset]$ indicates all vertices have been  dominated, whereas state $[\quad ]$ indicates that multiple nodes have been merged during the reduction of the BDD}. \label{fig:bdd_eg}}
\end{figure}%

\subsubsection{\bdd{2} Recourse Problem Reformulation}

Given a scenario $\omega$  and a first-stage solution $\mpsol^\omega$, the \bdd{2}  reformulation of \eqref{eq:recourse2}-\eqref{eq:sp2_yconstr} yields a minimum cost network flow problem $\Primal{2}$ with capacity constraints \eqref{eq:capconstr} on some arcs of the BDD. In the linear program (LP)  \eqref{form:BDD1_primal} we have continuous decision variables $f_a$ that give the flow over arc $a \in \arcs^\omega$ and we define the set of indices $\viol_a$ for an arc $a \in \arcs^\omega$ such that there can be no flow on $a$ unless $\mpsol^\omega_i = 0$ for all $ i \in \viol_a$.
\bsubeq\label{form:BDD1_primal}
\begin{alignat}{2}
	\Primal{2}: \min_{f \in \R_+^{|\arcs^\omega |}} \ & \sum_{a  \in \onearcs} f_a\secondstagecost{\map(a)} \\
		\text{s.t.} \ & \text{valid $\rootnode$-$\term$ flow}\\
	& f_a \leq 1 - \mpsol^\omega_i && \forall \ a \in \arcs^\omega , i \in \viol_a \label{eq:capconstr}
\end{alignat}
\esubeq
To derive traditional Benders cuts from this problem we take the dual of the modified minimum cost network flow problem, introducing variables $\pi$ and $\beta$ for the flow conservation and capacity constraints respectively. The complete formulations can be found in \ref{app:bddforms}.

Without loss of generality, we assume $\pi_\term =0$ as the constraint of the primal associated with this variable is linearly dependant on the other constraints.
Given a first-stage solution $\mpsol^\omega$ and a scenario $\omega$,  	$\Primal{2}$ is equivalent to \eqref{eq:recourse2}-\eqref{eq:sp2_yconstr} since
each $\rootnode$-$\term$ path in the BDD corresponds to exactly one feasible solution of  the recourse problem with equal objective function, and vice versa.

\subsection{\bdd{3} Reformulation}

The second BDD reformulation builds BDDs where the first-stage solutions impact the arc weights and allows for a convex representation of subproblem  \eqref{eq:altprob_obj}-\eqref{eq:sp1_yconstr}.

\subsubsection{\bdd{3} Representation}

The procedure to create a BDD of type \bdd{3} is given in Algorithm \ref{alg:bdd2}. The only difference from the creation of a standard BDD for the given transition function is that when we create an arc corresponding to a decision assigning $\yvar_{\map(a)} =1$, we give the arc cost $\costone{\map(a)} (1-\mpsol^\omega_{\map(a)})+ \costtwo{\map(a)}$. This ensures that when a first-stage decision is $\mpsol^\omega_{\map(a)} = 1$, the cost to make decision $y_{\map(a)} $ is $\costtwo{\map(a)}$ otherwise the cost to make decision $y_{\map(a)} $ is $\costone{\map(a)}+\costtwo{\map(a)}$. We remark that all the arcs in this BDD are uncapacitated. 

\begin{algorithm}[htbp]
	\KwIn{Transition functions $\transitionfunc_j$, scenario $\omega$}
	\KwOut{A reduced, exact BDD  \eqref{eq:altprob_obj}-\eqref{eq:sp1_yconstr}}
	Create root node $\rootnode$ with initial state\;
	\ForAll{$j \in \{1,2,\hdots, \yvarindex\}$, $\node \in \nodelayer{j}$}{
		\For{$b \in \{0,1\}$}{
			
			\If{$\transitionfunc_j(\state{\node}, b)$ is feasible}{
				Create node $v \in \nodelayer{{j+1}}$ with state $\transitionfunc_j(\state{\node}, b)$\;
				\eIf{$b = 1$}{
					Add arc $a = (u,v) \in \onearcs$ with cost $\costone{\map(a)} (1-\mpsol^\omega_{\map(a)})+\costtwo{\map(a)}$ to $\arclayer{j}$\;
				}{
					Add arc $(u,v) \in \zeroarcs$ with cost $0$ to $\arclayer{j}$\;
				}
				
			}
		}
	}
	Reduce the resulting BDD 
	\caption{\bdd{3} construction} \label{alg:bdd2}
\end{algorithm}

\begin{example} \label{ex:bdd2}
	Recall the scenario in Figure \ref{fig:smwds_graph}, to construct \bdd{3} we proceed as if we were building a dominating set BDD. Again, the state at each node represents the vertices that still need to be dominated, each dashed arc has a weight of zero, and because each vertex has cost $\costone{i} + \costtwo{i}= 1+ 0$ each solid arc will have a weight of $(1-\hat{\xvar}_i)$, that is, if $\hat{\xvar}_i = 1$, then $\costone{i} = 0$. 
	The reduced \bdd{3} can be seen in Figure \ref{fig:smwds_bdd2}, where nodes that were merged during the reduction have no state shown. We remark that this BDD has fewer nodes and has a smaller width than the BDD created using the  \bdd{2} procedure for the same instance.

\end{example}

\subsubsection{\bdd{3} Recourse Problem Reformulation}

Given a fixed scenario $\omega$ and
a master problem solution $\mpsol^\omega$, the BDD reformulation of \eqref{eq:altprob_obj}-\eqref{eq:sp1_yconstr} yields a minimum cost network flow problem, $\Primal{3}$, with a modified objective function over the BDD.  Again, let continuous decision variables $f_a$ be the flow over arc $a \in \arcs^\omega$.
\bsubeq\label{form:BDD2_primal1}
\begin{alignat}{2}
	\Primal{3}: \min \ & \sum_{a \in \onearcs} f_a\left(\costone{\map(a)} (1-\mpsol^\omega_{\map(a)})+ \costtwo{\map(a)}\right) \\
	\text{s.t.} \ & \text{valid $\rootnode$-$\term$ flow}
\end{alignat}%
\esubeq%
Deriving cuts from this reformulated subproblem is not as straightforward as in the case of $\bdd{2}$.
We next describe in detail how to derive Benders 
cuts from $\Primal{3}$.

\subsubsection{An Alternative Recourse Problem Reformulation for \bdd{3}}
Traditionally, to derive  Benders cuts using the BDD reformulation, we take the dual of \eqref{form:BDD2_primal1}.
%
However, Benders cuts cannot immediately be taken from this dual problem since the first-stage variables appear in the constraints, not the objective. We will reformulate the primal problem into $\AltPrimal{3}$ to ensure the first-stage variables appear only in the objective of the dual subproblem and show at an optimal solution these problems are equivalent in the sense that they achieve the same objective value for the same master problem solution.
Introducing a new set of arc variables, $\gamma$, we propose the following extended minimum cost network flow formulation:
	\bsubeq\label{form:BDD2_primal2}
\begin{alignat}{2}
	\AltPrimal{3}: 	\min \ & \sum_{a \in \onearcs}  \left(\costone{\map(a)} + \costtwo{\map(a)}\right)f_a + \sum_{a \in \onearcs} \costtwo{\map(a)}\gamma_a\\
	\text{s.t.} \ & \sum_{a=(\rootnode,j) \in \arcs^\omega} f_a + \sum_{a =(\rootnode,j) \in \onearcs}  \gamma_a  = 1 \label{eq:P2_flow1} \\ 
	&\sum_{a =(i,\term) \in \arcs^\omega} f_a + \sum_{a =(i,\term) \in \onearcs} \gamma_a  = 1 \\ 
	& \sum_{\substack{a =(i,j)\\ \in \arcs^\omega}} f_a + \sum_{\substack{a =(i,j)\\ \in \onearcs}}\gamma_a =   \sum_{\substack{a =(j,i)\\ \in \arcs^\omega}} f_a + \sum_{\substack{a =(j,i)\\ \in \onearcs}} \gamma_a && \forall i \in \nodes \label{eq:P2_flow2}\\
	& 0 \leq \gamma_a  \leq \mpsol^\omega_{\map(a)} && \hspace*{-0.04cm} \forall a \in \onearcs \label{eq:P2_gammaUB} \\
	& f_a \geq 0 \quad && \forall a\in \arcs^\omega \label{eq:P2_fLB} 
\end{alignat}
\esubeq%

\begin{proposition} \label{lemma:form_equiv}
	Given  a scenario $\omega$ and binary first-stage decisions $\mpsol^\omega$,  formulations $\Primal{3}$ and $\AltPrimal{3}$ have the same optimal objective value. 
\end{proposition}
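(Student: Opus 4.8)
The plan is to collapse both network-flow formulations onto a single combined flow and show they carry the same cost. Given any feasible $(f,\gamma)$ for $\AltPrimal{3}$, I would define a combined flow by $h_a = f_a + \gamma_a$ for $a \in \onearcs$ and $h_a = f_a$ for $a \in \zeroarcs$. Constraints \eqref{eq:P2_flow1}--\eqref{eq:P2_flow2} are then exactly the statement that $h$ is a valid unit $\rootnode$-$\term$ flow, i.e.\ $h$ is feasible for $\Primal{3}$. Because the BDD is a directed acyclic graph, any such unit flow decomposes into $\rootnode$-$\term$ path flows of total weight one, so $h_a \le 1$ on every arc. I want to flag up front that this bound is the linchpin of the whole argument.

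Next I would substitute $f_a = h_a - \gamma_a$ into the $\AltPrimal{3}$ objective. A one-line computation collapses the two terms on each one-arc, so that the objective becomes $\sum_{a \in \onearcs}\bigl[(\costone{\map(a)}+\costtwo{\map(a)})\,h_a - \costone{\map(a)}\,\gamma_a\bigr]$. Since we assume $\costone{} \ge 0$, minimizing drives each $\gamma_a$ to its upper limit, and the feasibility constraints $\gamma_a \le \mpsol^\omega_{\map(a)}$ together with $f_a \ge 0$ (equivalently $\gamma_a \le h_a$) give the optimal choice $\gamma_a = \min\{h_a,\,\mpsol^\omega_{\map(a)}\}$. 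Here I use that $\mpsol^\omega$ is binary and $h_a \le 1$ to rewrite
\[
\min\{h_a,\,\mpsol^\omega_{\map(a)}\} = \mpsol^\omega_{\map(a)}\,h_a,
\]
so the per-arc contribution reduces to $\bigl(\costone{\map(a)}(1-\mpsol^\omega_{\map(a)})+\costtwo{\map(a)}\bigr)h_a$, which is precisely the $\Primal{3}$ cost of the flow $h$.

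From this identity both inequalities follow. For the ``$\ge$'' direction, any feasible $(f,\gamma)$ satisfies $\gamma_a \le \min\{h_a,\mpsol^\omega_{\map(a)}\}$, so its $\AltPrimal{3}$ objective is at least the $\Primal{3}$ cost of the associated $h$, which is at least the optimum of $\Primal{3}$; hence the optimal value of $\AltPrimal{3}$ is no smaller than that of $\Primal{3}$. For the ``$\le$'' direction, I would take an optimal flow $h^\star$ of $\Primal{3}$ and set $\gamma_a = \mpsol^\omega_{\map(a)}\,h^\star_a$ with $f_a = h^\star_a - \gamma_a$ on one-arcs and $f_a = h^\star_a$ on zero-arcs; using $h^\star_a \le 1$ and the binariness of $\mpsol^\omega$ one checks $0 \le \gamma_a \le \mpsol^\omega_{\map(a)}$ and $f_a \ge 0$, that the conservation constraints hold by construction, and that the $\AltPrimal{3}$ objective of this point equals the $\Primal{3}$ optimum. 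Combining the two inequalities yields equality of the optimal values.

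The main obstacle, as noted, is establishing $h_a \le 1$ on every arc: without it the key substitution $\min\{h_a,\mpsol^\omega_{\map(a)}\} = \mpsol^\omega_{\map(a)}\,h_a$ breaks down and the two objectives need not coincide. This is exactly the point where acyclicity of the BDD and the unit flow value are used, through the path-flow decomposition.
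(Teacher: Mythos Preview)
Your proof is correct and uses essentially the same construction as the paper: collapse the one-arc variables into a single flow $h_a = f_a + \gamma_a$ (the paper's $f^1_a = \hat f^2_a + \hat\gamma^2_a$) and, in the reverse direction, split an optimal $\Primal{3}$ flow via $\gamma_a = \mpsol^\omega_{\map(a)}\,h^\star_a$, $f_a = h^\star_a - \gamma_a$ (the paper's $\gamma^2_a = \hat f^1_a\,\mpsol^\omega_{\map(a)}$). The only cosmetic difference is that you handle both binary cases at once through the identity $\min\{h_a,\mpsol^\omega_{\map(a)}\} = \mpsol^\omega_{\map(a)}\,h_a$, whereas the paper argues by explicit case analysis on $\mpsol^\omega_{\map(a)}\in\{0,1\}$.
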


We remark that Proposition \ref{lemma:form_equiv} relies on binary $\mpsol^\omega$ and does not necessarily hold for fractional values of  $\mpsol^\omega$. The proof of Proposition \ref{lemma:form_equiv}, not only shows the equality of the objective functions of $\Primal{3}$ and $\AltPrimal{3}$ at optimality but gives a transformation between the solutions of these formulations. Thus given an optimal solution to one formulation, we can construct an optimal solution to the other. 

By Proposition \ref{lemma:form_equiv}, we can derive Benders cuts from the dual of \eqref{form:BDD2_primal2}:
\bsubeq\label{form:BDD2_dual2}
\begin{alignat}{2}
	\AltDual{3}: 	\max \ & \pi_\rootnode  - \pi_\term  - \sum_{a  \in \onearcs} z_a\mpsol^\omega_{\map(a)}  \\
	\text{s.t.} \ & \pi_i - \pi_j \leq 0 && \forall (i,j) \in \zeroarcs \\ 
	&  \pi_i - \pi_j \leq \costone{\map(a)} + \costtwo{\map(a)} \quad && \forall a =(i,j) \in \onearcs \label{eq:D2_piUB} \\
	&  \pi_i - \pi_j \leq z_a + \costtwo{\map(a)}\quad && \forall a =(i,j) \in \onearcs \label{eq:D2_pizUB} \\
	& z_a \geq 0 \quad && \forall a =(i,j) \in \onearcs \label{eq:D2_zbd} 
\end{alignat}%
\esubeq%
Again, without loss of generality, we assume $\pi_\term =0$ as the constraint of the primal associated with this variable is linearly dependent on the other constraints. 

\begin{proposition}
	Given a scenario $\omega$ and a binary first-stage solution $\mpsol^\omega$, $\AltPrimal{3}$ and \eqref{eq:altprob_obj}-\eqref{eq:sp1_yconstr} have the same optimal objective value.
\end{proposition}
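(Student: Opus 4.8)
The plan is to prove the claim by transitivity through the intermediate formulation $\Primal{3}$. Since Proposition~\ref{lemma:form_equiv} already establishes that $\Primal{3}$ and $\AltPrimal{3}$ attain the same optimal objective value at any binary $\mpsol^\omega$, it suffices to show that $\Primal{3}$ and the recourse problem \eqref{eq:altprob_obj}--\eqref{eq:sp1_yconstr} share an optimal value. I would organize the argument around the bijection between $\rootnode$-$\term$ paths of the BDD $\bdd{3}$ and feasible second-stage vectors $\yvar \in \mathcal{Y}(\omega)$.

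First I would recall that, by construction in Algorithm~\ref{alg:bdd2}, $\bdd{3}$ is an exact diagram: every $\rootnode$-$\term$ path encodes a distinct $\yvar \in \mathcal{Y}(\omega)$ through the arc-to-variable map $\map$, and conversely every feasible $\yvar$ corresponds to exactly one such path. Next, because $\bdd{3}$ is a directed acyclic graph with a single source $\rootnode$ and a single sink $\term$, the polytope defined by the ``valid $\rootnode$-$\term$ flow'' constraints of $\Primal{3}$ is integral and its vertices are precisely the incidence vectors of $\rootnode$-$\term$ paths. Consequently the linear program $\Primal{3}$ attains its optimum at a path, so its optimal value equals the minimum path cost over $\bdd{3}$.

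It then remains to check that the path cost and the objective of \eqref{eq:altprob_obj}--\eqref{eq:sp1_yconstr} agree at a fixed binary $\mpsol^\omega$. For the path associated with $\yvar$, only arcs in $\onearcs$ with $\yvar_{\map(a)}=1$ contribute, so the path cost equals $\sum_{j:\,\yvar_j=1}\bigl(\costone{j}(1-\mpsol^\omega_{j})+\costtwo{j}\bigr)$. On the other side, since $\mpsol^\omega_i=\indicator(\cond)$ and the logical constraint \eqref{eq:sp1_logical} forces $\costone{\costindex}=0$ exactly when that indicator equals $1$, the effective cost coefficient of $\yvar_j$ in \eqref{eq:altprob_obj} likewise reduces to $\costone{j}(1-\mpsol^\omega_{j})+\costtwo{j}$. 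Matching these expressions term by term, together with the path bijection, shows the two minimizations range over corresponding solutions with equal objective values; chaining this with Proposition~\ref{lemma:form_equiv} completes the proof.

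I expect the main obstacle to be the bookkeeping around indexing: the parametrization in $\Primal{3}$ is written in terms of $\mpsol^\omega_{\map(a)}$, whereas the logical constraint \eqref{eq:sp1_logical} acts on the coefficient indexed by $\costindex$, so I would need to argue carefully that the arc parametrization of $\bdd{3}$ installs the discount on exactly the coefficient the logical expression is meant to zero out. A secondary point needing care is the attainment claim for the flow LP; this is standard for a single-commodity unit flow on a DAG, but I would state explicitly that, because the objective is a nonnegative combination of path costs, the continuous optimum coincides with the best $\rootnode$-$\term$ path rather than merely furnishing a relaxation bound.
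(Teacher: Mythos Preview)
Your proposal is correct and follows essentially the same route as the paper: establish that $\Primal{3}$ equals the recourse problem via the bijection between $\rootnode$-$\term$ paths in $\bdd{3}$ and feasible $\yvar\in\mathcal{Y}(\omega)$ with matching objective values, then invoke Proposition~\ref{lemma:form_equiv} and conclude by transitivity. Your added detail on LP integrality of the unit-flow polytope and the explicit cost-matching check are sound refinements, and your indexing concern is well placed but does not alter the argument's structure.
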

%

\section{Benders Cuts}\label{sec:benders}

In this section, we will explain how to derive cuts and then strengthen the cuts derived from both BDD-based methods.
We then provide details of the integer L-shaped method, which is a well-known method in the stochastic programming literature and will serve as a point of comparison to the BDD-based decompositions. Next, we describe cuts based on the LP relaxation of the subproblem that can be added to any formulation. Finally, we compare the strength of the presented cuts.

\subsection{BDD-based Benders Cuts} \label{subsec:bddbenderscuts}

For a fixed scenario $\omega$ the Benders cuts that come from \bdd{2} are:
\begin{equation}  \label{eq:bdd1_cuts}
	\eta_\omega \geq \hat{\pi}_\rootnode - \sum_{a  \in \arcs^\omega} \sum_{i \in \viol_a} \hat{\beta}_{ai}(1 - \indvar_i).
\end{equation} 
For a scenario BDD $\BDD^\omega$ and master problem solution $\mpsol^\omega$ we calculate the dual solutions using a bottom-up shortest-path algorithm, as described in \citep{lozano2018bdd}. The terminal has dual value $\pi_\term = 0$ and each subsequent BDD node $\node$ has dual value $\pi_\node$, the length of the shortest path from $\node$ to $\term$. The $\beta_{ai}$ variables are only on those arcs with capacities, for such an arc $a = (u,v)$ if $\pi_u - \pi_v$ is shorter than the arc length, that is, the shortest path does not lie on $a$, then $\beta_{ai} = 0$ for all $i \in \viol_a$. 
Otherwise,  if $a \in \zeroarcs$ we have $\pi_u - \pi_v > 0$, so for  some arbitrary $\bar{i} \in \viol_a$ where $\mpsol^\omega_{\bar{i} }= 1$ we set $\beta_{a\bar{i}} = \pi_u - \pi_v $ and $\beta_{ai} = 0$ for all $i \in \viol_a$ where $i \neq \bar{i} $. Similarly, if $a \in \onearcs$ we have $\pi_u - \pi_v - \secondstagecost{\map(a)}> 0$, then we set $\beta_{a\bar{i}} = \pi_u- \pi_v -  \secondstagecost{\map(a)}$ for an arbitrary $\bar{i} \in \viol_a$ where $\mpsol^\omega_{\bar{i}}= 1$, and set  $\beta_{ai} = 0$ for all $i \in \viol_a$ where $i \neq \bar{i} $. These dual values can be computed in linear time with respect to the number of arcs in the BDD.

\begin{example} \label{ex:bdd1_cuts}
	
	Recall, in Example \ref{ex:bdd1}, we built the BDD for the scenario graph in Figure \ref{fig:smwds_graph}, we will now use this BDD to compute a cut for a master problem solution $\hat{\xvar} = [0,0,1,0,0]$. This solution has $\xvar_2 = 1$, thus any arc where $\xvar_2$ appears in all the capacity expressions will not be interdicted. 
	In Figure \ref{fig:bdd1_cuts} those arcs which still have capacity $0$ are highlighted. We are then able to compute the dual values $\pi$ using the bottom up shortest path algorithm and obtain $\pi_\rootnode =1$. The dual values of each node are given inside the node in Figure \ref{fig:bdd1_cuts}. 
	Finally, to derive the cut we must compute the 
	values of $\beta_{ai}$. We observe that to have $\beta_{ai} \neq 0$ we must have $\mpsol^\omega_i = 1$, in our SMWDS example this means that the arc capacity is $0$, since the indicator is $0$ if neither a vertex nor its neighbours are selected.  
	Thus we consider only the highlighted arcs in Figure \ref{fig:bdd1_cuts}, which we will denote from left to right $a_1$ and $a_2$. For both arcs we have $\pi_u - \pi_v = 1 > 0$, and we only have one indicator expression $x_0 + x_1+x_3$ on each arc, so we will ignore the indices $i \in \viol_a$ for ease of presentation. We can set $\hat{\beta}_{a_1i} = \hat{\beta}_{a_2i} = \pi_u - \pi_v =1$ and $\hat{\beta}_{ai} = 0$ for all other capacitated arcs $a$ and sets $\viol_a$. We have now obtained the following cut	which will be added to $\MP$:
	\begin{equation} \label{eq:bdd1_cut_ex}
		\eta \geq 1 - 2(\xvar_{0} + \xvar_{1} + \xvar_{3} )
	\end{equation}
\end{example}

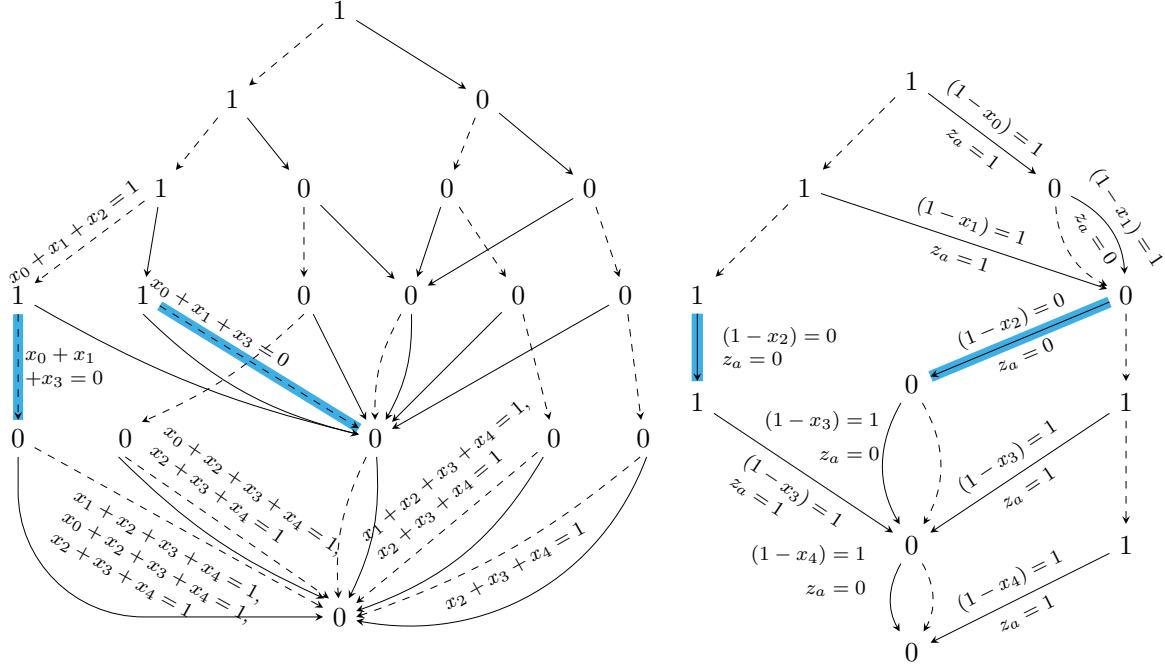
\begin{figure}[h]
\scalebox{0.95}{
	\begin{subfigure}[t]{0.5\textwidth}
		\centering
		\hspace*{-0.4cm}		
		\begin{tikzpicture}[	main_node/.style={}, node distance = 1cm and 1cm,  > = stealth, 
			]
			
			\node[main_node] (s) at  (-0.5,0) {$1$};
			
			\node[main_node] (l1n1) at (-2,-1.25) {$1$};
			\node[main_node] (l1n2) at  (1.5,-1.25) {$0$};
			
			\node[main_node] (l2n1) at (-3,-2.5) {$1$};
			\node[main_node] (l2n2) at (-1,-2.5) {$0$};
			\node[main_node] (l2n3) at (1,-2.5) {$0$};
			\node[main_node] (l2n4) at (3,-2.5) {$0$};
			
			\node[main_node] (l3n0) at (-5,-4) {$1$};
			\node[main_node] (l3n1) at (-3.25,-4) {$1$};
			\node[main_node] (l3n2) at (-1,-4) {$0$};
			\node[main_node](l3n3) at (0.5,-4) {$0$};
			\node[main_node] (l3n4) at (2,-4) {$0$};
			\node[main_node] (l3n5) at (3.5,-4) {$0$};

			\node[main_node] (l4n0) at (-5,-6) {$0$};
			\node[main_node] (l4n1) at (-3.5,-6) {$0$};
			\node[main_node](l4n2) at (0,-6) {$0$};
			\node[main_node] (l4n3) at (2.5,-6) {$0$};
			\node[main_node] (l4n4) at (3.75,-6) {$0$};
			
			\node[main_node](t) at (-.5,-8.5) {$0$};

			\draw[dashed, ->] (s) to (l1n1);
			\draw[->] (s) to (l1n2);
			
			\draw[dashed, ->] (l1n1) to  (l2n1);
			\draw[->] (l1n1) to (l2n2);
			\draw[dashed, ->] (l1n2) to []  (l2n3);
			\draw[->] (l1n2) to []  (l2n4);

			\draw[dashed, ->] (l2n1) to node[sloped, above, xshift=-0.1cm]{\scriptsize $x_0+x_1+x_2=1$} (l3n0);
			\draw[->] (l2n1) to (l3n1);
			\draw[dashed, ->] (l2n2) to (l3n2);
			\draw[->] (l2n2) to (l3n3);
			\draw[dashed, ->] (l2n3) to (l3n4);
			\draw[->] (l2n3) to (l3n3);
			\draw[dashed, ->] (l2n4) to (l3n5);
			\draw[->] (l2n4) to (l3n3);
			
			\draw[-, line width=4pt, color=CornflowerBlue] (l3n0) to (l4n0);
			\draw[dashed, ->] (l3n0) to [bend right=0] node[right, xshift =-0.25cm,rotate=0]{\scriptsize \begin{tabular}{l}
					$x_0+x_1$\\
					$+x_3=0$
			\end{tabular} }  (l4n0);
			\draw[->] (l3n0) to [bend right=8]  (l4n2);

			\draw[-, line width=4pt, color=CornflowerBlue] (l3n1) to [bend right=0]  (l4n2);
			\draw[dashed, ->] (l3n1) to [bend right=0] node[sloped, above, xshift=-.8cm, rotate=0]{\scriptsize  $x_0+x_1+x_3=0$}  (l4n2);
			\draw[->] (l3n1.south) to [bend right= 15]  (l4n2);
			
			\draw[dashed, ->] (l3n2.south) to  [bend left=10]  (l4n1.45);
			\draw[->] (l3n2) to (l4n2);
			
			\draw[dashed, ->] (l3n3) to [bend right=15]  (l4n2);
			\draw[->] (l3n3) to [bend left=15]  (l4n2);
			
			\draw[dashed, ->] (l3n4) to (l4n3);
			\draw[->] (l3n4) to (l4n2);
			\draw[dashed, ->] (l3n5) to (l4n4);
			\draw[->] (l3n5) to (l4n2);

			\draw[dashed, ->] (l4n0) to [bend right=0] node[sloped,below,rotate= 0 ]{\scriptsize
				\begin{tabular}{l}
					$x_1+x_2+x_3+x_4=1$,\\$x_0+x_2+x_3+x_4=1$,\\ $x_2+x_3+x_4=1$
			\end{tabular}}   (t);
			\draw[rounded corners=50pt,->] (l4n0.south) |- (t.west);

			\draw[dashed, ->] (l4n1.south) to [bend left=0] node[sloped,above, rotate=0 ]{\scriptsize
				\begin{tabular}{l}
					$x_0+x_2+x_3+x_4=1$,\\ $x_2+x_3+x_4=1$
			\end{tabular}}   (t.120);
			\draw[->] (l4n1.250) to [bend right=10]  (t.130);
			
			\draw[dashed, ->] (l4n2) to [bend right=15]  (t);
			\draw[->] (l4n2) to [bend left=15]  (t);
			
			\draw[dashed, ->] (l4n3) to [bend right=0] node[sloped, above, rotate= 0, xshift =0.5cm]{\scriptsize \begin{tabular}{l}
					$x_1+x_2+x_3+x_4=1$,\\ $x_2+x_3+x_4=1$
			\end{tabular} }   (t);
			\draw[->] (l4n3) to [bend left=20]  (t);
			
			\draw[dashed, ->] (l4n4.260) to [bend left=10] node[sloped, below, rotate=0]{\scriptsize $x_2+x_3+x_4=1$}   (t.east);
			
			\draw[->] (l4n4.280) to [bend left=400]  (t);
			
		\end{tikzpicture}
		\caption{ \bdd{2} subproblem solution for the SMWDS instance in Example \ref{ex:bdd1_cuts} at  \MP \ solution $\hat{\xvar} = [0,0,1,0,0]$. The nodes contain their dual values, and the highlighted arcs have capacity $0$. \label{fig:bdd1_cuts}}
	\end{subfigure}
	~
	\begin{subfigure}[t]{0.5\textwidth}
		\centering
		\hspace*{1.3cm}		
		\begin{tikzpicture}[main_node/.style={},
			node distance = 1cm and 1cm,  > = stealth, 
			shorten > = 1pt, 
			]
			
			\node[main_node] (s) at  (0,0) {$1$};
			
			\node[main_node] (l1n1) at (-1.5,-1.5) {$1$};
			\node[main_node] (l1n2) at  (2,-1.5) {$0$};
			
			\node[main_node] (l2n1) at (-3,-3) {$1$};
			\node[main_node] (l2n2) at (3,-3) {$0$};

			\node[main_node] (l3n1) at (-3,-4.5) {$1$};
			\node[main_node](l3n2) at (0,-4.25) {$0$};
			\node[main_node] (l3n3) at (3,-4.5) {$1$};

			\node[main_node](l4n1) at (0,-6.5) {$0$};
			\node[main_node] (l4n2) at (3,-6.5) {$1$};

			\node[main_node](t) at (0,-8) {$0$};

			\draw[dashed, ->] (s) to (l1n1);
			\draw[->] (s) to node[sloped, above, rotate=0]{ \scriptsize$(1-\xvar_{0})=1$} node[sloped, below, rotate=0]{ \scriptsize$z_a =1$}  (l1n2);
			
			\draw[->] (l1n1) to node[sloped, above, rotate= 0]{ \scriptsize$(1-\xvar_{1})=1$}  node[sloped, below, rotate=0]{ \scriptsize$z_a =1$}  (l2n2);
			\draw[dashed, ->] (l1n1) to (l2n1);
			
			\draw[dashed, ->] (l1n2) to [bend right=30]  (l2n2);
			\draw[->] (l1n2) to [bend left=30]  node[sloped, above, rotate= 0]{ \scriptsize$(1-\xvar_{1})=1$} node[sloped, below, rotate=0]{ \scriptsize$z_a =0$}(l2n2);

			\draw[-, line width=4.5pt, color=CornflowerBlue ] (l2n1) to (l3n1);
			\draw[->] (l2n1) to node[right, rotate= 0]{ \scriptsize\begin{tabular}{l}
			$(1-\xvar_{2})=0$\\
				$z_a =0$
				\end{tabular}
				}  (l3n1);
			\draw[-, line width=4.5pt, color=CornflowerBlue ] (l2n2) to (l3n2);
			\draw[->] (l2n2) to node[above, sloped,rotate= 0]{ \scriptsize$(1-\xvar_{2})=0$}  node[sloped, below, rotate=0]{ \scriptsize$z_a =0$}(l3n2);
			\draw[dashed, ->] (l2n2) to (l3n3);

			\draw[->] (l3n1) to node[sloped, below, rotate= 0]{ \scriptsize \begin{tabular}{l}
					$(1-\xvar_{3})=1$\\ 
					$z_a =1$
				\end{tabular}	} (l4n1);			
			\draw[dashed, ->] (l3n2) to [bend left=30]  (l4n1);
			\draw[->] (l3n2) to [bend right=30]  node[left, rotate= 0, yshift =0.4cm, xshift=0.3cm]{ \begin{tabular}{r}
					\scriptsize$(1-\xvar_{3})=1$ \\
					\scriptsize$z_a =0$
			\end{tabular}}   (l4n1);
			\draw[dashed, ->] (l3n3) to  (l4n2);
			\draw[->] (l3n3) to node[sloped, above, rotate= 0]{ \scriptsize$(1-\xvar_{3})=1$}node[sloped, below, rotate=0]{ \scriptsize$z_a =1$}  (l4n1);

			\draw[dashed, ->] (l4n1) to [bend left=30]  (t);
			\draw[->] (l4n1) to [bend right=30] node[left, rotate= 0, yshift =0.4cm]{
				\begin{tabular}{r}
				\scriptsize$(1-\xvar_{4})=1$\\
					\scriptsize$z_a =0$
				\end{tabular} }    (t);		
			\draw[->] (l4n2) to [] node[above, sloped, rotate= 0]{ \scriptsize$(1-\xvar_{4})=1$} node[sloped, below, rotate=0]{ \scriptsize$z_a =1$}   (t);

		\end{tikzpicture}
		\caption{\bdd{3} subproblem solution for the SMWDS instance in Example \ref{ex:bdd2_cuts} at \MP \ solution $\hat{\xvar} = [0,0,1,0,0]$. The nodes contain their dual values, and the highlighted 1-arcs have cost $0$.  \label{fig:bdd2_cuts}}
	\end{subfigure}
	}
	\caption{BDD subproblem solutions for the SMWDS instance in Example \ref{ex:bdd1_cuts}  and in Example \ref{ex:bdd2_cuts}.}
\end{figure}

For a fixed scenario $\omega$ the Benders cuts that come from \bdd{3} are:
\begin{equation} \label{eq:bdd2_cuts}
	\eta_\omega \geq \hat{\pi}_\rootnode   - \sum_{a  \in \onearcs} \hat{z}_a\indvar_{\map(a)} .
\end{equation} 
The procedure to obtain the duals for the \bdd{3} subproblems is very similar to that for  the \bdd{2} subproblems. For a scenario BDD $\BDD^\omega$ and master problem solution $\mpsol^\omega$, we calculate the dual solutions using a bottom-up shortest-path algorithm, starting at the terminal node which has $\pi_\term = 0$. For each subsequent node $\node$, $\pi_\node$ is the length of the shortest $\node$-$\term$ path. 
As seen in \eqref{form:BDD2_dual2}, the $z_a$ variables correspond only to arcs in $\onearcs$. By constraints \eqref{eq:D2_pizUB} and $\eqref{eq:D2_zbd}$, if $\pi_u - \pi_v - \costtwo{\map(a)} \leq 0 $, 
we can set $z_a = 0$. Otherwise, we set $ z_a = \pi_u - \pi_v -\costtwo{\map(a)} $. In this way, $z_a + \costtwo{\map(a)} $ can be seen as the change in path length between $u$ and $v$.

\begin{example} \label{ex:bdd2_cuts}
	We will compute a cut at master problem solution $\hat{\xvar} = [0,0,1,0,0]$ for the BDD built in Example \ref{ex:bdd2}. As seen by the highlighted arcs in Figure \ref{fig:bdd2_cuts}, at this solution we have two arcs in $\onearcs$ that now have cost $0$. Using the parametrized arc costs we compute the shortest path dual values $\pi$ which can be seen inside the nodes in Figure  \ref{fig:bdd2_cuts}. To find the values of the $z_a$ variables, we will compute $\pi_u -\pi_v - \costtwo{\map(a)}$ for all $a = (u,v) \in \onearcs$ and set $z_a$ accordingly. We remark that for this instance we have $\costtwo{\map(a)} = 0$ for all arcs $a$. These values can be seen next to the arcs in Figure \ref{fig:bdd2_cuts}. 
	Taking $\xvar_{\map(a)}$ as the indicator we now have the cut:
	\begin{equation} \label{eq:bdd2_cut_ex}
		\eta \geq 1 - (\xvar_{0} + \xvar_{1} + 2\xvar_{3} + \xvar_{4}).
	\end{equation}
	We remark that this cut differs from the cut found in Example \ref{ex:bdd1_cuts} despite being derived for the same subproblem instance at the same master problem solution.
\end{example}

We remark that we are able to apply the cut strengthening strategies proposed by  \cite{lozano2018bdd} to both cuts  \eqref{eq:bdd1_cuts} and \eqref{eq:bdd2_cuts}. The first strategy relies on partitioning $\arcs^\omega$ into disjoint sets of arcs and selecting the maximal value of $\hat{\beta}_{ai}$ or $\hat{z}_a$ from each set to appear in the cut.
First remark that the BDD arc layers,  $\arclayer{j}$ for all $j \in \{1, \hdots, \yvarindex\}$ are disjoint and partition $\arcs^\omega$, and  that for each $a \in \arcs$ we must have $\viol_a \subseteq  \{1,\hdots, \yvarindex\}$. 
In \bdd{2}, for all $j \in \{1, \hdots, \yvarindex\}$ and  $i \in \{1, \hdots, m_1\}$ let $\betamax_{ji} =  \max_{a \in \arclayer{j}}\{\hat{\beta}_{ai} : i \in \viol_a\}$. 
Similarly, in \bdd{3}, remark that because we have assumed the natural ordering of the variables, for each arc $a$ in a BDD arc layer  $\arclayer{j}$, we have $\map(a) = j $ for all $j \in \{1, \hdots, \yvarindex\}$. In \bdd{3}, for all $j \in \{1, \hdots, \yvarindex\}$ define $\zmax_{j} = \max_{a \in \arclayer{j}}\{\hat{z}_{a}\}$, {\cred we remark that $m_1 = \yvarindex$ for \bdd{3}, since every logical expression modifies one second-stage cost coefficient.}
Then the cut
\begin{equation}  \label{eq:bdd1_cut_stren}
	\eta_\omega \geq \hat{\pi}_\rootnode - \sum_{j \in \{1, \hdots, \yvarindex\}} \sum_{i \in \{1, \hdots, m_1\}} \betamax_{ji} (1 - \indvar_i)
\end{equation} 
is at least as strong as \eqref{eq:bdd1_cuts} and the cut
\begin{equation} \label{eq:bdd2_cut_stren}
	\eta_\omega \geq \hat{\pi}_\rootnode   - \sum_{j \in \{1, \hdots, m_1\}} \zmax_j \indvar_{j} .
\end{equation} 
is at least as strong as \eqref{eq:bdd2_cuts}. Henceforth we assume the use of these strengthened cuts for \bdd{2} and \bdd{3} unless otherwise stated. 

The second strengthening strategy solves a secondary optimization problem to help compute lower bounds on the second-stage objective function which are then used to strengthen the cut. We refer the reader to  \citep{lozano2018bdd} for further details on the cut strengthening strategies.

\begin{example}\label{ex:str_cuts}
	For the scenario in Figure \ref{fig:smwds_graph}, the strengthened version of   \eqref{eq:bdd1_cut_ex}  is 
	\begin{equation} \label{eq:bdd1_cut_ex_str}
		\eta \geq 1 - (\xvar_{0} + \xvar_{1} + \xvar_{3} )
	\end{equation}
	and  the strengthened version of the cut \eqref{eq:bdd2_cut_ex} is
	\begin{equation} \label{eq:bdd2_cut_ex_str}
		\eta \geq 1 - (\xvar_{0} + \xvar_{1} + \xvar_{3} +  \xvar_{4}).
	\end{equation}
\end{example}

\subsection{Integer L-shaped Method}

The integer L-shaped method is the generic alternative decomposition method to the BDD-based decompositions. It solves an integer program both in the master problem and in the subproblems. The master problem, $\MP$, is the same as for the BDD-based decompositions. The subproblem for scenario $\omega$ is:
\bsubeq\label{form:lshape_sp}
\begin{alignat}{2}
	\min \ & \secondstagecost{}^{\top}{\yvar} \\
	\text{s.t.} \ & {\yvar} \in \mathcal{Z}(\mpsol^\omega, \omega) \\
	&  {\yvar} \in \mathcal{Y}(\omega) \subseteq  \{0,1\}^{\yvarindex}. 		
\end{alignat}%
\esubeq %
where $\mathcal{Z}(\mpsol^\omega, \omega)$ are those constraints that depend on the first-stage decisions and correspond to the indicator constraints for the BDD-based methods. Since we have binary variables in the subproblems, we use logic-based Benders cuts. Given a scenario $\omega$ and a master problem solution $\mpsol^\omega$, let $\hat{\spobj}_\omega$ be the optimal objective value of \eqref{form:lshape_sp}. The standard cuts are called ``no-good'' cuts and take the following form:
\begin{equation}\label{eq:lshape_cuts}
	\eta_\omega \geq  \hat{\spobj}_\omega - \hat{\spobj}_\omega\left(\sum_{\substack{	i \in \{1,\hdots, m_1\}\\ \mpsol^\omega_i = 1}} (1-\indvar_i) + \sum_{\substack{	i \in \{1,\hdots, m_1\}\\ \mpsol^\omega_i = 0}}\indvar_i \right).
\end{equation}
We remark that based on problem-specific structure, these cuts can be strengthened. We demonstrate this in Section \ref{sec:smwds} on the SMWDS problem.

\subsection{Pure Benders Cuts}

To strengthen any of the decomposition methods, we have incorporated pure Benders (PB) cuts, i.e., cuts based on the solution of the LP relaxation of  \eqref{form:lshape_sp}. We pass a solution from $\MP$ to \eqref{form:lshape_sp} and solve without integrality constraints. We then use the duals of this solution to add to $\MP$ a traditional Benders cut of the form
\begin{equation}\label{eq:pb_cuts}
	\eta_\omega \geq \mathcal{T}_{\mpsol^\omega}(\indvar),
\end{equation}
 where the function $\mathcal{T}_{\mpsol^\omega}(\indvar)$ returns an affine expression of $\indvar$.

\subsection{Cut Strength Comparison}

We have the following findings when we compare the strength of the proposed cuts.
We remark that the following Propositions \ref{prop:bdd1_lshape} and \ref{prop:bdd2_lshape} rely on non-negative objective coefficients in the second-stage problems,  which is quite common in applications as we are often minimizing costs.

\begin{proposition}  \label{prop:bdd1_lshape}
	Given a scenario $\omega$ and an $\MP$ solution $\mpsol^\omega$, if the subproblem objective coefficients are non-negative, i.e.,  $\secondstagecost{i} \geq 0$ for all $i \in \{1,\hdots, \yvarindex\}$,  	
the strengthened \bdd{2} cuts \eqref{eq:bdd1_cut_stren} are at least as strong as the integer L-shaped cuts \eqref{eq:lshape_cuts}.
\end{proposition}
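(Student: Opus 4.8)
The plan is to treat both cut right-hand sides as affine functions of the indicator vector $\indvar$ and to show that their difference is non-negative coefficient-by-coefficient. Write $B(\indvar)$ for the right-hand side of the strengthened \bdd{2} cut \eqref{eq:bdd1_cut_stren} and $\ell(\indvar)$ for the right-hand side of the integer L-shaped cut \eqref{eq:lshape_cuts}, and set $S_1 = \{i : \mpsol^\omega_i = 1\}$ and $S_0 = \{i : \mpsol^\omega_i = 0\}$. The goal is to establish $B(\indvar) \geq \ell(\indvar)$ for every $\indvar \in \{0,1\}^{m_1}$, which is exactly the statement that the \bdd{2} cut is at least as strong.

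First I would record two facts about the point $\mpsol^\omega$ at which the cuts are generated. By the dual-recovery procedure of Section \ref{subsec:bddbenderscuts}, every nonzero $\hat{\beta}_{ai}$ (hence every nonzero $\betamax_{ji}$) is supported on an index $i$ with $\mpsol^\omega_i = 1$; consequently the correction term $\sum_{a}\sum_{i \in \viol_a}\hat{\beta}_{ai}(1-\mpsol^\omega_i)$ vanishes at $\indvar = \mpsol^\omega$, and LP strong duality for $\Primal{2}$ gives $\hat{\pi}_\rootnode = \hat{\spobj}_\omega$. Second, the hypothesis $\secondstagecost{i} \geq 0$ forces $\hat{\spobj}_\omega \geq 0$, since it is the minimum of $\secondstagecost{}^\top \yvar$ over a set of binary points. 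Using that $\betamax_{ji} = 0$ for $i \in S_0$ and cancelling the constants via $\hat{\pi}_\rootnode = \hat{\spobj}_\omega$, subtracting the two right-hand sides yields the identity
\[
  B(\indvar) - \ell(\indvar) = \sum_{i \in S_1}\Big(\hat{\spobj}_\omega - \sum_{j} \betamax_{ji}\Big)(1-\indvar_i) \; + \; \hat{\spobj}_\omega \sum_{i \in S_0}\indvar_i .
\]
Since $\indvar \in \{0,1\}^{m_1}$, the factors $(1-\indvar_i)$ and $\indvar_i$ are non-negative, and the second sum is non-negative because $\hat{\spobj}_\omega \geq 0$. Hence the entire difference is non-negative, and the proposition follows, precisely when $\sum_{j} \betamax_{ji} \leq \hat{\spobj}_\omega$ for every $i \in S_1$.

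This per-index bound is the crux of the argument and the step I expect to be the main obstacle: it asserts that the total layer-maximal dual weight that a single relaxed constraint contributes to the cut cannot exceed the optimal shortest-path length $\hat{\pi}_\rootnode = \hat{\spobj}_\omega$. I would prove it from the shortest-path interpretation of the duals, noting that each $\hat{\beta}_{ai}$ for an arc $a=(u,v)$ equals the path-length slack $\pi_u - \pi_v$ (minus $\secondstagecost{\map(a)}$ on arcs in $\onearcs$), which is non-negative under the non-negativity hypothesis; the per-layer maximization in \eqref{eq:bdd1_cut_stren} then replaces, within each arc layer $\arclayer{j}$, a collection of such slacks by a single representative, and the layered structure of $\BDD^\omega$ should allow these per-layer contributions to telescope along a $\rootnode$–$\term$ path so that their sum is controlled by $\hat{\pi}_\rootnode$. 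Making this telescoping rigorous is the delicate part: one must argue that a single relaxed constraint cannot simultaneously account for shortcut savings in several layers exceeding the total distance, and one must control the layer maxima that are attained at nodes lying off the optimal path. The non-negativity assumption on $\secondstagecost{}$ enters here as well, guaranteeing both $\hat{\spobj}_\omega \geq 0$ and that every arc slack appearing in the bound is itself non-negative, so that the layer-wise reductions can be compared monotonically against the shortest-path length.
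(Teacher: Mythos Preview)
Your proposal is correct and follows essentially the same route as the paper's proof. In particular, the paper also reduces the comparison to the single per-index bound $\sum_{j}\betamax_{ji}\le \hat{\pi}_\rootnode$ for each $i\in S_1$, and it justifies that bound with the same layered shortest-path/telescoping idea you sketch: for each arc layer $\arclayer{j}$ the quantity $\betamax_{ji}+\secondstagecost{\map(\bar a)}$ is interpreted as the path-length increase across layer $j$, and since $\hat{\pi}_\rootnode$ is the total path length over all layers and $\secondstagecost{}\ge 0$, summing the layer contributions gives $\sum_j \betamax_{ji}\le \hat{\pi}_\rootnode$.
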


\begin{proposition} \label{prop:bdd2_lshape}
	Given a scenario $\omega$ and an $\MP$ solution $\mpsol^\omega$, 
	if the subproblem objective coefficients are non-negative, i.e.,  $\costtwo{i}\geq 0$ for all $i \in \{1,\hdots, \yvarindex\}$,  	
	the strengthened \bdd{3} cuts \eqref{eq:bdd2_cut_stren} are at least as strong as the integer L-shaped cuts \eqref{eq:lshape_cuts}.
\end{proposition}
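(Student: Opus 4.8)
The plan is to prove the stronger statement that the strengthened \bdd{3} cut \eqref{eq:bdd2_cut_stren} dominates the integer L-shaped cut \eqref{eq:lshape_cuts} pointwise, i.e.\ that its right-hand side is at least that of \eqref{eq:lshape_cuts} for every admissible $\indvar$. The argument parallels that of Proposition \ref{prop:bdd1_lshape} for \bdd{2}. First I would identify the cut constant: by Proposition \ref{lemma:form_equiv} and the subsequent proposition relating $\AltPrimal{3}$ to \eqref{eq:altprob_obj}--\eqref{eq:sp1_yconstr}, the bottom-up shortest-path potential $\hat{\pi}_\rootnode$ equals the recourse optimum $\hat{\spobj}_\omega$ at the current solution $\mpsol^\omega$. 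Next I would record the sign structure of the dual solution: whenever $\mpsol^\omega_{\map(a)}=1$ the one-arc cost in $\Primal{3}$ is already reduced to $\costtwo{\map(a)}$, so $\pi_u-\pi_v\le\costtwo{\map(a)}$ and the construction in \eqref{eq:D2_pizUB}--\eqref{eq:D2_zbd} yields $\hat{z}_a=0$. Equivalently $\hat{z}_a>0$ forces $\mpsol^\omega_{\map(a)}=0$, and hence $\zmax_j>0$ forces $\mpsol^\omega_j=0$; in particular both cuts are tight at $\indvar=\mpsol^\omega$.

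With $\hat{\pi}_\rootnode=\hat{\spobj}_\omega$ the common constant cancels, and the claim reduces to $\sum_{j}\zmax_j\indvar_j \le \hat{\spobj}_\omega\big(\sum_{i:\mpsol^\omega_i=1}(1-\indvar_i)+\sum_{i:\mpsol^\omega_i=0}\indvar_i\big)$. By the sign structure the left-hand side ranges only over indices $j$ with $\mpsol^\omega_j=0$, which are exactly those appearing in the second sum on the right; the first sum is nonnegative (as $\indvar_i\le1$ and $\hat{\spobj}_\omega\ge0$) and may be discarded. Since $\indvar_j\ge0$, a term-by-term comparison shows it suffices to establish the per-index bound $\zmax_j\le\hat{\spobj}_\omega$ for every $j$ with $\mpsol^\omega_j=0$.

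This per-index bound is the crux and the step I expect to be the main obstacle. Writing $\zmax_j=\hat{z}_{a^*}=\pi_{u^*}-\pi_{v^*}-\costtwo{j}$ for a maximizing one-arc $a^*=(u^*,v^*)\in\arclayer{j}$, nonnegativity of $\pi_{v^*}$ and of $\costtwo{j}$ (the latter being the hypothesis, the former following because $\costone{i},\costtwo{i}\ge0$ keep every potential $\pi\ge0$) gives $\zmax_j\le\pi_{u^*}$. It then remains to show $\pi_{u^*}\le\hat{\pi}_\rootnode=\hat{\spobj}_\omega$, i.e.\ that no node's shortest completion distance exceeds the root's. I would argue this from the layered shortest-path structure of the BDD, in the same spirit as the per-layer path-length-increase accounting used in Proposition \ref{prop:bdd1_lshape}, tracking how each arc layer's contribution accumulates along a shortest root--terminal path. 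The delicate point, and the reason this cannot simply be quoted from the \bdd{2} proof, is that for \bdd{3} we have $m_1=\yvarindex$, so the logical index and the arc layer coincide and the bound must be certified layer-by-layer rather than after summing the increments over all layers.
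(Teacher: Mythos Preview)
Your reduction mirrors the paper's proof: both invoke $\hat\pi_{\rootnode}=\hat\tau_\omega$, observe that $\zmax_j>0$ forces $\mpsol^\omega_j=0$ so that only the second sum on the right of \eqref{eq:lshape_cuts} matters, discard the first (nonnegative) sum, and reduce everything to the per-index bound $\zmax_j\le\hat\pi_{\rootnode}$.

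At that crux, however, the route you sketch does not go through. The inequality $\pi_{u^*}\le\hat\pi_{\rootnode}$ --- ``no node's shortest completion distance exceeds the root's'' --- is false in general, even in a layered BDD with all arc costs nonnegative: the root may admit a cheap $\rootnode$--$\term$ path that bypasses $u^*$, while every completion out of $u^*$ is expensive. Concretely, take $\mathcal Y(\omega)=\{(0,1),(1,0)\}$, $\costone{}=(1,100)$, $\costtwo{}=(0,0)$, $\mpsol^\omega=(0,0)$; the node reached after $y_1=0$ has potential $100$ while $\hat\pi_{\rootnode}=1$, so $\zmax_2=100>1=\hat\pi_{\rootnode}$, and indeed the resulting \bdd{3} cut $\eta\ge 1-\indvar_1-100\,\indvar_2$ is strictly weaker than the L-shaped cut $\eta\ge 1-\indvar_1-\indvar_2$. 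The paper does not go via $\pi_{u^*}\le\hat\pi_{\rootnode}$; instead it asserts $\hat z_a+\costtwo{\map(a)}=\pi_i-\pi_j\le\hat\pi_{\rootnode}$ directly, reasoning that $\pi_i-\pi_j$ is ``the length of a single arc in [the shortest $\rootnode$--$\term$] path'' and hence bounded by the total path length. But that justification only covers arcs lying \emph{on} the shortest $\rootnode$--$\term$ path, whereas $\zmax_j$ is a maximum over \emph{all} one-arcs in layer $j$; the same two-variable instance shows the bound fails for off-path arcs. So the step you rightly flag as the main obstacle is not closed by the layered-accounting argument you propose, and the paper's own justification at this point does not close it either.
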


We have also shown the two kinds of BDD cuts and the PB cuts are incomparable, whose proof is in \ref{app:cuts}. These cuts being incomparable  indicate that it may be beneficial to add cuts from multiple classes at each iteration. Before we compare the strength of the cuts in practice on the SMWDS application in Section \ref{sec:smwds}, we extend the two-stage problem to a risk-averse setting.

\section{Incorporation of Conditional Value at Risk} \label{sec:cvar}

Incorporating risk measures into SPs is a well-established practice in the literature, however there is limited work when the second-stage problem has integer variables \citep{schultz2006,vanbeesten2020}. To our knowledge, we propose the first decomposition method for 2SP with the CVaR risk measure and integer second-stage variables. We will use BDDs or integer L-shaped cuts to convexify the second-stage value function. We remark that this decomposition is rooted in the work of \cite{Noyan2012} who considers a decomposition approach for risk-averse 2SP with \textit{continuous} second-stage variables. 

{\credrev
Formally, the value-at-risk (VaR) of a random variable $Z$ at level $\alpha$ is the $\alpha$-quantile
\begin{equation}
	\var(Z) =  \inf_{\zeta \in \R} \{ \zeta : \Prob[ Z \leq \zeta] \geq \alpha\}.
\end{equation}
$\var$ is the smallest value $\zeta$ such that with probability $\alpha$ the random variable $Z$ will not exceed $\zeta$ \citep{rockafellar2000optimization}. Thus $\var$ gives an upper bound on the random variable $Z$ that will only be exceeded with a probability of $(1-\alpha)$. However, VaR does not quantify the severity of exceeding this upper bound. This is why CVaR, also called expected shortfall, is a useful risk measure. Moreover, CVaR is a coherent risk measure, which helps keep optimization problems computationally tractable.

The intuitive definition of CVaR, for random variable $Z$ with a continuous distribution or a general distribution where $\var(Z)$ is not an atom of the distribution of $Z$, is the conditional expectation that the random variable $Z$ exceeds $\var$ , i.e.,
\begin{equation}
	\cvar(Z) = \expect[Z : Z \geq \var(Z)].
\end{equation} 
Thus it measures the severity of exceeding $\var$. More formally and generally, letting $[a]_+ = \max\{0,a\}$ for $a \in \R$, CVaR of a random variable $Z$ at confidence level $\alpha \in (0,1]$ is defined as
\begin{equation}
	\cvar(Z) = \inf_{\zeta \in \R} \left\{\zeta + \frac{1}{1-\alpha} \expect([Z-\zeta]_+)\right\}.
\end{equation}
For more details, we refer the readers to \citep{rockafellar2002conditional}.}

We will consider the two-stage mean-risk stochastic program
\begin{equation}\label{eq:cvar_gen}
	\min_{{x} \in \mathcal{X}} \ (1+\lambda){\xweights^\top \xvar} + \expect[\mathcal{Q}({\xvar}, \omega)] + \lambda\cvar(\mathcal{Q}({\xvar}, \omega))
\end{equation}
where $\lambda \geq 0$ is a risk coefficient and $\cvar$ is the CVaR at level $\alpha \in (0,1]$. Thus we take the $\cvar$ of the recourse function $\mathcal{Q}({\xvar}, \omega)$, substituting its objective function for the random variable.
In the decomposition approaches presented by  \cite{Noyan2012}, the second-stage problem is an LP, thus convexity of $\mathcal{Q}({\xvar}, \omega)$ in $\xvar$ for all $\omega \in \Omega$ follows easily, and they use LP duality to approximate the value of the mean-risk function of the recourse cost. The key to our decomposition to solve \eqref{eq:cvar_gen} is the observation that we have already convexified the approximation of the recourse problem using the BDD-based cuts and the integer L-shaped cuts. 

%

\subsection{Decomposition Algorithm}

In this algorithm, we solve the second-stage problems and generate cuts from the scenario BDDs as in Section \ref{sec:benders}. We will take the optimal recourse function value from the BDD subproblem and use it in the $\cvar$ expression, therefore the CVaR value of the recourse cost for $\bdd{3}$ becomes
 {\cred
\begin{equation}  \label{eq:cvar_recourse}
\sum_{\omega \in \Omega} p_\omega \inf_{\zeta\in \R} \left\{\zeta + \frac{1}{1-\alpha} \left [  \left( \hat{\pi}^\omega_\rootnode   - \sum_{j \in \{1, \hdots, m_1\}} \hat{z}^{\max, \omega}_j \indvar_{j}  \right) - \zeta\right]_+\right\},
\end{equation}
 where $\hat{\pi}^\omega_\rootnode   - \sum_{j \in \{1, \hdots, m_1\}} \hat{z}^{\max, \omega}_j \indvar_{j}$ is the optimal recourse cost in scenario $\omega$.}
 
In the master problem of this decomposition, as in $\MP$, we introduce the variables $\eta_\omega$ to model the objective value of the recourse problem and we introduce new variables $\theta_\omega$ to model the value of CVaR for each scenario $\omega \in \Omega$. We remark that we can now replace the optimal recourse cost with $\eta_\omega$ in \eqref{eq:cvar_recourse}, since  because we are minimizing at any optimal solution we will have $ \eta_\omega =  \hat{\pi}^k_\rootnode   -\sum_{j \in \{1, \hdots, m_1\}} \hat{z}^{\max k}_j \indvar_{j}  $ for the $k^{\text{th}}$ cut generated. This is in contrast to \citep{Noyan2012}, which uses the original optimal recourse cost in their model.
We also define variables $\zeta^k$ to model  $\var$ of the recourse cost and $\nu^{\omega k}$ to model the function $[\eta_\omega - \zeta^k]_+, \forall \omega \in \Omega$ in cut $k$. 

We can now formulate the master problem after $K$ cuts have been generated as
\bsubeq\label{form:cvar_mp}
\begin{alignat}{2}
	\min \ & (1+\lambda){\xweights^\top \xvar}  + \sum_{\omega \in \Omega} p_\omega\eta_\omega + \lambda \sum_{\omega \in \Omega} p_\omega\theta_\omega   \\
	\text{s.t.} \ &  {\xvar} \in \mathcal{X} \\ 
	&{\cred  \indvar_i =	\indicator(\cond) } &&  \forall i =1,\hdots,m_1 \\
	& \eta_\omega \geq \hat{\pi}^k_\rootnode   - \sum_{j \in \{1, \hdots, m_1\}} \hat{z}^{\max k}_j \indvar_{j}  
	\quad && \forall \omega \in \Omega, k=1,\hdots,K \label{eq:opt_recourse}\\
	& \theta_\omega\geq \zeta^k + \frac{1}{1-\alpha}  \nu^{\omega k} \quad && \forall \omega \in \Omega, k=1,\hdots,K \label{eq:cvar_cut}\\
	& 		\nu^{\omega k} \geq \eta_\omega - \zeta^k \quad && \forall \omega \in \Omega, k=1,\hdots,K \label{eq:cvar_cut2}\\
	& 		\nu^{\omega k} \geq 0\quad && \forall \omega \in \Omega, k=1,\hdots,K \label{eq:cvar_nubd} \\
	& \zeta^k \in \R &&  k=1,\hdots,K \label{eq:cvar_zetabd}
\end{alignat}%
\esubeq%
where the optimality cuts \eqref{eq:opt_recourse} are generated using the \bdd{3} procedure. These cuts could be replaced with those from the \bdd{2} procedure or with integer L-shaped cuts derived from the original second-stage problem. The complete CVaR decomposition algorithm can be found  in \ref{app:cvar}

\section{Computational Results on a Novel Application } \label{sec:smwds}

\subsection{The Stochastic Minimum Weight Dominating Set Problem}
Recall the SMWDS problem described in Section \ref{subsubsec:smwds}, where we have binary decision variables $\xvar_v = 1 $ if the vertex $v \in \vertexset$ is assigned to the dominating set in the first stage, and $\yvar_v = 1 $ if the vertex $v$ is assigned to the dominating set in the second stage under equally probable scenario $\omega \in \scenset$. 
Given a scenario set  $\scenset$, the deterministic equivalent for SMWDS is formulated as:
\bsubeq\label{form:EF}
\begin{alignat}{2}
	\min \ &  \sum_{v \in \vertexset}\xweights_v \xvar_v  +\frac{1}{ |\scenset|} \sum_{\omega \in \scenset} \sum_{v \in \vertexset} \secondstagecost{v} \yvar_v\\
\text{s.t.} \ & \yvar_v + \sum_{u \in \neighb_\omega'(v)} \yvar_u  \geq 1 - {\xvar}_v -  \sum_{u \in \neighb_\omega'(v)}  {\xvar}_u   \quad && \forall \ v \in \vertexset'_\omega \label{eq:cover}\\ 
		&\yvar_v  \in  \{0,1\} &&\forall \ v \in \vertexset'_\omega \label{eq:xdomain}\\
	&  \xvar_v \in  \{0,1\} &&\forall \ v \in \vertexset \label{EF:ydomain} 
\end{alignat}%
\esubeq %
Given this formulation, the constraints \eqref{eq:sp1_logical} become
\begin{equation}
	\xvar_v = 1 \implies  \secondstagecost{v} = 0, \quad \forall \ v \in \vertexset'_\omega,    \omega \in \scenset 
\end{equation}
which clearly do not need indicator variables, and the constraints \eqref{eq:sp2_logical2} become
\begin{equation}\label{eq:smwds_cap}
	\xvar_v +   \sum_{u \in \neighb_\omega'(v)}  \xvar_u  =0 \implies \yvar_v + \sum_{u \in \neighb_\omega'(v)} \yvar_u  \geq 1, \quad \forall \ v \in \vertexset'_\omega,    \omega \in \scenset.
\end{equation}
Constraints \eqref{eq:smwds_cap} also do not need indicator variables  setting $\mpsol^\omega_{v} = \xvar_v +  \sum_{u \in \neighb_\omega'(v)}  \xvar_u$, this ensures that if a vertex or its neighbour is selected in the first stage and that vertex exists in the scenario, then the cover constraints \eqref{eq:cover} become redundant.

We build the subproblem BDDs as in Examples \ref{ex:bdd1} and \ref{ex:bdd2}, and the cuts for the BDD-based algorithms are exactly as seen in \eqref{eq:bdd1_cut_stren} and \eqref{eq:bdd2_cut_stren} replacing the variables $\indvar$ with the respective indicator expressions. We will also compare the BDD-based algorithms with the integer L-shaped approach, where we solve the subproblems as IPs. The strengthened integer L-shaped cuts \eqref{eq:lshape_cuts} for this problem are:
\begin{equation} \label{eq:smwds_lshape}
	\eta_\omega \geq \mathcal{Q}(\hat{\xvar}, \omega) - \mathcal{Q}(\hat{\xvar}, \omega)\sum_{{v \in \vertexset,  \hat{\xvar}_v =0}} \xvar_v.
\end{equation}

Finally, we also compare the solution methods with and without the PB cuts which are given by relaxing the binary constraints, removing the upper bounds on the $\yvar$ variables, and solving the subproblem as an LP. We then take traditional Benders cut from the dual solution. Letting $\hat{\dualvar}$ be the optimal dual variables associated with the constraints \eqref{eq:cover}, then the PB  optimality cuts are:
\begin{equation}
\eta_\omega \geq  \sum_{v \in \vertexset_\omega'} \hat{\dualvar}_v \left( 1 - {\xvar}_v -  \sum_{u \in \neighb_\omega'(v)}  {\xvar}_u\right).
\end{equation}
Given these methods, the extension to the risk averse setting is natural, using \eqref{form:cvar_mp} as the master problem and adding the appropriate CVaR and optimality cuts given the solution method. We next compare these methods on SMWDS instances.

\subsection{Computational Results}

We compare both BDD-based decomposition algorithms against the integer L-shaped method, with and without PB cuts. We implement the algorithms in C++ and use the CPLEX 12.10 solver, the experiments were conducted on a single thread on a Linux workstation with 3.6GHz Intel Core i9-9900K CPUs and 40GB RAM with a solution time limit of $3600$ seconds after the BDDs are generated. We implement the decomposition algorithms as a branch-and-cut procedure, using CPLEX lazy callbacks to add optimality and PB cuts at integer feasible solutions. We note that PB cuts could also be added at all feasible solutions. Finally, we use CPLEX heuristic callbacks, which are called at every LP feasible solution that also has a better relaxation value than the best available integer solution. The callback then constructs an integer feasible solution to use as a new incumbent. 

The test set is comprised of $200$ instances, with $|\vertexset| \in \{30,35,...,50\}$ and edge density $\in \{0.2,0.4,0.6,0.8\}$, randomly generated based on benchmarks from the deterministic MWDS literature \citep{jovanovic2010ant}. 
{\credrev Details of the instance generation can be found in \ref{sec:instancegen}.}
We conduct sample average approximation (SAA) analysis and conclude that to achieve a worst-case optimality gap below $1\%$, we must use at least $800$ scenarios and proceed to use $850$ scenarios in our experiments. The SAA analysis table can be found in \ref{app:saa}.

\begin{figure}[b] 
	\centering
	\begin{subfigure}[t]{0.48\textwidth}
		\includegraphics[width=\textwidth]{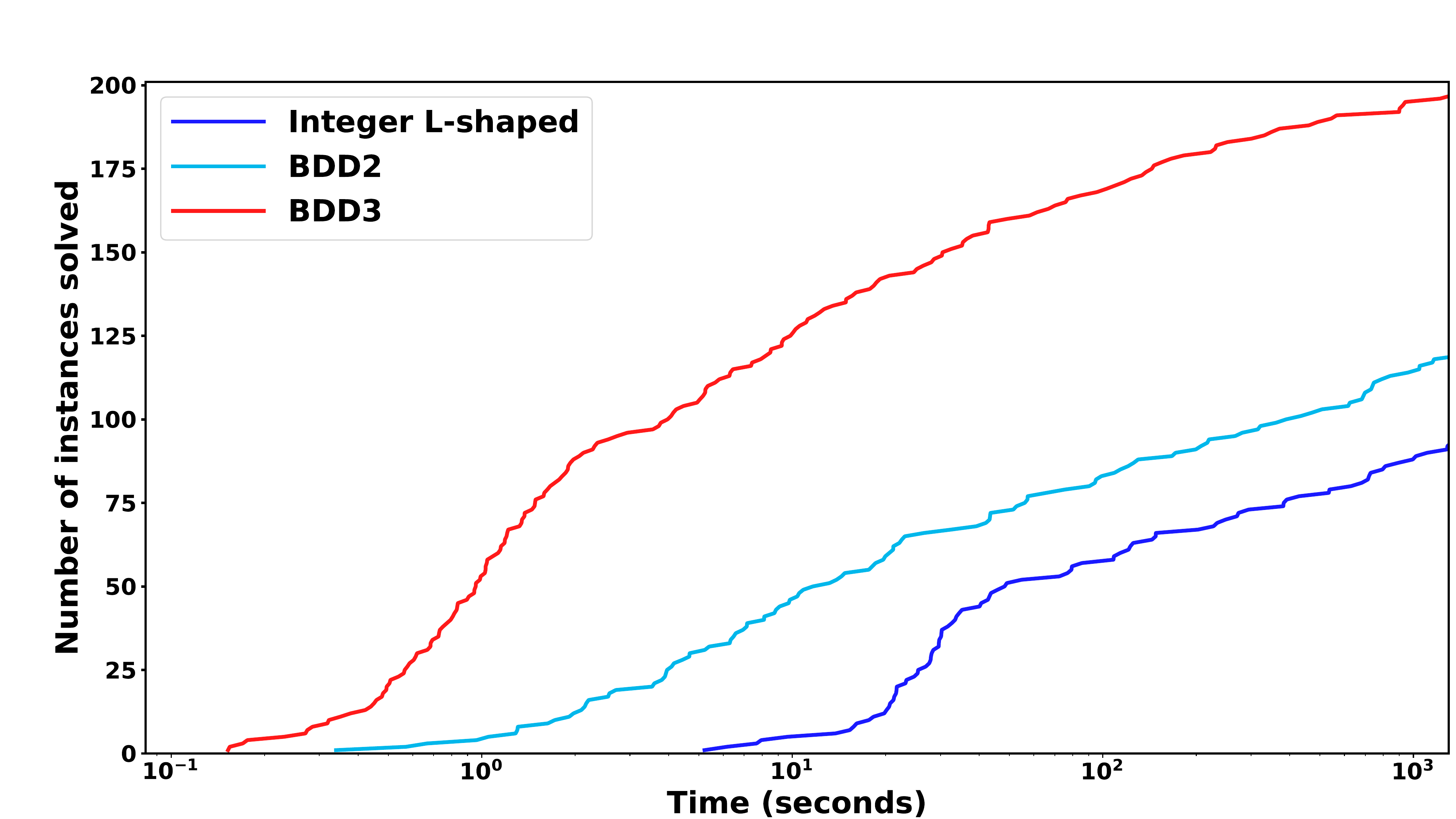}
		\caption{Methods without PB cuts.}
		\label{fig:smwds_nopb}
	\end{subfigure}
~
	\begin{subfigure}[t]{0.48\textwidth}
		\includegraphics[width=\textwidth]{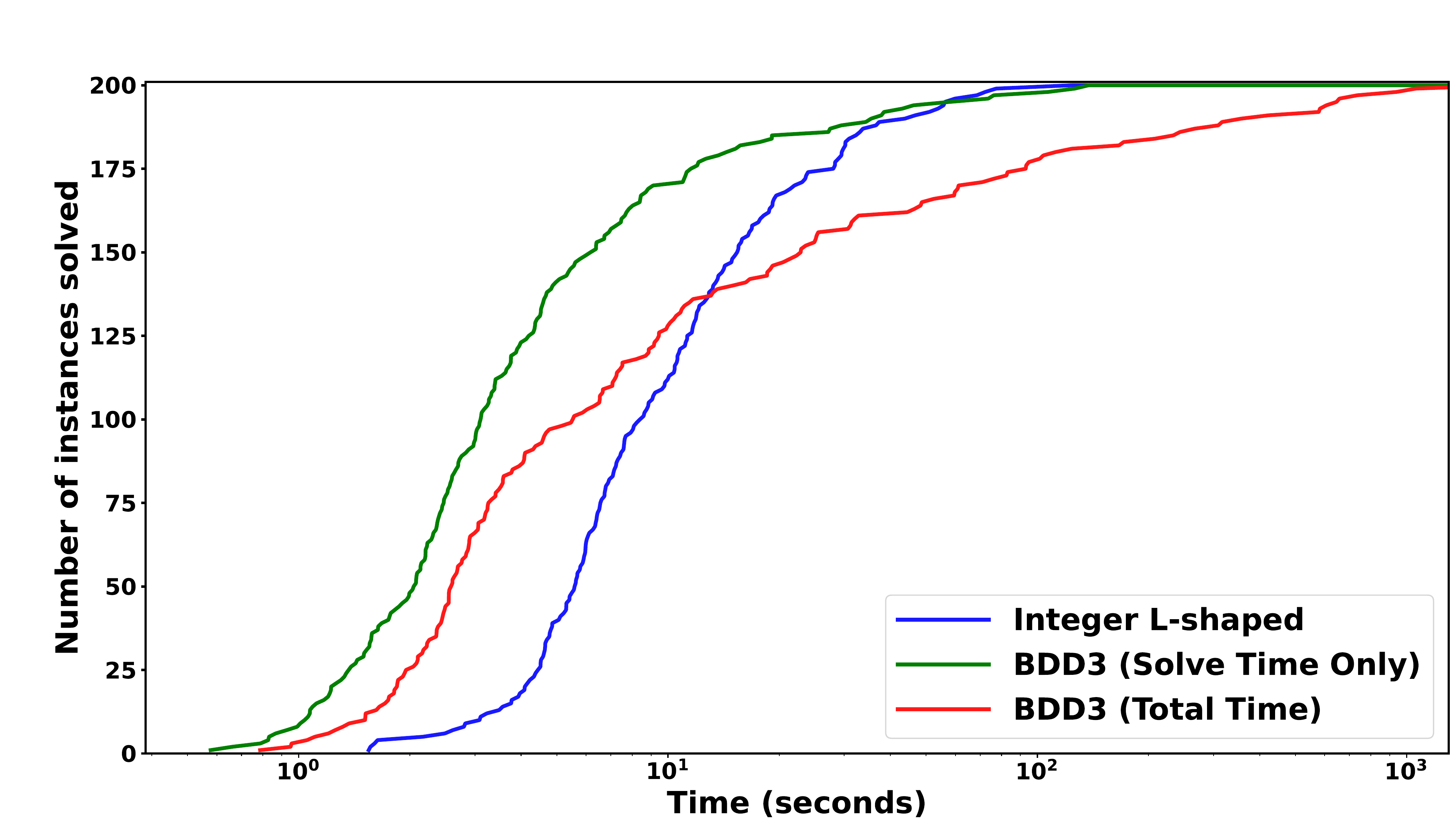}
		\caption{Methods with PB cuts.}
		\label{fig:smwds_pb}
	\end{subfigure}
	\caption{Comparison of \bdd{2}, \bdd{3}, and integer L-shaped methods.
	 }
	\label{fig:smwds_results}
\end{figure}%

We first compare both BDD-based decomposition methods against the integer L-shaped method without PB cuts. As seen in Figure \ref{fig:smwds_nopb}, which shows the total time (taking into account the building of the BDDs) the \bdd{3} method performs best and is able to solve all instances within the time limit, while the integer L-shaped algorithm is only able to solve $103$ of the $200$ instances within the time limit. The \bdd{2} method is able to solve $135$ instances, but it hits the memory limit on $28$ instances while building the BDDs, for this reason we exclude it from the remainder of our study. 
{\cred For this set of test instances, as seen in Figure \ref{fig:smwds_bddsize},  the mean size of the BDDs  of type $\bdd{3}$ are always smaller than those of $\bdd{2}$, {\credrev further details of this comparison can be found in \ref{app:bddsize}}.}
Results with PB cuts are given in Figure \ref{fig:smwds_pb}.
 The PB cuts for this problem are very strong and allow the integer L-shaped algorithm to perform best on the instances with low density. The  \bdd{3} method outperforms integer L-shaped on $135$ instances, and we see that the main obstacle for the remaining $35$ instances is the time it takes to generate the BDDs for the scenario subproblems. {\credrev A brief sensitivity analysis of the methods can be found in \ref{app:sens}.}

\begin{figure}[t]\centering
	\includegraphics[width=0.6\textwidth]{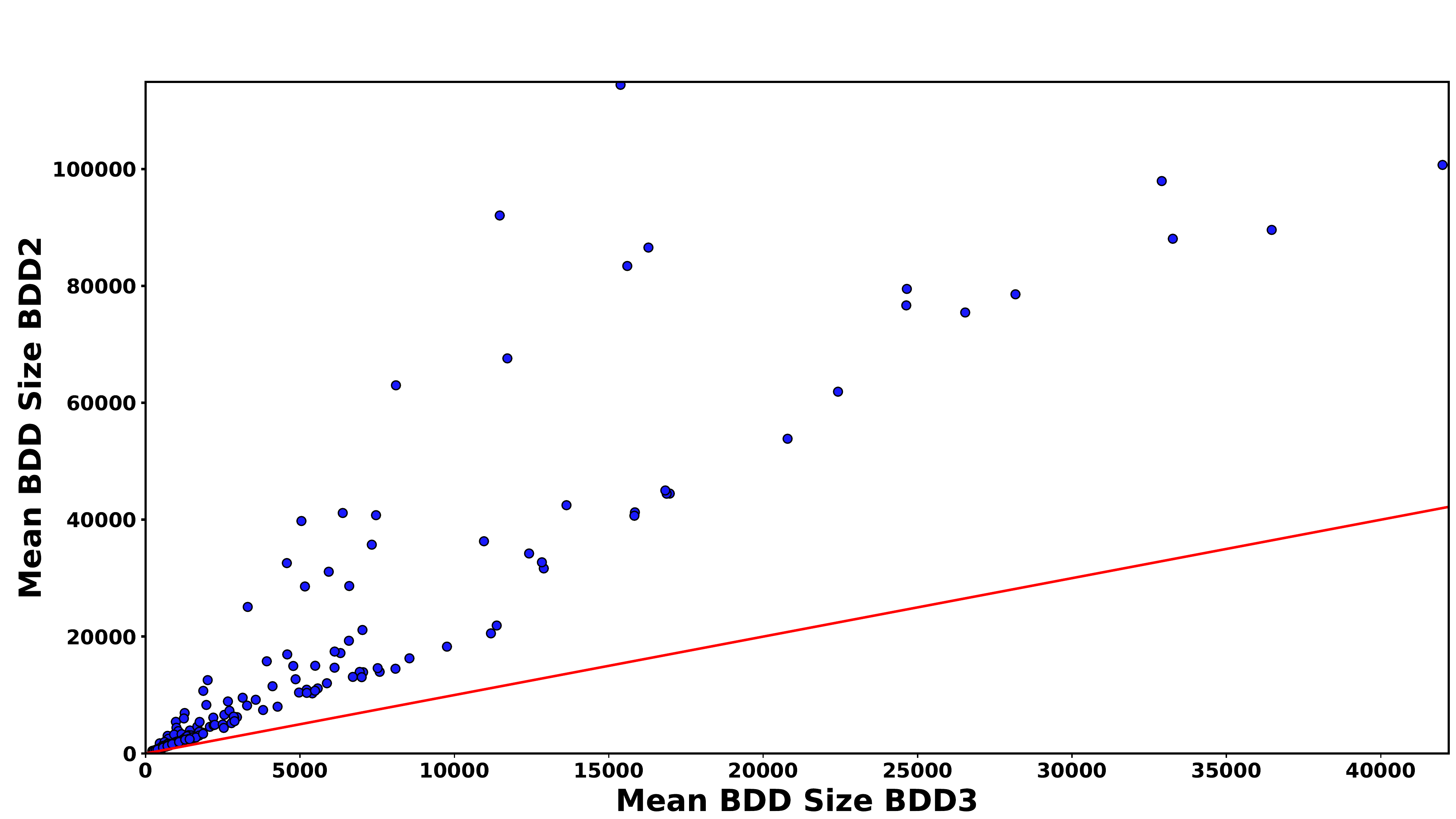}
	\caption{Comparison of mean BDD size for each instance, the red line shows where the mean BDD size is equal. Instances where \bdd{2} hit the memory limit have been omitted.}
	\label{fig:smwds_bddsize}
\end{figure}%

Next, we compare the algorithms in the CVaR setting with $\lambda =0.1$ and $\alpha =0.9$, and separating the CVaR cuts only after there are no remaining recourse optimality cuts, and using PB cuts. These results can be seen in Figure \ref{fig:smwds_cvar}, where the \bdd{3} method performs best on all but $5$ instances.

\begin{figure}[h]\centering
	\includegraphics[width=0.6\textwidth]{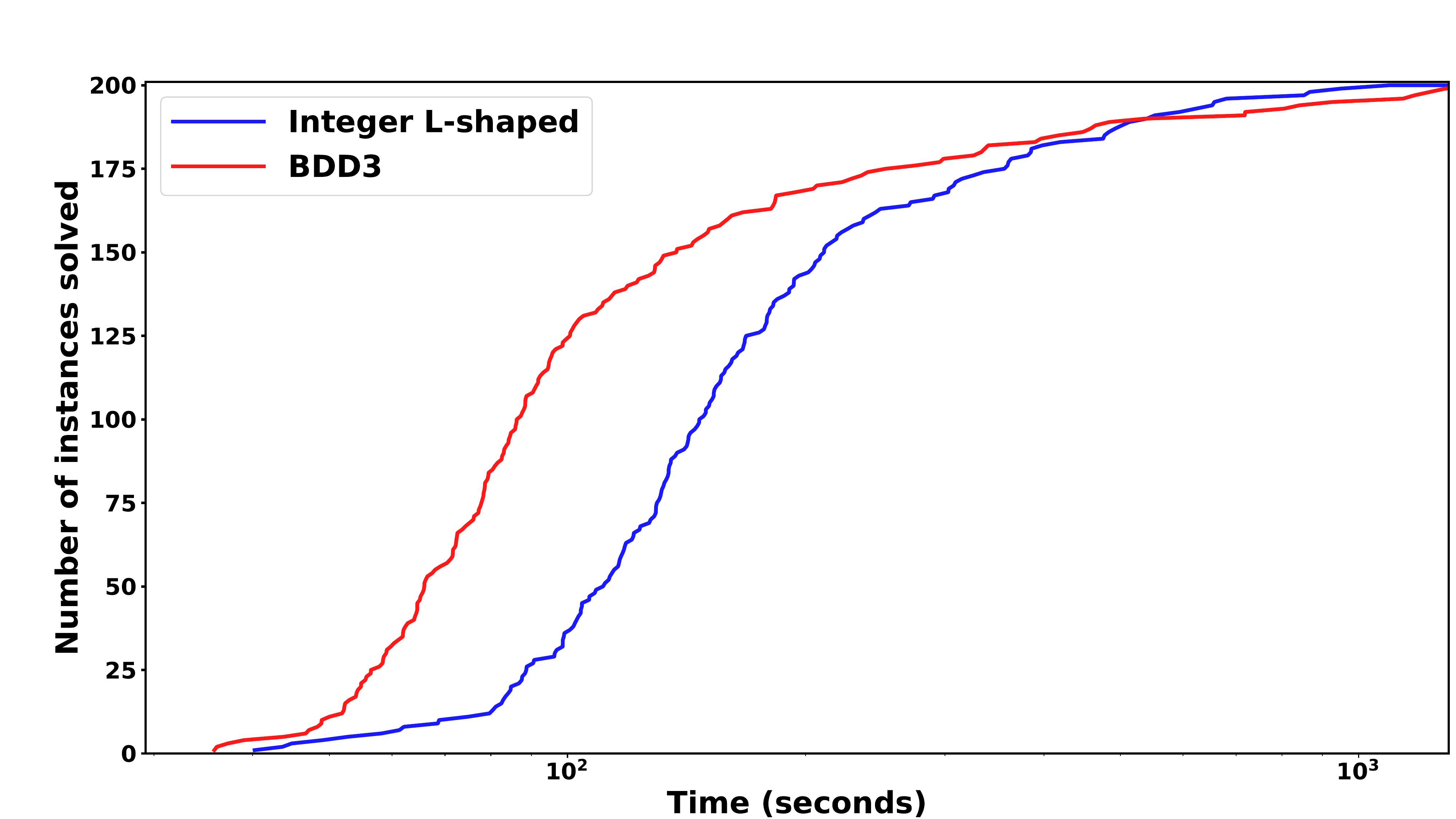}
	\caption{Comparison of methods using PB cuts on CVaR with $\lambda =0.1$ and $\alpha =0.9$.}
	\label{fig:smwds_cvar}
\end{figure}%

Overall, we conclude that the \bdd{3} method performs best on this application problem, as it requires less memory to build the BDDs than the \bdd{2} method and is better than or comparable to the integer L-shaped method with PB cuts on the standard and risk-averse formulations. 

\section{Conclusion} \label{sec:concl}

We considered a class of two-stage stochastic programming problems with binary second-stage variables and characterized by implication constraints, which cover a wide array of applications.
 We reformulated the second-stage problems using BDDs in order to apply Benders decomposition. We proposed a novel reformulation that parameterized the BDD arc costs using first-stage solutions and generalized an existing setting for the BDD reformulation  from \citep{lozano2018bdd}. We analyzed the strength of these BDD-based Benders cuts and found they were incomparable but that both dominated the more traditional integer L-shaped cuts. We then extended the results to a risk-averse setting, presenting the first decomposition method for 2SP with binary recourse and CVaR. Finally, we proposed the SMWDS problem as an application for these methods and showed via computational experiments that our novel BDD-based algorithm performs best in practice.

One of the main limitations of this BDD-based approach is the time spent to build the BDDs, thus it may be better suited to stochastic problems where a smaller number of scenarios are required. 
As a future research direction, we propose applying this methodology in a non-linear setting, such as submodular optimization. There is also promise in applying these methods to problems where PB cuts are weak, and to applications in the risk-averse and robust optimization settings.
{\cred The BDD representations presented are exact, so another promising future direction is to explore ways to circumvent computational cost using relaxations and restrictions of the BDD structure. These could then be easily incorporated into the branch-and-cut algorithm to provide bounds and could be used to give approximate solutions.}


{\small
\bibliographystyle{apalike}
\bibliography{MWDSrefs} 

\begin{thebibliography}{}

\bibitem[Ahmed et~al., 2004]{ahmed2004finite}
Ahmed, S., Tawarmalani, M., and Sahinidis, N.~V. (2004).
\newblock A finite branch-and-bound algorithm for two-stage stochastic integer
  programs.
\newblock {\em Mathematical Programming}, 100(2):355--377.

\bibitem[Alexopoulos and Jacobson, 2000]{Alexopoulos_statespace}
Alexopoulos, C. and Jacobson, J.~A. (2000).
\newblock State space partition algorithms for stochastic systems with
  applications to minimum spanning trees.
\newblock {\em Networks}, 35(2):118--138.

\bibitem[Angulo et~al., 2016]{angulo2016}
Angulo, G., Ahmed, S., and Dey, S.~S. (2016).
\newblock Improving the integer {L}-shaped method.
\newblock {\em INFORMS Journal on Computing}, 28(3):483--499.

\bibitem[Arslan and Detienne, 2022]{arslan2021decomposition}
Arslan, A.~N. and Detienne, B. (2022).
\newblock Decomposition-based approaches for a class of two-stage robust binary
  optimization problems.
\newblock {\em INFORMS Journal on Computing}, 34(2):857--871.

\bibitem[Bergman and Cire, 2018]{bergman2018nonlin}
Bergman, D. and Cire, A.~A. (2018).
\newblock Discrete nonlinear optimization by state-space decompositions.
\newblock {\em Management Science}, 64(10):4700--4720.

\bibitem[Bergman et~al., 2016]{bergman2016decision}
Bergman, D., Cire, A.~A., Van~Hoeve, W.-J., and Hooker, J. (2016).
\newblock {\em Decision Diagrams for Optimization}, volume~1.
\newblock Springer.

\bibitem[Boria et~al., 2018]{boria2018}
Boria, N., Murat, C., and Paschos, V.~T. (2018).
\newblock The probabilistic minimum dominating set problem.
\newblock {\em Discrete Applied Mathematics}, 234:93--113.

\bibitem[Bouamama and Blum, 2016]{bouamama2016hybrid}
Bouamama, S. and Blum, C. (2016).
\newblock A hybrid algorithmic model for the minimum weight dominating set
  problem.
\newblock {\em Simulation Modelling Practice and Theory}, 64:57--68.

\bibitem[Bramoull{\'e} and Kranton, 2007]{bramoulle2007public}
Bramoull{\'e}, Y. and Kranton, R. (2007).
\newblock Public goods in networks.
\newblock {\em Journal of Economic Theory}, 135(1):478--494.

\bibitem[Car{\o}e and Schultz, 1999]{caroe1999dual}
Car{\o}e, C.~C. and Schultz, R. (1999).
\newblock Dual decomposition in stochastic integer programming.
\newblock {\em Operations Research Letters}, 24(1-2):37--45.

\bibitem[Car{\o}e and Tind, 1998]{caroe1998}
Car{\o}e, C.~C. and Tind, J. (1998).
\newblock {L}-shaped decomposition of two-stage stochastic programs with
  integer recourse.
\newblock {\em Mathematical Programming}, 83(1):451--464.

\bibitem[Castro et~al., 2021]{castro2021combinatorial}
Castro, M.~P., Cire, A.~A., and Beck, J.~C. (2021).
\newblock A combinatorial cut-and-lift procedure with an application to 0--1
  second-order conic programming.
\newblock {\em Mathematical Programming}, pages 1--57.

\bibitem[Castro et~al., 2022]{Castro2022}
Castro, M.~P., Cire, A.~A., and Beck, J.~C. (2022).
\newblock Decision diagrams for discrete optimization: A survey of recent
  advances.
\newblock {\em INFORMS Journal on Computing}, 34(4):2271--2295.

\bibitem[Coniglio et~al., 2022]{Coniglio2020submodular}
Coniglio, S., Furini, F., and Ljubi{\'c}, I. (2022).
\newblock Submodular maximization of concave utility functions composed with a
  set-union operator with applications to maximal covering location problems.
\newblock {\em Mathematical Programming}, 196:9--56.

\bibitem[Dumouchelle et~al., 2022]{dumochelle2022neur2sp}
Dumouchelle, J., Patel, R., Khalil, E.~B., and Bodur, M. (2022).
\newblock Neur{2SP}: Neural two-stage stochastic programming.
\newblock In {\em Proceedings of the 36th Annual Conference on Advances in
  Neural Information Processing Systems (NeurIPS)}.

\bibitem[Gade et~al., 2014]{Gade2014}
Gade, D., K{\"u}{\c c}{\"u}kyavuz, S., and Sen, S. (2014).
\newblock Decomposition algorithms with parametric {G}omory cuts for two-stage
  stochastic integer programs.
\newblock {\em Mathematical Programming}, 144(1):39--64.

\bibitem[Hagberg et~al., 2008]{SciPyProceedings_11}
Hagberg, A.~A., Schult, D.~A., and Swart, P.~J. (2008).
\newblock Exploring network structure, dynamics, and function using networkx.
\newblock In Varoquaux, G., Vaught, T., and Millman, J., editors, {\em
  Proceedings of the 7th Python in Science Conference}, pages 11 -- 15,
  Pasadena, CA USA.

\bibitem[Haus et~al., 2017]{haus2017scenario}
Haus, U.-U., Michini, C., and Laumanns, M. (2017).
\newblock Scenario aggregation using binary decision diagrams for stochastic
  programs with endogenous uncertainty.
\newblock {\em arXiv preprint arXiv:1701.04055}.

\bibitem[He et~al., 2011]{He2011genetic}
He, J.~S., Cai, Z., Ji, S., Beyah, R., and Pan, Y. (2011).
\newblock A genetic algorithm for constructing a reliable mcds in probabilistic
  wireless networks.
\newblock In Cheng, Y., Eun, D.~Y., Qin, Z., Song, M., and Xing, K., editors,
  {\em Wireless Algorithms, Systems, and Applications}, pages 96--107, Berlin,
  Heidelberg. Springer Berlin Heidelberg.

\bibitem[Hooker, 2022]{hooker2022}
Hooker, J.~N. (2022).
\newblock Stochastic decision diagrams.
\newblock In {\em CPAIOR Proceedings}.

\bibitem[Hooker and Ottosson, 2003]{HookerLBBD}
Hooker, J.~N. and Ottosson, G. (2003).
\newblock Logic-based {B}enders decomposition.
\newblock {\em Mathematical Programming}, 96(1):33--60.

\bibitem[Hutson and Shier, 2006]{Hutson_mst}
Hutson, K.~R. and Shier, D.~R. (2006).
\newblock Minimum spanning trees in networks with varying edge weights.
\newblock {\em Annals of Operations Research}, 146(1):3--18.

\bibitem[Jovanovic et~al., 2010]{jovanovic2010ant}
Jovanovic, R., Tuba, M., and Simian, D. (2010).
\newblock Ant colony optimization applied to minimum weight dominating set
  problem.
\newblock In {\em Proceedings of the 12th WSEAS International Conference
  Control, Modeling and Simulation on Automatic Control, Modelling \&
  Simulation}, pages 322--326.

\bibitem[Kempe et~al., 2003]{kempe2003max}
Kempe, D., Kleinberg, J., and Tardos, {\'E}. (2003).
\newblock Maximizing the spread of influence through a social network.
\newblock In {\em Proceedings of the Ninth ACM SIGKDD International Conference
  on Knowledge Discovery and Data Mining}, pages 137--146.

\bibitem[Laporte and Louveaux, 1993]{LAPORTE1993}
Laporte, G. and Louveaux, F.~V. (1993).
\newblock The integer {L}-shaped method for stochastic integer programs with
  complete recourse.
\newblock {\em Operations Research Letters}, 13(3):133--142.

\bibitem[Li and Grossmann, 2019]{li2019nlp}
Li, C. and Grossmann, I.~E. (2019).
\newblock A generalized {B}enders decomposition-based branch and cut algorithm
  for two-stage stochastic programs with nonconvex constraints and mixed-binary
  first and second stage variables.
\newblock {\em Journal of Global Optimization}, 75(2):247--272.

\bibitem[Lin et~al., 2018]{lin2018}
Lin, G., Guan, J., and Feng, H. (2018).
\newblock An {ILP} based memetic algorithm for finding minimum positive
  influence dominating sets in social networks.
\newblock {\em Physica A: Statistical Mechanics and its Applications},
  500:199--209.

\bibitem[Lozano and Smith, 2022]{lozano2018bdd}
Lozano, L. and Smith, J.~C. (2022).
\newblock A binary decision diagram based algorithm for solving a class of
  binary two-stage stochastic programs.
\newblock {\em Mathematical Programming}, 191(1):381--404.

\bibitem[Lulli and Sen, 2004]{lulli2004branch}
Lulli, G. and Sen, S. (2004).
\newblock A branch-and-price algorithm for multistage stochastic integer
  programming with application to stochastic batch-sizing problems.
\newblock {\em Management Science}, 50(6):786--796.

\bibitem[Noyan, 2012]{Noyan2012}
Noyan, N. (2012).
\newblock Risk-averse two-stage stochastic programming with an application to
  disaster management.
\newblock {\em Computers \& Operations Research}, 39(3):541--559.

\bibitem[Ntaimo, 2010]{Ntaimo2010disjunctive}
Ntaimo, L. (2010).
\newblock Disjunctive decomposition for two-stage stochastic mixed-binary
  programs with random recourse.
\newblock {\em Operations Research}, 58(1):229--243.

\bibitem[Ntaimo, 2013]{Ntaimo2013Fenchel}
Ntaimo, L. (2013).
\newblock Fenchel decomposition for stochastic mixed-integer programming.
\newblock {\em Journal of Global Optimization}, 55(1):141--163.

\bibitem[Ntaimo and Sen, 2007]{Ntaimo2007}
Ntaimo, L. and Sen, S. (2007).
\newblock A branch-and-cut algorithm for two-stage stochastic mixed-binary
  programs with continuous first-stage variables.
\newblock {\em International Journal of Computational Science and Engineering},
  3(3):232--241.

\bibitem[Ntaimo and Tanner, 2008]{Ntaimo2008}
Ntaimo, L. and Tanner, M.~W. (2008).
\newblock Computations with disjunctive cuts for two-stage stochastic mixed 0-1
  integer programs.
\newblock {\em Journal of Global Optimization}, 41(3):365--384.

\bibitem[Qi and Sen, 2017]{Qi2017}
Qi, Y. and Sen, S. (2017).
\newblock The ancestral {B}enders' cutting plane algorithm with multi-term
  disjunctions for mixed-integer recourse decisions in stochastic programming.
\newblock {\em Mathematical Programming}, 161(1):193--235.

\bibitem[Ravi and Sinha, 2006]{ravi2006uncert}
Ravi, R. and Sinha, A. (2006).
\newblock Hedging uncertainty: Approximation algorithms for stochastic
  optimization problems.
\newblock {\em Mathematical Programming}, 108(1):97--114.

\bibitem[Rockafellar and Uryasev, 2000]{rockafellar2000optimization}
Rockafellar, R.~T. and Uryasev, S. (2000).
\newblock Optimization of conditional value-at-risk.
\newblock {\em Journal of Risk}, 2:21--42.

\bibitem[Rockafellar and Uryasev, 2002]{rockafellar2002conditional}
Rockafellar, R.~T. and Uryasev, S. (2002).
\newblock Conditional value-at-risk for general loss distributions.
\newblock {\em Journal of banking \& finance}, 26(7):1443--1471.

\bibitem[Santos et~al., 2009]{santos2009}
Santos, A., Bendali, F., Mailfert, J., Duhamel, C., and Hou, K.-M. (2009).
\newblock Heuristics for designing energy-efficient wireless sensor network
  topologies.
\newblock {\em Journal of Networks}, 4(6):436--444.

\bibitem[Schultz et~al., 1998]{schultz1998solving}
Schultz, R., Stougie, L., and Van Der~Vlerk, M.~H. (1998).
\newblock Solving stochastic programs with integer recourse by enumeration: A
  framework using {G}r{\"o}bner basis.
\newblock {\em Mathematical Programming}, 83(1):229--252.

\bibitem[Schultz and Tiedemann, 2006]{schultz2006}
Schultz, R. and Tiedemann, S. (2006).
\newblock Conditional value-at-risk in stochastic programs with mixed-integer
  recourse.
\newblock {\em Mathematical Programming}, 105(2):365--386.

\bibitem[Sen and Higle, 2005]{Sen2005}
Sen, S. and Higle, J.~L. (2005).
\newblock The {C$^{3}$} theorem and a {D$^{2}$} algorithm for large scale
  stochastic mixed-integer programming: Set convexification.
\newblock {\em Mathematical Programming}, 104(1):1--20.

\bibitem[Sen and Sherali, 2006]{Sen2006}
Sen, S. and Sherali, H.~D. (2006).
\newblock Decomposition with branch-and-cut approaches for two-stage stochastic
  mixed-integer programming.
\newblock {\em Mathematical Programming}, 106(2):203--223.

\bibitem[Serra et~al., 2019]{serra2019lastmile}
Serra, T., Raghunathan, A.~U., Bergman, D., Hooker, J., and Kobori, S. (2019).
\newblock Last-mile scheduling under uncertainty.
\newblock In Rousseau, L.-M. and Stergiou, K., editors, {\em Integration of
  Constraint Programming, Artificial Intelligence, and Operations Research},
  pages 519--528. Springer International Publishing.

\bibitem[Sherali and Fraticelli, 2002]{Sherali2002}
Sherali, H.~D. and Fraticelli, B. M.~P. (2002).
\newblock A modification of {B}enders' decomposition algorithm for discrete
  subproblems: An approach for stochastic programs with integer recourse.
\newblock {\em Journal of Global Optimization}, 22(1):319--342.

\bibitem[Sherali and Zhu, 2006]{Sherali2006}
Sherali, H.~D. and Zhu, X. (2006).
\newblock On solving discrete two-stage stochastic programs having
  mixed-integer first- and second-stage variables.
\newblock {\em Mathematical Programming}, 108(2):597--616.

\bibitem[Takaguchi et~al., 2014]{PhysRevE.90.012807}
Takaguchi, T., Hasegawa, T., and Yoshida, Y. (2014).
\newblock Suppressing epidemics on networks by exploiting observer nodes.
\newblock {\em Physical Review E}, 90:012807.

\bibitem[Torkestani and Meybodi, 2012]{torkestani2012}
Torkestani, J.~A. and Meybodi, M.~R. (2012).
\newblock Finding minimum weight connected dominating set in stochastic graph
  based on learning automata.
\newblock {\em Information Sciences}, 200:57--77.

\bibitem[van Beesten and Romeijnders, 2020]{vanbeesten2020}
van Beesten, E.~R. and Romeijnders, W. (2020).
\newblock Convex approximations for two-stage mixed-integer mean-risk recourse
  models with conditional value-at-risk.
\newblock {\em Mathematical Programming}, 181(2):473--507.

\bibitem[van~der Laan and Romeijnders, 2023]{van2020converging}
van~der Laan, N. and Romeijnders, W. (2023).
\newblock A converging {B}enders' decomposition algorithm for two-stage
  mixed-integer recourse models.
\newblock {\em Operations Research}, page to appear.

\bibitem[Wegener, 2000]{wegener2000}
Wegener, I. (2000).
\newblock {\em Branching Programs and Binary Decision Diagrams}.
\newblock Society for Industrial and Applied Mathematics.

\bibitem[Wu and Li, 2001]{Li2001routing}
Wu, J. and Li, H. (2001).
\newblock A dominating-set-based routing scheme in ad hoc wireless networks.
\newblock {\em Telecommunication Systems}, 18(1):13--36.

\bibitem[Zhang and K\"{u}\c{c}\"{u}kyavuz, 2014]{Zhang2014}
Zhang, M. and K\"{u}\c{c}\"{u}kyavuz, S. (2014).
\newblock Finitely convergent decomposition algorithms for two-stage stochastic
  pure integer programs.
\newblock {\em SIAM Journal on Optimization}, 24(4):1933--1951.

\end{thebibliography}
}

\newpage

\appendix

\section{Stochastic Knapsack Problem} \label{app:stochknap}
Recall the stochastic knapsack problem with uncertain profit. We must first commit to procuring, i.e., producing ourselves or outsourcing, a subset of a set of items, $I$, with stochastic costs. These items have expected profit $p^0_i$ and are subject to a first-stage knapsack constraint with capacity $C^0$.
We then observe a profit degradation  $d_i$, and in the second stage we have three recourse options for each item.
Firstly, we can choose to produce the item at the reduced profit $p_i = p^0_i - d_i$ and consume second-stage knapsack capacity $c_i$. 
Secondly, we can repair the item consuming some extra second-stage knapsack capacity, $c_i + t_i$, but recovering the original expected profit of $p_i^0$, or finally, we can outsource the item so that the actual profit of the outsourced item is $p^0_i - f_i$. The items produced or repaired in the second stage are subject to knapsack constraint with capacity $C$.

Let $x_i = 1$ if we select item $i$ to procure in the first stage, and in the second stage let $y_i = 1$ if we select item $i$ to produce ourselves and let $r_i = 1$ if we choose to repair item $i$. The stochastic knapsack problem can be formulated as follows
\bsubeq	
\begin{alignat}{2}
	\max \ & \sum_{i \in I} (p^0_i - f_i) x_i + \expect[	\mathcal{Q}(x, \omega)]\\
	\text{s.t.} \ & \sum_{i \in I}  c_i^0 x_i \leq C^0\\ 
	&  {x} \in  \{0,1\}^{|I|}
\end{alignat}%
\esubeq
where
\bsubeq	
\begin{alignat}{2}
	\mathcal{Q}(x, \omega) = 	\max \ &  \sum_{i \in I} (f_i - d_i)y_i + d_ir_i  \\
	\text{s.t.} \ &  \sum_{i \in I} c_iy_i + t_ir_i \leq C\\
	& y_i \leq {x}_i && { \forall i \in I}\\
	& r_i \leq y_i &&   { \forall i \in I}\\
	&   y \in  \{0,1\}^{|I|} , \  r \in  \{0,1\}^{|I|}. \qquad
\end{alignat}%
\esubeq

\section{Benders Algorithm Overview} \label{app:bendersoverview}

We remark that the algorithm outlined here is a pure cutting plane version of the Benders algorithm, however it is generally implemented as a branch-and-cut algorithm as seen in Section 7.

We denote the lower bound on  $\eta_\omega$ as $\LB^\omega$ and the lower and upper bounds on the subproblem objective at iteration $k$ as $\LB_k$ and $\UB_k$, respectively. Let the set of cuts in the master problem be $\cutset$ and the cuts found at a given iteration $k$ be $\cutset_k$ so that at iteration $k$, $\cutset = \cup_{i=0}^{k-1} \cutset_i$. We denote the BDD subproblems as $\SP(\mpsol, \omega)$.
\BE
\setlength\itemsep{0.1em}
\I[0.] Initialize: Set iteration counter $k=0$ and set the upper bound $\UB_0 = \infty$. Generate a BDD  $\BDD^\omega$ for each scenario $\omega \in \Omega$. Let $\cutset = \cutset_0 = \{\eta_\omega \geq \LB^\omega \ \forall \omega \in \Omega\}$. 
\I Set $k=k+1$ and solve $\MP(\cutset)$. If this problem is infeasible, whole problem is infeasible so terminate. Otherwise we have solution $(\hat{\xvar}, \mpsol, \hat{\eta})$, set $\LB_k$ to the objective value at this solution.
\I Solve each BDD subproblem $\SP(\mpsol, \omega)$ using the shortest path algorithm for the dual solution and yielding objective value $\hat{\spobj}_\omega$. Calculate $\UB_{est} = {\xweights^\top \hat{\xvar}} + \sum_{\omega \in \Omega} p_\omega \hat{\spobj}_\omega$. If $\UB_{est} < \UB_{k-1}$, set $\UB_k = \UB_{est}$ and update the incumbent solution, otherwise $\UB_k = \UB_{k-1}$.
\I If $\LB_k = \UB_k$ accept the incumbent as an optimal solution and terminate. Otherwise, use the dual solutions from Step 2 to get Benders cuts $\cutset_k$. Update $\cutset = \cutset \cup \cutset_k $. Go to Step 1. 
\EE

\section{BDD Recourse Problem Formulations} \label{app:bddforms}

\subsection{\bdd{2} Recourse Problems}
	\bsubeq\label{form:BDD1_primal_a}
	\begin{alignat}{2}
		\Primal{2}: \min \ & \sum_{a  \in \onearcs} f_a\secondstagecost{v(a)} \\
		\text{s.t.} \ &  \sum_{a = (n,k) \in \arcs^\omega}f_a - \sum_{a = (k,n) \in \arcs^\omega} f_a = 0 \quad &&  \forall n \in \nodes^\omega \setminus \{\rootnode, \term\} \label{eq:flowbal1_a} \\ 
		&  \sum_{a = (\rootnode,j)\in \arcs^\omega}f_a = 1  \\ 
		& \sum_{a = (j,\term) \in \arcs^\omega} f_a =  1  \label{eq:flowbal3_a}\\ 
		& f_a \leq 1 - \mpsol^\omega_i && \forall \ a \in \arcs^\omega , i \in \viol_a \label{eq:capconstr_}\\ 
		&  f_a \geq 0 && \forall \ a \in \arcs^\omega 
	\end{alignat}
	\esubeq

	\bsubeq\label{form:BDD1_dual_a}
	\begin{alignat}{2}
		\Dual{2}: \max \ & \pi_\rootnode - \pi_\term  - \sum_{a  \in \arcs^\omega} \sum_{i \in \viol_a} \beta_{ai}(1 - \mpsol^\omega_i) \\
		\text{s.t.} \ & \pi_u - \pi_v -   \sum_{i \in \viol_a} \beta_{ai} \leq 0 \qquad &&  \forall a=(u,v) \in \zeroarcs \\ 
		&\pi_u - \pi_v -   \sum_{i \in \viol_a} \beta_{ai} \leq\secondstagecost{\map(a)} \quad  &&  \forall a=(u,v) \in\onearcs\\ 
		&  \beta_{ai} \geq 0  && \forall \ a \in \arcs^\omega, \forall i \in \viol_a.
	\end{alignat}
	\esubeq

\subsection{\bdd{3} Recourse Problems}
\bsubeq\label{form:BDD2_primal1_a}
\begin{alignat}{2}
	\Primal{3}: \min \ & \sum_{a \in \onearcs} f_a\left(\costone{\map(a)} (1-\mpsol^\omega_{\map(a)})+ \costtwo{\map(a)}\right) \\
	\text{s.t.} \ &  \sum_{a = (n,k) \in \arcs^\omega}f_a - \sum_{a = (k,n) \in \arcs^\omega} f_a = 0 \quad &&  \forall n \in \nodes^\omega \setminus \{\rootnode, \term\} \label{eq:P1_flowbal1_a} \\ 
	&  \sum_{a = (\rootnode,j)\in \arcs^\omega}f_a = 1  \\ 
	& \sum_{a = (j,\term) \in \arcs^\omega} f_a =  1  \label{eq:P1_flowbal3_a}\\ 
	&  f_a \geq 0 && \forall \ a \in \arcs^\omega \label{eq:P1_bds_a}
\end{alignat}%
\esubeq%

\bsubeq\label{form:BDD2_dual1_a}
\begin{alignat}{2} 
	\Dual{3}: 	\max \ & \pi_\rootnode -\pi_\term  \\
	\text{s.t.} \ & \pi_i - \pi_j \leq 0 && \forall (i,j) \in \zeroarcs \\ 
	&  \pi_i - \pi_j \leq\left(\costone{\map(a)} (1-\mpsol^\omega_{\map(a)})+ \costtwo{\map(a)}\right)  \quad && \forall a =(i,j) \in \onearcs
\end{alignat}
\esubeq

	\bsubeq\label{form:BDD2_primal2_a}
	\begin{alignat}{2}
		\AltPrimal{3}: 	\min \ & \sum_{a \in \onearcs}  \left(\costone{\map(a)} + \costtwo{\map(a)}\right)f_a + \sum_{a \in \onearcs} \costtwo{\map(a)}\gamma_a\\
		\text{s.t.} \ & \sum_{a=(\rootnode,j) \in \arcs^\omega} f_a + \sum_{a =(\rootnode,j) \in \onearcs}  \gamma_a  = 1 \label{eq:P2_flow1_a} \\ 
		&\sum_{a =(i,\term) \in \arcs^\omega} f_a + \sum_{a =(i,\term) \in \onearcs} \gamma_a  = 1 \\ 
		& \sum_{\substack{a =(i,j)\\ \in \arcs^\omega}} f_a + \sum_{\substack{a =(i,j)\\ \in \onearcs}}\gamma_a =   \sum_{\substack{a =(j,i)\\ \in \arcs^\omega}} f_a + \sum_{\substack{a =(j,i)\\ \in \onearcs}} \gamma_a && \forall i \in \nodes \label{eq:P2_flow2_a}\\
		& \gamma_a  \leq \mpsol^\omega_{\map(a)} && \hspace*{-0.04cm} \forall a \in \onearcs \label{eq:P2_gammaUB_a} \\
		& \gamma_a  \geq 0 \quad && \hspace*{-0.04cm}   \forall a \in \onearcs \label{eq:P2_gammaLB_a}  \\
		& f_a \geq 0 \quad && \forall a\in \arcs^\omega \label{eq:P2_fLB_a} 
	\end{alignat}
	\esubeq

	\bsubeq\label{form:BDD2_dual2_a}
\begin{alignat}{2}
	\AltDual{3}: 	\max \ & \pi_\rootnode  - \pi_\term  - \sum_{a  \in \onearcs} z_a\mpsol^\omega_{\map(a)}  \\
	\text{s.t.} \ & \pi_i - \pi_j \leq 0 && \forall (i,j) \in \zeroarcs \\ 
	&  \pi_i - \pi_j \leq \costone{\map(a)} + \costtwo{\map(a)} \quad && \forall a =(i,j) \in \onearcs \label{eq:D2_piUB_a} \\
	&  \pi_i - \pi_j \leq z_a + \costtwo{\map(a)}\quad && \forall a =(i,j) \in \onearcs \label{eq:D2_pizUB_a} \\
	& z_a \geq 0 \quad && \forall a =(i,j) \in \onearcs \label{eq:D2_zbd_a} 
\end{alignat}%
\esubeq

\section{Proofs of Propositions} \label{app:proofs}

\begin{proposition} \label{lemma:form_equiv_a}
	Given  a scenario $\omega$ and binary first-stage decisions $\mpsol^\omega$,  formulations $\Primal{3}$ and $\AltPrimal{3}$ have the same optimal objective value. 
\end{proposition}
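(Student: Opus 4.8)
We have two formulations of a min-cost network flow problem over a BDD. $\Primal{3}$ has a single flow variable $f_a$ per arc, with the cost on each 1-arc being $\costone{\map(a)}(1-\mpsol^\omega_{\map(a)}) + \costtwo{\map(a)}$ — i.e., the first-stage solution modulates the cost directly. $\AltPrimal{3}$ splits the flow on each 1-arc into two variables $f_a$ and $\gamma_a$, where $\gamma_a$ is capped at $\mpsol^\omega_{\map(a)}$ and carries only the reduced cost $\costtwo{\map(a)}$, while $f_a$ carries the full cost $\costone{\map(a)} + \costtwo{\map(a)}$. The claim is that for binary $\mpsol^\omega$ the two have equal optimal value.

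Let me think about why this should be true. The key idea is that $\gamma_a$ is the "discounted" flow: flow that uses the discount available when $\mpsol^\omega_{\map(a)} = 1$. The constraint $\gamma_a \le \mpsol^\omega_{\map(a)}$ means that when $\mpsol^\omega_{\map(a)} = 0$, no discounted flow is allowed (so $\gamma_a = 0$, and all flow on the arc must be the expensive $f_a$ at cost $\costone{} + \costtwo{}$), and when $\mpsol^\omega_{\map(a)} = 1$, up to 1 unit of flow can be discounted (at cost $\costtwo{}$). The total flow on an arc is $f_a + \gamma_a$ for 1-arcs.

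**The proof strategy: two-directional construction.**

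The natural approach is to show both $\spobj_1^* \ge \spobj_2^*$ and $\spobj_1^* \le \spobj_2^*$ by explicitly transforming optimal solutions back and forth. This is exactly the approach hinted at in the commented-out proof, and it's the right one. Let me sketch the two directions.

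Direction 1 ($\spobj_1^* \ge \spobj_2^*$): Take an optimal $\hat{f}^1$ for $\Primal{3}$. Define $\gamma_a^2 = \hat{f}_a^1 \mpsol^\omega_{\map(a)}$ and $f_a^2 = \hat{f}_a^1 - \gamma_a^2$ for 1-arcs (and $f_a^2 = \hat{f}_a^1$ for 0-arcs). Since total flow $f_a^2 + \gamma_a^2 = \hat{f}_a^1$ on each arc, flow balance is inherited. For the bound $\gamma_a^2 \le \mpsol^\omega_{\map(a)}$: when $\mpsol^\omega = 0$ we get $\gamma_a^2 = 0$; when $\mpsol^\omega = 1$ we get $\gamma_a^2 = \hat{f}_a^1 \le 1$ (using the implicit unit upper bound on flow at optimality, since it's a unit $\rootnode$-$\term$ flow). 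For the objective: when $\mpsol^\omega = 0$, the cost terms match since $\gamma_a^2 = 0$ and the $\Primal{3}$ cost is $\costone{} + \costtwo{}$; when $\mpsol^\omega = 1$, the $\Primal{3}$ cost is just $\costtwo{}\hat{f}_a^1$, and in $\AltPrimal{3}$ we get $\costtwo{}\gamma_a^2 = \costtwo{}\hat{f}_a^1$ with no contribution from $f_a^2 = 0$. So objectives agree, giving $\spobj_1^* \ge \spobj_2^*$.

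Direction 2 ($\spobj_1^* \le \spobj_2^*$): Take an optimal $(\hat{f}^2, \hat{\gamma}^2)$ for $\AltPrimal{3}$, and set $f_a^1 = \hat{f}_a^2 + \hat{\gamma}_a^2$ on 1-arcs, $f_a^1 = \hat{f}_a^2$ on 0-arcs. Flow balance follows by adding the two flows. The objective comparison again splits on the binary value of $\mpsol^\omega_{\map(a)}$; the crucial observation, using $\costone{} > 0$ (or $\ge 0$ — one must check the exact hypothesis stated) and minimization, is that at optimality the solver prefers $\gamma_a^2$ over $f_a^2$ whenever the discount is available, so the objective terms match the $\Primal{3}$ cost exactly.

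**The main obstacle.**

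The delicate point — and where I would focus most care — is the reliance on \emph{binary} $\mpsol^\omega$ (as the paper explicitly flags). In Direction 1, the construction $\gamma_a^2 = \hat{f}_a^1 \mpsol^\omega_{\map(a)}$ only cleanly gives a 0/1-type split because $\mpsol^\omega_{\map(a)} \in \{0,1\}$; for fractional values the split would not respect the cost equivalence, since $\Primal{3}$'s cost $\costone{}(1-\mpsol^\omega)$ is \emph{linear} in $\mpsol^\omega$ while $\AltPrimal{3}$ caps $\gamma_a$ at $\mpsol^\omega$ and charges $\costtwo{}$ on it — these only coincide at the endpoints. I would therefore state the binary assumption up front and verify carefully in each direction that the objective terms coincide arc-by-arc, treating the two cases $\mpsol^\omega_{\map(a)} = 0$ and $\mpsol^\omega_{\map(a)} = 1$ separately. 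A secondary subtlety is justifying the implicit unit upper bound on $\hat{f}_a^1$ (needed for $\gamma_a^2 \le 1$) and arguing that at an optimum of $\AltPrimal{3}$ the discounted flow is saturated before the expensive flow is used; both follow from the unit-flow structure and nonnegativity of costs, but should be stated explicitly rather than assumed.
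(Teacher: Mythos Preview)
Your proposal is correct and matches the paper's proof essentially line-for-line: the same two-directional construction (setting $\gamma_a^2 = \hat{f}_a^1\mpsol^\omega_{\map(a)}$, $f_a^2 = \hat{f}_a^1 - \gamma_a^2$ in one direction and $f_a^1 = \hat{f}_a^2 + \hat{\gamma}_a^2$ in the other), the same case split on the binary value of $\mpsol^\omega_{\map(a)}$, and the same appeals to the unit-flow bound and to $\costone{}>0$ forcing the discounted variable to be saturated at optimality. The subtleties you flag (binarity of $\mpsol^\omega$, the implicit unit cap on flow, the optimality argument for preferring $\gamma$ over $f$) are exactly the ones the paper handles, so there is nothing to add.
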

\begin{proof}{Proof.} 
	Denote an optimal solution to $\Primal{3}$ as ${\hat{f}}^1$  and let $\spobj_1^*$ denote the optimal objective value. Similarly, denote an optimal solution to $\AltPrimal{3}$ as $({\hat{f}}^2, {\hat{\gamma}}^2)$  and let $\spobj_2^*$ denote the optimal objective value.
	\BE[leftmargin=\parindent]
	\I 	Given ${\hat{f}}^1$, let $\gamma_a^2 = \hat{f}_a^1\mpsol^\omega_{\map(a)} $ and $f_a^2 = \hat{f}_a^1 - \gamma_a^2 $ for all $a \in \onearcs$ and let $f_a^2 = \hat{f}_a^1 $ for all $ a \in \zeroarcs$. 
	We will first show that the constructed solution $({{f}}^2, {{\gamma}}^2)$  is feasible to \eqref{form:BDD2_primal2_a}. 
	For each arc $a \in \zeroarcs$, we have flow $f_a^2 = \hat{f}_a^1$.
	For each arc $a \in \onearcs$, we have if $\mpsol^\omega_{\map(a)} =0$ then $\gamma_a^2 = 0$ and $f_a^2 = \hat{f}_a^1 $, otherwise, if $\mpsol^\omega_{\map(a)} =1$ then $\gamma_a^2 = \hat{f}_a^1 $ and $f_a^2 = 0$. In either case the flow on each $a \in \onearcs$ is $f_a^2 + \gamma_a^2 = \hat{f}_a^1$. Therefore any solution feasible to \eqref{form:BDD2_primal1_a}  is also feasible to \eqref{eq:P2_flow1_a}-\eqref{eq:P2_flow2_a}.
	Constraints \eqref{eq:P2_gammaUB_a} hold since by construction if $\mpsol^\omega_{\map(a)} = 0$ then $\gamma_a^2 = 0$, and there is an implicit upper bound of $1$ on ${f}_a^1$ at any optimal solution, therefore if $\mpsol^\omega_{\map(a)} = 1$,  $\gamma_a^2 \leq 1$ by construction. 
	Similarly, by constraint \eqref{eq:P1_bds_a}, $\hat{f}^1 \geq 0$ and $\mpsol^\omega_{\map(a)} \geq 0$, so  $\gamma_a^2  \geq 0$ and $f_a^2 \geq 0$ by construction, since $\mpsol^\omega_{\map(a)} \leq 1$,  and constraints \eqref{eq:P2_gammaLB_a} and \eqref{eq:P2_fLB_a} hold.
	Thus given the solution ${\hat{f}}^1$ we can construct a feasible solution to $\AltPrimal{3}$.

	We will now show that at the constructed solution $({{f}}^2, {{\gamma}}^2)$, the objective values of \eqref{form:BDD2_primal1_a} and \eqref{form:BDD2_primal2_a} are equal.
	The objectives of both \eqref{form:BDD2_primal1_a} and \eqref{form:BDD2_primal2_a} consider only arcs in $\onearcs$, so we compare terms for a fixed arc $a \in \onearcs$ at binary solutions of $\mpsol^\omega_{\map(a)}$.
	As before, if $\mpsol^\omega_{\map(a)} =0$, $\gamma_a^2 = 0$ by construction and $f_a^2 = \hat{f}_a^1$. 
	Thus the objective term in $\AltPrimal{3}$ at arc $a$ is $ (\costone{\map(a)}+ \costtwo{\map(a)} \hat{f}_a^1$, the same as in $\Primal{3}$.
	Again, by construction, if $\mpsol^\omega_{\map(a)} =1$ then $\gamma_a^2 = \hat{f}_a^1$  and  $f_a^2 = 0$.
	Thus, the objective term for arc $a$ in $\AltPrimal{3}$ is $ \costtwo{\map(a)}\hat{f}_a^1$, which is equal to the objective term of $\Primal{3}$ since we have $ f^1_a(1-\mpsol^\omega_{\map(a)} )\costone{\map(a)} =0$. The constructed solution to $\AltPrimal{3}$ yields the objective value of $\spobj_1^*$ and we conclude $\spobj_1^* \geq \spobj_2^*$.

	\I Given $({\hat{f}}^2, {\hat{\gamma}}^2)$, let $f_a^1 = \hat{f}_a^2 + \hat{\gamma}^2_a$ for all $a \in \onearcs$ and let $f_a^1 = \hat{f}_a^2$ for all $ a \in \zeroarcs$. 
	
	We will show the solution, $f^1$, is feasible to \eqref{form:BDD2_primal1_a}. For each arc $a \in \onearcs$, if $\mpsol^\omega_{\map(a)} =0$, by constraint \eqref{eq:P2_gammaUB_a} $\hat{\gamma}_a^2 = 0$, thus $f_a^1= \hat{f}_a^2$.
	If $\mpsol^\omega_{\map(a)} =1$ then $f_a^1 = \hat{f}_a^2 + \hat{\gamma}^2_a$ but at an optimal solution of $\AltPrimal{3}$ since $\costone{\map(a)} > 0$ by definition and we are minimizing we will have $\hat{\gamma}^2_a = 1$ and $\hat{f}_a^2 = 0$, thus $f_a^1 =  \hat{\gamma}^2_a$.
	For each arc $a \in \zeroarcs$, we have $f_a^1 = \hat{f}_a^2$. 
	This solution is feasible to constraints \eqref{eq:P1_flowbal1_a}-\eqref{eq:P1_flowbal3_a} since the flow on an arc is either given by $\hat{\gamma}^2_a$ or $\hat{f}^2_a$ and thus satisfy flow balance.
	Constraints \eqref{eq:P1_bds_a} hold since by \eqref{eq:P2_gammaLB_a} and \eqref{eq:P2_fLB_a}
	$\hat{f}_a^2 \geq 0$ and $\hat{\gamma}^2_a \geq 0$ so we have $f_a^1 \geq 0$ by construction.
	
	We will again compare objective terms for a fixed arc $a \in \onearcs$ at binary solutions of $\mpsol^\omega_{\map(a)}$. 
	As before, if $\mpsol^\omega_{\map(a)} =0$, $f_a^1= \hat{f}_a^2$ and the objective term for $\Primal{3}$ at $a$ is $  (\costone{\map(a)} + \costtwo{\map(a)})\hat{f}_a^2$, the same as in $\AltPrimal{3}$.
	If $\mpsol^\omega_{\map(a)} =1$, again, we have $f_a^1 =  \hat{\gamma}^2_a$.
	The objective term of $\AltPrimal{3}$ is $\costtwo{\map(a)}\hat{\gamma}^2_a$, which is equal to the objective term of $\Primal{3}$ since we have $ f^1_a(\costone{\map(a)} (1-\mpsol^\omega_{\map(a)})+ \costtwo{\map(a)}) = \hat{\gamma}^2_a\costtwo{\map(a)}$.
	
	The constructed solution to $\Primal{3}$ yields the objective value of $\spobj_2^*$ and we conclude $\spobj_1^* \leq \spobj_2^*$.
	
	\EE
	
	Therefore we have shown at a binary first-stage decision $\mpsol^\omega$, the formulations $\Primal{3}$ and $\AltPrimal{3}$ are have the same optimal objective value, as required.
	\end{proof}

\begin{proposition}
	Given a scenario $\omega$ and a binary first-stage solution $\mpsol^\omega$, $\AltPrimal{3}$ and the recourse problem of Problem 3 have the same optimal objective value.
\end{proposition}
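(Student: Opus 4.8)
The plan is to prove the claim by transitivity, chaining through the intermediate formulation $\Primal{3}$. Concretely, I would first argue that, for a fixed scenario $\omega$ and a fixed binary $\mpsol^\omega$, the minimum cost network flow problem $\Primal{3}$ is an exact reformulation of the recourse problem \eqref{eq:altprob_obj}--\eqref{eq:sp1_yconstr}, so that the two share the same optimal value. I would then invoke Proposition \ref{lemma:form_equiv}, which already establishes that $\Primal{3}$ and $\AltPrimal{3}$ have equal optimal objective values whenever $\mpsol^\omega$ is binary. Combining the two equalities yields the result. This parallels the equivalence argument already spelled out for $\Primal{2}$ and \eqref{eq:recourse2}--\eqref{eq:sp2_yconstr}.

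For the first step, the key observation is the BDD path correspondence underlying Algorithm \ref{alg:bdd2}. By construction $\BDD^\omega$ encodes $\mathcal{Y}(\omega)$: every $\rootnode$--$\term$ path assigns a value $\hat{y}_j \in \{0,1\}$ to each second-stage variable along its arc layers, and the set of such paths is in bijection with the feasible set $\mathcal{Y}(\omega)$. I would then verify that the cost of a path equals the objective of the corresponding recourse solution. Each arc $a \in \onearcs$ (which sets $y_{\map(a)}=1$) carries cost $\costone{\map(a)}(1-\mpsol^\omega_{\map(a)}) + \costtwo{\map(a)}$, while arcs in $\zeroarcs$ have zero cost, so summing along a path reproduces $\sum_{j}\big(\costone{j}(1-\mpsol^\omega_{j}) + \costtwo{j}\big)\hat{y}_j$. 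Since $\mpsol^\omega$ is binary and encodes the indicator $\indicator(\cond)$, this is exactly the objective \eqref{eq:altprob_obj} once the logical constraints \eqref{eq:sp1_logical} have zeroed out $\costone{\costindex}$ for every $i$ with $\indicator(\cond)=1$. Hence each feasible unit $\rootnode$--$\term$ flow in $\Primal{3}$ corresponds to a feasible recourse solution of equal cost, and conversely, so the two optimal values coincide.

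Finally, I would apply Proposition \ref{lemma:form_equiv} to equate the optimal values of $\Primal{3}$ and $\AltPrimal{3}$, and conclude by transitivity that $\AltPrimal{3}$ and \eqref{eq:altprob_obj}--\eqref{eq:sp1_yconstr} have the same optimal objective value. The main subtlety — the step I would treat most carefully — is the reliance on the binarity of $\mpsol^\omega$: it is needed both to identify the parametrized arc cost with the discounted recourse objective and, through Proposition \ref{lemma:form_equiv}, to guarantee that the auxiliary variables $\gamma$ in $\AltPrimal{3}$ select the correct discount at optimality. The equivalence is therefore not asserted for fractional $\mpsol^\omega$; the remaining path-cost bookkeeping is routine given the BDD construction.
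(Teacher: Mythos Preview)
Your proposal is correct and follows essentially the same approach as the paper: both argue the equivalence by transitivity through $\Primal{3}$, first using the $\rootnode$--$\term$ path correspondence of the \bdd{3} construction to identify $\Primal{3}$ with the recourse problem \eqref{eq:altprob_obj}--\eqref{eq:sp1_yconstr}, and then invoking Proposition~\ref{lemma:form_equiv} to equate $\Primal{3}$ and $\AltPrimal{3}$. Your write-up is in fact more explicit than the paper's about the arc-cost bookkeeping and the role of binarity of $\mpsol^\omega$, but the structure is the same.
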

\begin{proof}{Proof.}
	First observe that every $\rootnode$-$\term$ path in the BDD resulting from the \bdd{3} procedure corresponds to exactly one feasible solution with the same objective value as in second-stage of Problem 3. Every feasible solution in Problem 3's second-stage corresponds to exactly one $\rootnode$-$\term$ path in the BDD with the same length as the objective value. That is, $\Primal{3}$ is equivalent to the recourse problem of Problem 3.
	
	We also have by the proof of Proposition \ref{lemma:form_equiv}, given an optimal solution to 
	$\AltPrimal{3}$ we can construct an optimal solution to $\Primal{3}$ and vice versa. Therefore, by transitivity, we have $\AltPrimal{3}$ and the recourse problem of Problem 3  have the same optimal objective value as required. 
\end{proof}

\begin{proposition}  \label{prop:bdd1_lshape_a}
	Given a scenario $\omega$ and a master problem solution $\mpsol^\omega$, if the subproblem objective coefficients are non-negative, i.e.,  $\secondstagecost{i} \geq 0$ for all $i \in \{1,\hdots, \yvarindex\}$, then 	
	the strengthened \bdd{2} cuts 
	\begin{equation}  \label{eq:bdd1_cut_stren_a}
		\eta_\omega \geq \hat{\pi}_\rootnode - \sum_{j \in \{1, \hdots, m_1\}} \sum_{i \in \{1, \hdots, m_1\}} \betamax_{ji} (1 - \indvar_i)
	\end{equation} 
are at least as strong as the integer L-shaped cuts 
\begin{equation}\label{eq:lshape_cuts_a}
	\eta_\omega \geq  \hat{\spobj}_\omega -  \hat{\spobj}_\omega\left(\sum_{\substack{	i \in \{1,\hdots, m_1\}\\ \mpsol^\omega_i = 1}} (1-\indvar_i) + \sum_{\substack{	i \in \{1,\hdots, m_1\}\\ \mpsol^\omega_i = 0}}\indvar_i \right).
\end{equation}
\end{proposition}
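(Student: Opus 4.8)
The plan is to reduce the claim to a pointwise domination of right-hand sides: the cut $\eta_\omega \ge R_1(\indvar)$ is at least as strong as $\eta_\omega \ge R_2(\indvar)$ exactly when $R_1(\indvar)\ge R_2(\indvar)$ for every feasible indicator vector $\indvar\in[0,1]^{m_1}$, so it suffices to show the right-hand side of \eqref{eq:bdd1_cut_stren_a} dominates that of \eqref{eq:lshape_cuts_a}. First I would record two structural facts from the dual construction in Section \ref{subsec:bddbenderscuts}: $\hat{\beta}_{ai}>0$ can occur only on an arc lying on the shortest path and only for a single index $\bar{\imath}\in\viol_a$ with $\mpsol^\omega_{\bar{\imath}}=1$, so $\betamax_{ji}>0$ forces $\mpsol^\omega_i=1$, i.e.\ $(1-\mpsol^\omega_i)=0$. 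Second, by the established equivalence of $\Primal{2}$ with the recourse problem \eqref{eq:recourse2}--\eqref{eq:sp2_yconstr}, the root potential equals the L-shaped value, $\hat{\pi}_\rootnode=\hat{\spobj}_\omega$. Together these already show both cuts are tight at $\indvar=\mpsol^\omega$, and after substituting $\hat{\spobj}_\omega=\hat{\pi}_\rootnode$ the claim reduces to
\begin{equation*}
\sum_{i\in\{1,\hdots,m_1\}}\sum_{j\in\{1,\hdots,m_1\}}\betamax_{ji}(1-\indvar_i)
\le
\hat{\pi}_\rootnode\left(\sum_{i:\,\mpsol^\omega_i=1}(1-\indvar_i)+\sum_{i:\,\mpsol^\omega_i=0}\indvar_i\right).
\end{equation*}

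The core of the argument is the per-index bound $\sum_{j}\betamax_{ji}\le\hat{\pi}_\rootnode$. To prove it I would fix $i$ and observe that $\betamax_{ji}$ is nonzero only in layers whose shortest-path arc $a_j^\ast=(u_j,v_j)$ selects index $i$, where it equals $(\pi_{u_j}-\pi_{v_j})-\secondstagecost{\map(a_j^\ast)}$ for a one-arc and $\pi_{u_j}-\pi_{v_j}$ for a zero-arc. Since the shortest path visits exactly one arc per layer and the potential differences telescope to $\pi_\rootnode-\pi_\term=\hat{\pi}_\rootnode$, with each $\pi_{u_j}-\pi_{v_j}\ge0$ (all arc costs being nonnegative), summing these contributions over any subset of layers is at most $\hat{\pi}_\rootnode$; dropping the nonnegative terms $\secondstagecost{\map(a_j^\ast)}$ — this is exactly where the hypothesis $\secondstagecost{i}\ge0$ is used — only decreases the sum, yielding $\sum_{j}\betamax_{ji}\le\hat{\pi}_\rootnode$.

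With this bound I would assemble the inequality. The left-hand sum is supported on indices with $\mpsol^\omega_i=1$, on which $(1-\indvar_i)\ge0$, so it is at most $\hat{\pi}_\rootnode\sum_{i:\,\mpsol^\omega_i=1}(1-\indvar_i)$; adding the nonnegative slack $\hat{\pi}_\rootnode\sum_{i:\,\mpsol^\omega_i=0}\indvar_i$ (nonnegative since $\hat{\pi}_\rootnode\ge0$ and $\indvar_i\ge0$) recovers the right-hand side, and transitivity closes the proof. I expect the telescoping path-length bound to be the main obstacle: one must argue carefully that positive dual multipliers arise only on shortest-path arcs and only for one index per arc, so that summing $\betamax_{ji}$ over layers genuinely stays within the single telescoped total $\hat{\pi}_\rootnode$ rather than overcounting across arcs or across indices, and the nonnegativity of $\secondstagecost{}$ is precisely what permits discarding the cost contributions without reversing the inequality.
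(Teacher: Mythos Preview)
Your proposal takes essentially the same approach as the paper's proof: both reduce to a pointwise comparison of right-hand sides, invoke the structural facts $\betamax_{ji}>0\Rightarrow\mpsol^\omega_i=1$ and $\hat{\pi}_\rootnode=\hat{\spobj}_\omega$, establish the per-index bound $\sum_j\betamax_{ji}\le\hat{\pi}_\rootnode$ via a path-length/telescoping argument (using nonnegativity of $\secondstagecost{}$ to discard the arc-cost contributions), and then assemble the final inequality by adding the nonnegative slack term $\hat{\pi}_\rootnode\sum_{i:\mpsol^\omega_i=0}\indvar_i$. The paper's argument proceeds through exactly these steps in the same order, with the same identification of the telescoping bound as the crux.
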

\begin{proof}{Proof.} 
	Assume $\secondstagecost{i} \geq 0$ for all $i \in \{1,\hdots, \yvarindex\}$. We want to show the strengthened \bdd{2} cuts \eqref{eq:bdd1_cut_stren_a} are as strong as 
	integer L-shaped cuts \eqref{eq:lshape_cuts_a} for all $i \in \{1,\hdots, \yvarindex\}$, that is,
	\[ \hat{\pi}_\rootnode - \sum_{j \in \{1, \hdots, m_1\}} \sum_{i \in \{1, \hdots, m_1\}} \betamax_{ji} (1 - \indvar_i)
	\geq 
	 \hat{\spobj}_\omega  - 
	 \hat{\spobj}_\omega\left(\sum_{\substack{	i \in \{1,\hdots, m_1\}\\ \mpsol^\omega_i  = 1}} (1-\indvar_i) + \sum_{\substack{	i \in \{1,\hdots, m_1\}\\ \mpsol^\omega_i  = 0}}\indvar_i \right).
	\]
	As described in Section 5.1,  for each capacitated arc $a$, we have $\hat{\beta}_{ai} > 0  $ for exactly one $i \in \viol_a$ where $\mpsol^\omega_i=1$ and otherwise  $\hat{\beta}_{ai} = 0$. Therefore if $\betamax_{ji} > 0$, we must have $(1 - \mpsol^\omega_i)= 0$, and otherwise  $\betamax_{ji} = 0$, since $\betamax_{ji}  =  \max_{a \in \arclayer{j}}\{\hat{\beta}_{ai} : i \in \viol_a\}$. 
	As a result for all $j \in \{1, \hdots, m_1\}$ and $i \in \{1, \hdots, m_1\}$, we have 
	$ \betamax_{ji} (1 - \mpsol^\omega_i)= 0$ at an optimal solution. 
$ \hat{\spobj}_\omega$ is the optimal objective value to an equivalent subproblem to that of \bdd{2}, so we have $$ \hat{\pi}_\rootnode -\sum_{j \in \{1, \hdots, m_1\}} \sum_{i \in \{1, \hdots, m_1\}} \betamax_{ji} (1 - \mpsol^\omega_i)	= \hat{\pi}_\rootnode  =  \hat{\spobj}_\omega.$$ 
	Substituting $\hat{\pi}_\rootnode$ for $  \hat{\spobj}_\omega$, and swapping the summation indices, we would like to show
	\begin{equation}\label{eq:proof_bdd1_lshape_a}
		\sum_{i \in \{1, \hdots, m_1\}} \sum_{j \in \{1, \hdots, m_1\}} \betamax_{ji} (1 - \indvar_i)
		\leq 
		\hat{\pi}_\rootnode
		\left(\sum_{\substack{	i \in \{1,\hdots, m_1\}\\ \mpsol^\omega_i  = 1}} (1-\indvar_i) + \sum_{\substack{	i \in \{1,\hdots, m_1\}\\ \mpsol^\omega_i  = 0}}\indvar_i \right). 
	\end{equation}
	By definition of the strengthened cuts \eqref{eq:bdd1_cut_stren_a}, we select at most one arc $a = (u,v)$ per BDD layer $j$ and index $i$. 
	By construction of the solution, $\hat{\beta}_{ai} > 0$ for the index  $i \in \viol_a$ only when $\mpsol^\omega_i =1$. Therefore we also have $\betamax_{ji} > 0$ only when $\mpsol^\omega_i =1$.  Thus the outer left-hand term of \eqref{eq:proof_bdd1_lshape_a} and $\sum_{\substack{	i \in \{1,\hdots, m_1\}\\ \mpsol^\omega_i = 1}} (1-\indvar_i) $ sum over the same  $\indvar$ indices.

	Without loss of generality, fix $i = 1 $, for each BDD arc layer, $\arclayer{j}$, there exits $\bar{a} \in  \arclayer{j}$ such that $\hat{\beta}_{\bar{a}1} = \betamax_{j1}$. 
	The value of  $\hat{\beta}_{\bar{a}1} + \secondstagecost{{\map{(\bar{a})} } } = \pi_{\bar{u}} - \pi_{\bar{v} }$ is the path length increase over the arc $\bar{a}$
	since $ \secondstagecost{\map(a)} \geq 0$ for all $a \in \arcs$. 
	By construction of the cut, we will select at most one arc $\bar{a}$ per layer $ \arclayer{j}$, and  $\hat{\beta}_{\bar{a}1}  > 0$ only if $\bar{a}$ is on the shortest path and $\mpsol^\omega_1 = 1$.
	Thus, $\betamax_{j1}  + \secondstagecost{\bar{a}}$ is the path length increase over layer $j$.
	Since at most every violated index will exist on every layer, and $\hat{\pi}_{\rootnode}$ is the total path length increase over the entire BDD, we have  \[
	\sum_{j \in \{1, \hdots, m_1\}} \betamax_{j1} + \sum_{a \in \bar{A}} \secondstagecost{a} 
	\leq
	\hat{\pi}_\rootnode,\]
	where $\bar{A}$ is a set of one arc $a$ per layer $\arclayer{j}$ such $\hat{\beta}_{a1} = \betamax_{j1}$.
	We have $\secondstagecost{\map{(a)}} \geq 0$ by assumption, thus  
	$
	\sum_{j \in \{1, \hdots, m_1\}} \betamax_{ji}
	\leq
	\hat{\pi}_\rootnode$ for any $i$. Therefore we have 
	\[
	\sum_{i \in \{1, \hdots, m_1\}} \sum_{j \in \{1, \hdots, m_1\}} \betamax_{ji} (1 - \indvar_i)
	\leq
	\sum_{\substack{	i \in \{1,\hdots, m_1\}\\ \mpsol^\omega_i  = 1}}  \hat{\pi}_\rootnode(1-\indvar_i).
	\]
	By definition $1 \geq \indvar_i \geq 0$, and by construction of dual solutions and since $\secondstagecost{i} \geq 0$ by assumption,  $\hat{\pi}_{\rootnode} \geq0$ thus
	\[ 
	\sum_{\substack{	i \in \{1,\hdots, m_1\}\\ \mpsol^\omega_i  = 1}}  \hat{\pi}_\rootnode(1-\indvar_i) 
	\leq  
	\hat{\pi}_\rootnode
	\left(\sum_{\substack{	i \in \{1,\hdots, m_1\}\\ \mpsol^\omega_i  = 1}} (1-\indvar_i) + \sum_{\substack{	i \in \{1,\hdots, m_1\}\\ \mpsol^\omega_i  = 0}}\indvar_i \right). 
	\]
	
	Thus by transitivity the equation \eqref{eq:proof_bdd1_lshape_a} holds and we conclude  the cuts \eqref{eq:bdd1_cut_stren_a} are at least as strong as the cuts \eqref{eq:lshape_cuts_a} when $\secondstagecost{i} \geq 0$ for all $i \in \{1,\hdots, \yvarindex\}$.
\end{proof}


\begin{proposition} \label{prop:bdd2_lshape_a}
	Given a scenario $\omega$ and a master problem solution $\mpsol^\omega$, 
	if the subproblem objective coefficients are non-negative, i.e.,  $\costtwo{i}\geq 0$ for all $i \in \{1,\hdots, \yvarindex\}$, then 	
	the strengthened \bdd{3} cuts \begin{equation} \label{eq:bdd2_cut_stren_a}
		\eta_\omega \geq \hat{\pi}_\rootnode   - \sum_{j \in \{1, \hdots, m_1\}} \zmax_j \indvar_{j} .
	\end{equation}  are at least as strong as the integer L-shaped cuts \eqref{eq:lshape_cuts_a}.
\end{proposition}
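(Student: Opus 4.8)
The plan is to show that the right-hand side of the strengthened $\bdd{3}$ cut \eqref{eq:bdd2_cut_stren_a} dominates the right-hand side of the integer L-shaped cut \eqref{eq:lshape_cuts_a} pointwise over all relaxed indicator vectors $\indvar \in [0,1]^{m_1}$. Since both inequalities lower-bound the same variable $\eta_\omega$, this pointwise domination is precisely the statement that the $\bdd{3}$ cut is at least as strong.

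First I would invoke the equivalences already in hand: by Proposition \ref{lemma:form_equiv}, together with the subsequent proposition identifying $\AltPrimal{3}$ with the recourse problem, the optimal objective of the $\bdd{3}$ subproblem equals the optimal recourse value, so that $\hat{\pi}_\rootnode = \hat{\spobj}_\omega$ at the master solution $\mpsol^\omega$. Substituting this common value, the claim reduces to the single inequality
\[
\sum_{j \in \{1,\hdots,m_1\}} \zmax_j \indvar_j \;\leq\; \hat{\pi}_\rootnode\left(\sum_{\mpsol^\omega_i=1}(1-\indvar_i) + \sum_{\mpsol^\omega_i=0}\indvar_i\right).
\]

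The heart of the argument is the per-layer bound $0 \leq \zmax_j \leq \hat{\pi}_\rootnode$. Here I would use the dual interpretation from Section \ref{subsec:bddbenderscuts}: at optimality $\hat{z}_a + \costtwo{}$ is the increment in the shortest $u$-$\term$ path length across an arc $a=(u,v)$, which is non-negative because $\costtwo{}\geq 0$ by hypothesis and $\hat{z}_a \geq 0$ by \eqref{eq:D2_zbd}. A single layer's increment therefore cannot exceed the full root-to-terminal length $\hat{\pi}_\rootnode$, and since $m_1 = \yvarindex$ for $\bdd{3}$ the layer index and the constraint index coincide, yielding $\zmax_j \leq \hat{\pi}_\rootnode$. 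I would then restrict the summation to the indices that actually appear: when $\mpsol^\omega_j = 1$ the discount drives every $1$-arc cost in layer $j$ to $\costtwo{}$, forcing $\hat{z}_a = 0$ and hence $\zmax_j = 0$, so only layers with $\mpsol^\omega_j = 0$ contribute — exactly the index set of the second L-shaped sum. Combining $\zmax_j \leq \hat{\pi}_\rootnode$ with $\indvar_j \geq 0$ gives $\sum_j \zmax_j \indvar_j \leq \sum_{\mpsol^\omega_i=0}\hat{\pi}_\rootnode\,\indvar_i$, and because $\hat{\pi}_\rootnode \geq 0$ with each $\indvar_i \in [0,1]$, the non-negative terms $\hat{\pi}_\rootnode(1-\indvar_i)$ over $\mpsol^\omega_i=1$ may be appended freely to recover the required right-hand side; transitivity then closes the proof.

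The main obstacle is the per-layer bound in the third step. It rests on reading off the shortest-path dual values correctly and verifying that the layerwise increments telescope to $\hat{\pi}_\rootnode$, and on the identity $m_1 = \yvarindex$ which makes the two summations comparable index by index; the remaining manipulations are routine once this bound and the support restriction to $\{j : \mpsol^\omega_j = 0\}$ are in place.
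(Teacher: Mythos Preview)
Your proposal is correct and follows essentially the same approach as the paper's own proof: you reduce to the same key inequality via $\hat{\pi}_\rootnode = \hat{\spobj}_\omega$, establish the per-layer bound $0 \le \zmax_j \le \hat{\pi}_\rootnode$ from the shortest-path dual interpretation together with $\costtwo{}\ge 0$, observe that layers with $\mpsol^\omega_j = 1$ contribute nothing, and then pad with the non-negative $\hat{\pi}_\rootnode(1-\indvar_i)$ terms. The paper proceeds identically.
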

\begin{proof}{Proof.}
	Assume $\costtwo{i}\geq 0$ for all $i \in \{1,\hdots, \yvarindex\}$ and
	observe that by Proposition \ref{lemma:form_equiv}, we have $ \hat{\spobj}_\omega = \hat{\pi}_{\rootnode}$ at an optimal subproblem solution. Thus we would like to show:
	\begin{equation}\label{eq:bdd2_proof_a}
		\sum_{j \in \{1, \hdots, m_1\}} \zmax_j \indvar_{j} 
		\leq \hat{\pi}_{\rootnode}\left(\sum_{\substack{	i \in \{1,\hdots, m_1\}\\ \mpsol^\omega_i = 1}} (1-\indvar_i) + \sum_{\substack{i \in \{1,\hdots, m_1\}\\ \mpsol^\omega_i = 0}}\indvar_i \right). 
	\end{equation}  
	We remark that at an optimal subproblem solution, {\cred the change in path length over arc $a = (i,j)$ is $\hat{\pi}_i - \hat{\pi}_j  = \hat{z}_a + \costtwo{\map(a)}$}. 
	We have  $\costtwo{i} \geq 0$ for all $i \in \{1,\hdots, \yvarindex\}$ by assumption and $\hat{z}_a \geq 0$ by definition \eqref{eq:D2_zbd_a}, therefore $\hat{z}_a + \costtwo{\map(a)} \geq 0$ for all $a \in \arcs^\omega$ and $\hat{\pi}_i - \hat{\pi}_j \geq 0$ for all $(i,j) \in \arcs^\omega$.
	Thus we must have $ 0 \leq \hat{z}_a + \costtwo{\map(a)}   \leq \hat{\pi}_{\rootnode}$ since 
	{\cred by definition of the dual problem $\Dual{3}$,  we have $ \hat{\pi}_{\rootnode}  - \hat{\pi}_\term$ is the length of the shortest $\rootnode$-$\term$ path,  $\hat{\pi}_{\term} =0$ by assumption, and $\hat{\pi}_i - \hat{\pi}_j $ is the length of a single arc in that path.
	 }
	Therefore, since for all $j \in \{1,\hdots, m_1\}$ there exists an arc $a$ where $\zmax_j  = \hat{z}_a$, and $\costtwo{j}\geq 0$ by assumption,
	we must also have $\zmax_j   \leq \hat{\pi}_{\rootnode}$.

	By definition of the strengthened cuts \eqref{eq:bdd2_cut_stren_a} we know there is exactly one  $\hat{z}_a$ term per layer in the cut, i.e., $\zmax_j$. We also know that if $\mpsol^\omega_{\map(a)}  =1$ the cost of all arcs in a layer $j$, are $\costtwo{\map{(a)}}$, {\cred then $\hat{z}_a =0$ for all $a \in \arclayer{j}$ since by assumption all any arc with $\pi_i-\pi_j - \costtwo{\map{(a)}} \leq 0 $ sets ${z}_a =0$ and we have $\costtwo{\map{(a)}} \geq 0$ and $\pi_i \leq \pi_j  + \costtwo{\map{(a)}}$  by $\AltDual{3}$.} 
	
	Thus the only terms which appear in the \bdd{3} cut are those where $\mpsol^\omega_{\map(a)} = 0$, which allow $\hat{z}_a \geq 0$, and therefore $\zmax_j \geq 0$. Thus the summation on the left-hand side of \eqref{eq:bdd2_proof_a} is over the same terms as the summation on the right-hand side with  $\mpsol^\omega_{\map(a)} = 0$. Thus with $\zmax_j   \leq \hat{\pi}_{\rootnode}$  we have 
	\[ \sum_{j \in \{1, \hdots, m_1\}} \zmax_j \indvar_{j} 
	\leq \sum_{\substack{	i \in \{1,\hdots, m_1\}\\ \mpsol^\omega_i = 0}} \hat{\pi}_{\rootnode} \indvar_i.
	\]
	By definition $1 \geq \indvar_i \geq 0$, for all $i \in \{1,\hdots,m_1\}$ and we have shown $\hat{\pi}_{\rootnode} \geq0$ thus
	
	\[ \sum_{\substack{	i \in \{1,\hdots, m_1\}\\ \mpsol^\omega_i = 0}} \hat{\pi}_{\rootnode} \indvar_i \leq \sum_{\substack{	i \in \{1,\hdots, m_1\}\\ \mpsol^\omega_i = 1}} \hat{\pi}_{\rootnode}(1-\indvar_i) + \sum_{\substack{	i \in \{1,\hdots, m_1\}\\ \mpsol^\omega_i = 0}} \hat{\pi}_{\rootnode}\indvar_i 
	\]
	and by transitivity we have \eqref{eq:bdd2_proof_a}. Thus we conclude  if $\costtwo{i} \geq 0$ for all $i \in \{1,\hdots, \yvarindex\}$ then 	the cuts \eqref{eq:bdd2_cut_stren_a} are at least as strong as the cuts \eqref{eq:lshape_cuts_a}.
\end{proof}

\section{Cut Strength Comparison} \label{app:cuts}

Next we show the two kinds of BDD cuts and the PB cuts are incomparable. To do so we derive cuts at two master problem solutions using the SMWDS scenario in Figure 1.
First we fix $\MP$ solution $\hat{\xvar} = [0,0,0,0,0]$ and derive the following cuts:
\begin{equation} \label{eq:bdd1_ex1}
	\bdd{2}: \eta \geq 2 - (2\xvar_{0} + 3\xvar_{1} + 3\xvar_{2} + 3\xvar_{3} + 2\xvar_{4})
\end{equation}
\begin{equation} \label{eq:bdd2_ex1}
	\bdd{3}: \eta \geq 2 - (\xvar_{0} + \xvar_{1} + \xvar_{2} + \xvar_{3} + \xvar_{4})
\end{equation}
\begin{equation} \label{eq:pb_ex1}
	\text{PB}: \eta \geq 1.5 - (\xvar_{0} + \xvar_{1} + \xvar_{2} + \xvar_{3} + 0.5\xvar_{4}).
\end{equation}
Next we fix $\MP$ solution $\hat{\xvar} = [0,0,1,0,0]$ and derive the cuts from Example 5,
	\begin{equation} \label{eq:bdd1_cut_ex_str}
	\eta \geq 1 - (\xvar_{0} + \xvar_{1} + \xvar_{3} )
\end{equation}
\begin{equation} \label{eq:bdd2_cut_ex_str}
	\eta \geq 1 - (\xvar_{0} + \xvar_{1} + \xvar_{3} +  \xvar_{4})
\end{equation}

 and the following PB cut,
\begin{equation}  \label{eq:pb_ex2}
	\text{PB}: \eta \geq 1 - (\xvar_{0} + \xvar_{1} + \xvar_{3} ).
\end{equation}
In the following propositions we compare the strength of the derived cuts.

\begin{proposition} \label{lemma:bdd1_pb}
	The strengthened \bdd{2} cuts \eqref{eq:bdd1_cut_stren} are incomparable with the pure Benders cuts \begin{equation}\label{eq:pb_cuts_a}
		\eta_\omega \geq \mathcal{T}_{\mpsol^\omega}(\indvar).
	\end{equation}
\end{proposition}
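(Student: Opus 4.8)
The plan is to prove incomparability directly, by producing two witness points in the master-problem space: one separated by the strengthened \bdd{2} cut but not by the pure Benders cut, and a second separated by the pure Benders cut but not by the \bdd{2} cut. Since incomparability only requires that neither family dominates the other, it suffices to work with a single concrete instance of each cut; I would use the cuts \eqref{eq:bdd1_ex1} and \eqref{eq:pb_ex1} already derived for the single SMWDS scenario of Figure \ref{fig:smwds_graph} at the master solution $\hat{\xvar} = [0,0,0,0,0]$.

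For the first direction I would exhibit a point where \eqref{eq:bdd1_cut_stren} is the stronger inequality. Taking the fractional first-stage solution $\hat{\xvar} = [0.25,0,0,0,0]$ together with the estimator value $\hat{\eta} = 1.3$, the \bdd{2} cut \eqref{eq:bdd1_ex1} evaluates to the requirement $\eta \geq 1.5$, which the point violates, whereas the pure Benders cut \eqref{eq:pb_ex1} only requires $\eta \geq 1.25$, which the point satisfies. Hence this point is cut off by \eqref{eq:bdd1_cut_stren} but not by \eqref{eq:pb_cuts_a}, so the pure Benders cut does not dominate the \bdd{2} cut.

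For the reverse direction I would take $\hat{\xvar} = [0,0,0,0,1]$ with $\hat{\eta} = 0.5$. Now \eqref{eq:bdd1_ex1} reduces to $\eta \geq 0$, which the point satisfies, while \eqref{eq:pb_ex1} reduces to $\eta \geq 1$, which it violates. Thus this point is separated by the pure Benders cut but not by the \bdd{2} cut, so neither family dominates the other and the two cut families are incomparable.

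There is no real structural obstacle here: the argument is an explicit pair of counterexamples, and the only care required is to verify the arithmetic and to confirm that the chosen $(\hat{\xvar},\hat{\eta})$ points are admissible in the relaxed master problem, i.e.\ that $\hat{\xvar}$ may be fractional (as permitted in the relaxation) and that $\hat{\eta}$ is a free estimator variable. The one subtlety worth stating explicitly is that incomparability is a statement about the \emph{existence} of separating points in each direction, so a single pair of witnesses, one per direction, is logically sufficient to establish the claim.
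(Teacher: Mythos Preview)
Your proposal is correct and follows essentially the same approach as the paper's proof: both use the cuts \eqref{eq:bdd1_ex1} and \eqref{eq:pb_ex1} derived at $\hat{\xvar}=[0,0,0,0,0]$, and exhibit the identical witness points $(\hat{\xvar},\hat{\eta})=([0.25,0,0,0,0],1.3)$ and $([0,0,0,0,1],0.5)$ to show separation in each direction.
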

\begin{proof}{Proof.}
	First we show a point where \eqref{eq:bdd1_cut_stren}  dominates  \eqref{eq:pb_cuts}. Let $\hat{\xvar} = [0.25,0,0,0,0]$ and let $\hat{\eta} = 1.3$. The inequality \eqref{eq:bdd1_ex1} becomes $ \eta \geq 1.5$ which will cut off the current solution of $\hat{\eta} = 1.3$, however the inequality \eqref{eq:pb_ex1} becomes $\eta \geq 1.25$ which does not cut off the current solution.
	Next we show a point where \eqref{eq:pb_cuts} dominates  \eqref{eq:bdd1_cut_stren},  let $\hat{\xvar} = [0,0,0,0,1]$ and let $\hat{\eta} = 0.5$. The inequality \eqref{eq:bdd1_ex1} becomes $ \eta \geq 0$ which does not cut  off the current solution. The inequality  \eqref{eq:pb_ex1} becomes $\eta \geq 1$ which cuts off the solution as $\hat{\eta} = 0.5$.
	We conclude these cuts are incomparable.
\end{proof}

\begin{proposition} \label{lemma:bdd2_pb}
	The strengthened \bdd{3} cuts  \eqref{eq:bdd2_cut_stren} are incomparable with the pure Benders cuts \eqref{eq:pb_cuts}.
\end{proposition}
\begin{proof}{Proof.}
	Let $\hat{\xvar} = [1,0,0,0,0]$ and let $\hat{\eta} = 0.75$, then the inequality \eqref{eq:bdd2_ex1} becomes $\eta \geq 1$ and \eqref{eq:pb_ex1} becomes $\eta \geq 0.5$ Thus \eqref{eq:bdd2_ex1} cuts off the current solution $\hat{\eta} =0.75$ and \eqref{eq:pb_ex1}  does not.
	Let $\hat{\xvar} = [0,0,0,0,1]$ and let $\hat{\eta} = 0.5$, the inequality \eqref{eq:bdd2_cut_ex_str} becomes $\eta \geq 0$ which does not cut off the current solution and \eqref{eq:pb_ex2} becomes $\eta \geq 1$ which does.
	We conclude these cuts are incomparable.
\end{proof}

\begin{proposition} \label{lemma:bdd1_bdd2_a}
	The strengthened \bdd{2} cuts  \eqref{eq:bdd1_cut_stren} and the strengthened \bdd{3} cuts   \eqref{eq:bdd2_cut_stren} are incomparable.
\end{proposition}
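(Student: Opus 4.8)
The plan is to establish incomparability in the standard way used for Propositions \ref{lemma:bdd1_pb} and \ref{lemma:bdd2_pb}: exhibit one concrete situation in which a strengthened \bdd{3} cut separates a master point that the corresponding strengthened \bdd{2} cut leaves feasible, and a second situation in which the roles are reversed. Since both cut families are valid lower bounds on $\eta_\omega$ for the same recourse problem, it suffices to produce, for a single scenario, two witnessing pairs of cuts together with a test point at which exactly one cut of each pair is violated. I would reuse the concrete cuts already derived for the single SMWDS scenario of Figure \ref{fig:smwds_graph}: the \bdd{2} cut \eqref{eq:bdd1_ex1} and the \bdd{3} cut \eqref{eq:bdd2_ex1} obtained at $\hat{\xvar} = [0,0,0,0,0]$, together with the strengthened \bdd{2} cut \eqref{eq:bdd1_cut_ex_str} and the strengthened \bdd{3} cut \eqref{eq:bdd2_cut_ex_str} obtained at $\hat{\xvar} = [0,0,1,0,0]$.

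The key step is to evaluate all four cuts at the common test point $\hat{\xvar} = [0,0,0,0,1]$ with $\hat{\eta} = 0.5$. First I would check the pair derived at the origin: substituting $\hat{\xvar}$ into \eqref{eq:bdd1_ex1} yields $\eta \geq 0$, which does not separate $\hat{\eta} = 0.5$, whereas \eqref{eq:bdd2_ex1} yields $\eta \geq 1$, which does. This shows that the \bdd{3} family is not dominated by the \bdd{2} family. Keeping the same test point, I would then check the pair derived at $[0,0,1,0,0]$: cut \eqref{eq:bdd1_cut_ex_str} becomes $\eta \geq 1$, separating $\hat{\eta} = 0.5$, while cut \eqref{eq:bdd2_cut_ex_str} becomes $\eta \geq 0$, which does not. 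This shows that the \bdd{2} family is not dominated by the \bdd{3} family, and the two conclusions together give incomparability.

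The argument is essentially a verification, so the care required is conceptual rather than computational. The main point I would be explicit about is the definition being used: a cut family is declared at least as strong as another only when every separation achievable by the second is matched by the first, so that a single violated/non-violated pair in each direction genuinely rules out domination both ways. The only mild subtlety—that the two witnessing cuts within each direction originate from \emph{different} master solutions of the same scenario subproblem—is harmless here, because incomparability of the families requires merely the existence of \emph{some} cut of each type exhibiting the desired separation, not a pointwise comparison of cuts generated at a single iteration.

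I do not expect any real obstacle: the statement is witnessed by a finite computation on an instance the paper has already worked out, and the only thing worth flagging is that the conclusion concerns the cut \textbf{families} rather than two individually fixed inequalities, which is why reusing cuts from distinct master iterations is legitimate.
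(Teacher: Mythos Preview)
Your proposal is correct and follows essentially the same approach as the paper: both use the single SMWDS scenario of Figure~\ref{fig:smwds_graph}, the same four cuts \eqref{eq:bdd1_ex1}, \eqref{eq:bdd2_ex1}, \eqref{eq:bdd1_cut_ex_str}, \eqref{eq:bdd2_cut_ex_str}, and the same test point $\hat{\xvar}=[0,0,0,0,1]$, $\hat{\eta}=0.5$ to witness non-domination in each direction. Your added remark clarifying why cuts from different master iterations may be compared is a helpful gloss but does not alter the argument.
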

\begin{proof}{Proof.}
	Let $\hat{\xvar}= [0,0,0,0,1]$ and let $\hat{\eta} = 0.5$, the inequality \eqref{eq:bdd1_ex1} becomes $ \eta \geq 0$ which does not cut off the solution $\hat{\eta} = 0.5$. The inequality \eqref{eq:bdd2_ex1} becomes $\eta \geq 1$ which cuts off the current solution.
	Keeping the same $\hat{\xvar}$ and $\hat{\eta}$ solution, the inequality \eqref{eq:bdd1_cut_ex_str} becomes $\eta \geq 1$ which cuts off $\hat{\eta} = 0.5$. However, the inequality \eqref{eq:bdd2_cut_ex_str} becomes $\eta \geq 0$ which does not cut off the current solution.
	Thus we conclude these cuts are incomparable.
\end{proof}

Propositions \ref{lemma:bdd1_pb}, \ref{lemma:bdd2_pb}, and \ref{lemma:bdd1_bdd2_a} indicate that it may be beneficial to add cuts from multiple classes at each iteration. 

\section{CVaR Algorithm Overview} \label{app:cvar}

\BE
\I[0.] Initialize: Set iteration counter $k=0$ and set the upper bound $\UB_0 = \infty$. If using the BDD-based methods, generate a BDD  $\BDD^\omega$ for each scenario $\omega \in \Omega$ using either \bdd{2} or \bdd{3}. 

\I Solve the master problem and get optimal solution $(\hat{\xvar}, \mpsol, \hat{\eta}, \hat{\theta})$. Set $\LB_k$ to the objective value at this solution.

\I Solve each subproblem for the dual solution and yielding objective value ${\spobj}_\omega$. Set $k=k+1$.

\I \label{step:optcuts} For each $\omega \in \Omega$, if ${\spobj}_\omega > \hat{\eta}_\omega$ add the optimality cut 
\[	\eta_\omega \geq \hat{\pi}^k_\rootnode   - \sum_{j \in \{1, \hdots, m_1\}} \hat{z}^{\max k}_j \indvar_{j}  
\quad \forall \omega \in \Omega, k=1,\hdots,K \]
 using the subproblem solution.

\I Calculate the $\alpha$-quantile, ${\zeta}$, of the set of recourse problem realizations, and then calculate 
\[ \cvar({\spobj}_\omega, {\zeta})  =  {\zeta} + \frac{1}{1-\alpha}   [ \spobj_\omega-{\zeta}]_+. \]

\I \label{step:cvarcuts} For each $\omega \in \Omega$, if $ \cvar({\spobj}_\omega, {\zeta})   > \hat{\theta}_\omega$ add the CVaR optimality cuts
\bsubeq\label{form:cvar_mp_a}
\begin{alignat}{2}
	& \theta_\omega\geq \zeta^k + \frac{1}{1-\alpha}  \nu^{\omega k} \quad && \forall \omega \in \Omega, k=1,\hdots,K \label{eq:cvar_cut_a}\\
	& 		\nu^{\omega k} \geq \eta_\omega - \zeta^k \quad && \forall \omega \in \Omega, k=1,\hdots,K \label{eq:cvar_cut2_a}\\
	& 		\nu^{\omega k} \geq 0\quad && \forall \omega \in \Omega, k=1,\hdots,K \label{eq:cvar_nubd_a} \\
	& \zeta^k \in \R &&  k=1,\hdots,K \label{eq:cvar_zetabd_a}
\end{alignat}%
\esubeq
using the subproblem solution and introducing axillary variables.

\I If neither Step \ref{step:optcuts} nor Step \ref{step:cvarcuts} yield cuts, accept the incumbent as an optimal solution and terminate. Otherwise, go to Step 1. 
\EE

{\credrev \section{SMWDS Instance Generation} \label{sec:instancegen}
	
	We generate $200$ stochastic instances with $|\vertexset| \in \{30,35,40,45,50\}$ and edge density $\in \{0.2,0.4,0.6,0.8\}$. For each vertex set size and edge density pair we create $10$ instances with random seeds having the values from $0$ to $9$. Graphs are generated using the  \verb|gnm_random_graph| function of the Python package Networkx  \citep{SciPyProceedings_11}, and are then checked to ensure connectedness.  The first-stage vertex weights are randomly selected from the interval  $[20,70]$.
	We remark that this closely follows the structure of benchmarks from the deterministic MWDS literature \citep{jovanovic2010ant}. 
	For the second-stage graph and vertex weights we create a discrete distribution, similar to those seen in applications of graph problems uncertain edge weights \citep{Alexopoulos_statespace, Hutson_mst}.
	This distribution is composed of two weights randomly drawn from the interval $[20,70]$ and the weight $0$, each of which is randomly assigned a probability such that all probabilities sum to $1$. The inclusion of the weight $0$ is to model vertex failures in the second-stage problem, such that a vertex with weight $0$ is considered removed from the graph.

%

%
%

}
\section{Sample Average Approximation Analysis} \label{app:saa}

We conduct sample average approximation (SAA) analysis and conclude that to achieve a worst-case optimality gap below $1\%$, we must use at least $800$ scenarios and proceed to use $850$ scenarios in our experiments. Table \ref{table:saa} gives the $95\%$ confidence interval (CI) on the upper and lower bounds, respectively.

\begin{table}[H]\small \caption{\label{table:saa} SAA anaylsis for an instance with $30$ vertices and edge density $0.5$.  }
	\centering
	\begin{tabular}{l l l r}
		\toprule
		$|\Omega|$ & \multicolumn{1}{l}{95\% CI on} &  \multicolumn{1}{l}{95\% CI on}  & \multicolumn{1}{l}{Worst Case} \\
		&\multicolumn{1}{l}{Lower Bound}& \multicolumn{1}{l}{Upper Bound }& Optimality Gap (\%) \\
		\midrule
		10 & [34.147, 38.103] & [37.574, 37.634] & 10.21 \\
		25 & [35.509, 38.879] & [37.569, 37.629] & 5.97 \\
		50 & [35.761, 38.249] & [37.590, 37.650] & 5.28 \\
		100 & [36.666, 38.375] & [37.538, 37.591] & 2.52 \\
		250 & [36.789, 37.671] & [36.988, 37.040] & 0.68 \\
		500 & [36.731, 37.477] & [37.181, 37.236] & 1.37 \\
		600 & [36.664, 37.330] & [37.259, 37.301] & 1.74 \\
		750 & [36.810, 37.340] & [37.143, 37.211] & 1.09 \\
		800 & [36.846, 37.367] & [37.131, 37.205] & 0.98 \\
		850 & [36.865, 37.346] & [37.123, 37.183] & 0.86 \\
		\bottomrule
	\end{tabular}
\end{table}

{\credrev
\section{Sensitivity Analysis} \label{app:sens}
Here we present a brief sensitivity analysis of our algorithms. Using the procedure outlined in \ref{sec:instancegen}, we generated $80$ random graphs with $|\vertexset| \in \{30,35\}$,  edge density $\in \{0.2,0.4,0.6,0.8\}$ and random seeds having the values from $0$ to $9$. For each graph, we generated two weight distributions, one drawing weights from the interval $[0,10]$ and one from $[100,1000]$. We then generate $850$ scenarios per instance and compare the solution time of each instance on these two distributions. Figure \ref{fig:compare} shows the solution time comparison for each distribution on the \bdd{2} and \bdd{3} methods with and without PB cuts. The performance using the \bdd{3} methods is comparable for both distributions, with the $[0,10]$ weights being faster than the  $[100,1000]$ weights exactly half the time without PB cuts and in $39$ of the instances with PB cuts. When using the \bdd{2} methods  the results show a very slight preference for the distribution with larger vertex weights, when using PB cuts $38$ of the instances with $[0,10]$ weights are faster whereas without PB cuts only $34$ are faster.

\begin{figure}[h] 
	\centering

	\begin{subfigure}{0.5\textwidth}
	\includegraphics[width=\textwidth]{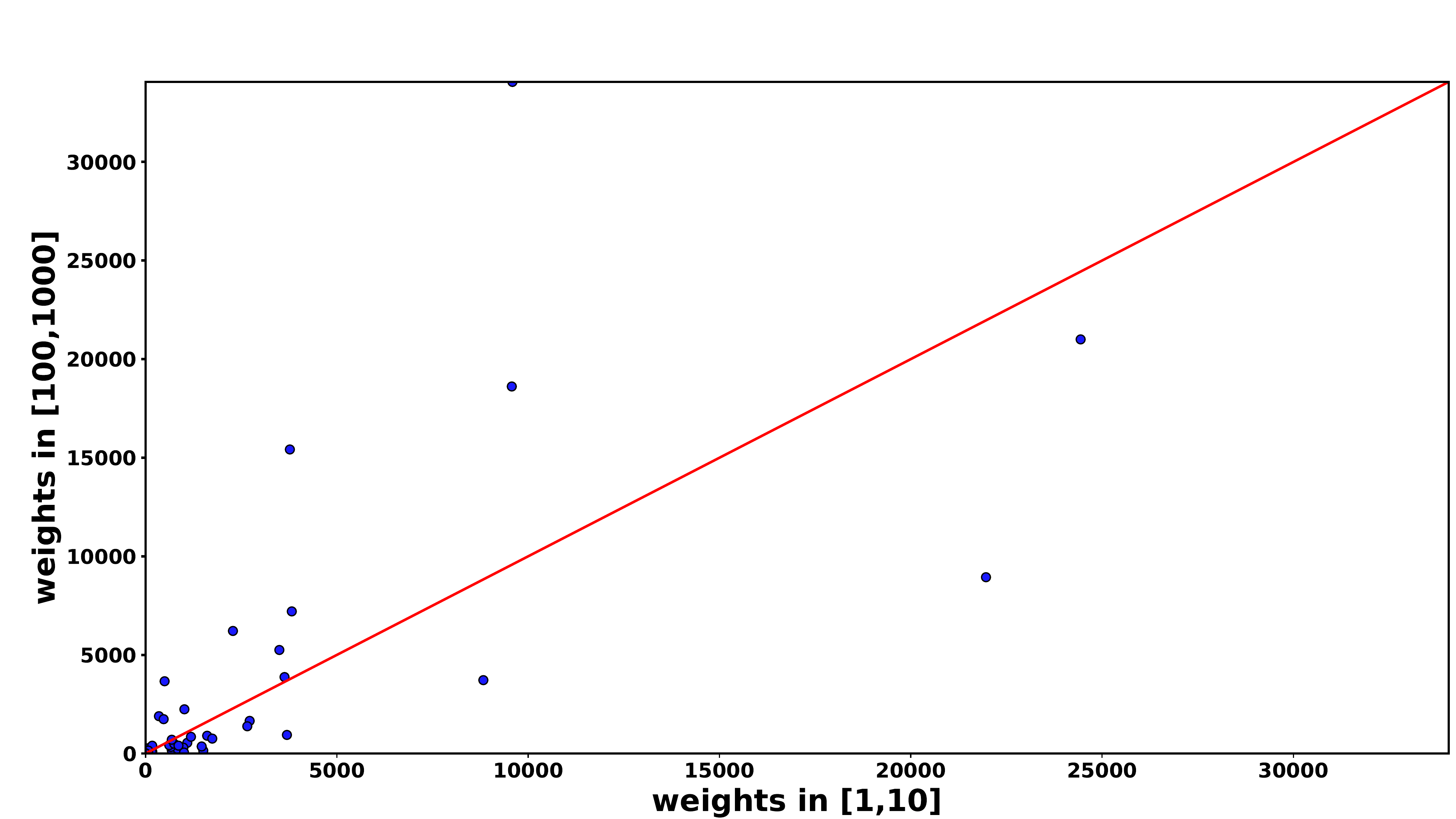}
	\caption{\bdd{2} without PB cuts.}
	\label{fig:compare_capbdd}
	\end{subfigure}%
\hfill
	\begin{subfigure}{0.5\textwidth}
	\includegraphics[width=\textwidth]{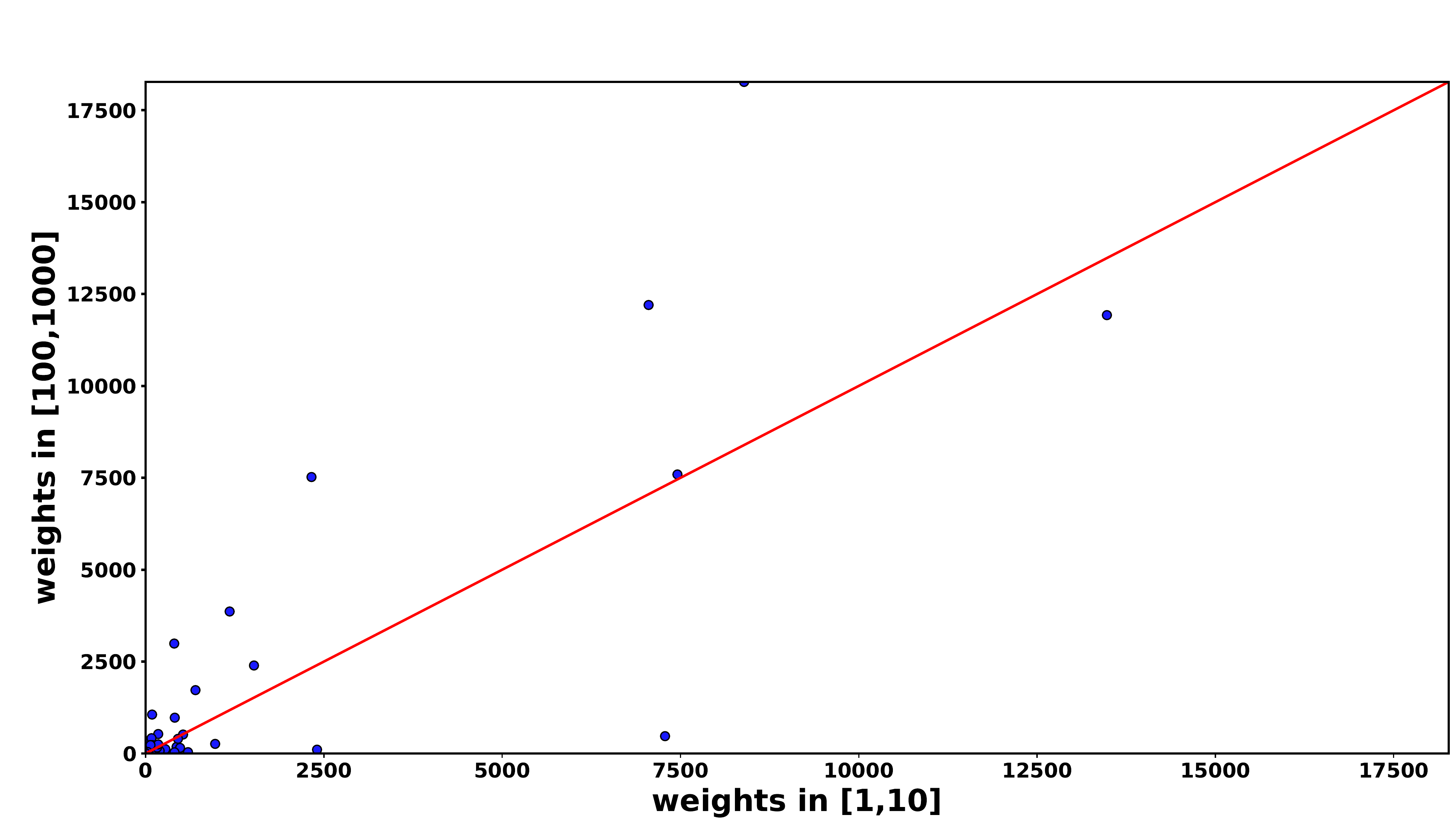}
	\caption{\bdd{2} with PB cuts.}
	\label{fig:compare_capbdd_pb}
		\end{subfigure}%
		\hfill
			\begin{subfigure}{0.5\textwidth}
			\includegraphics[width=\textwidth]{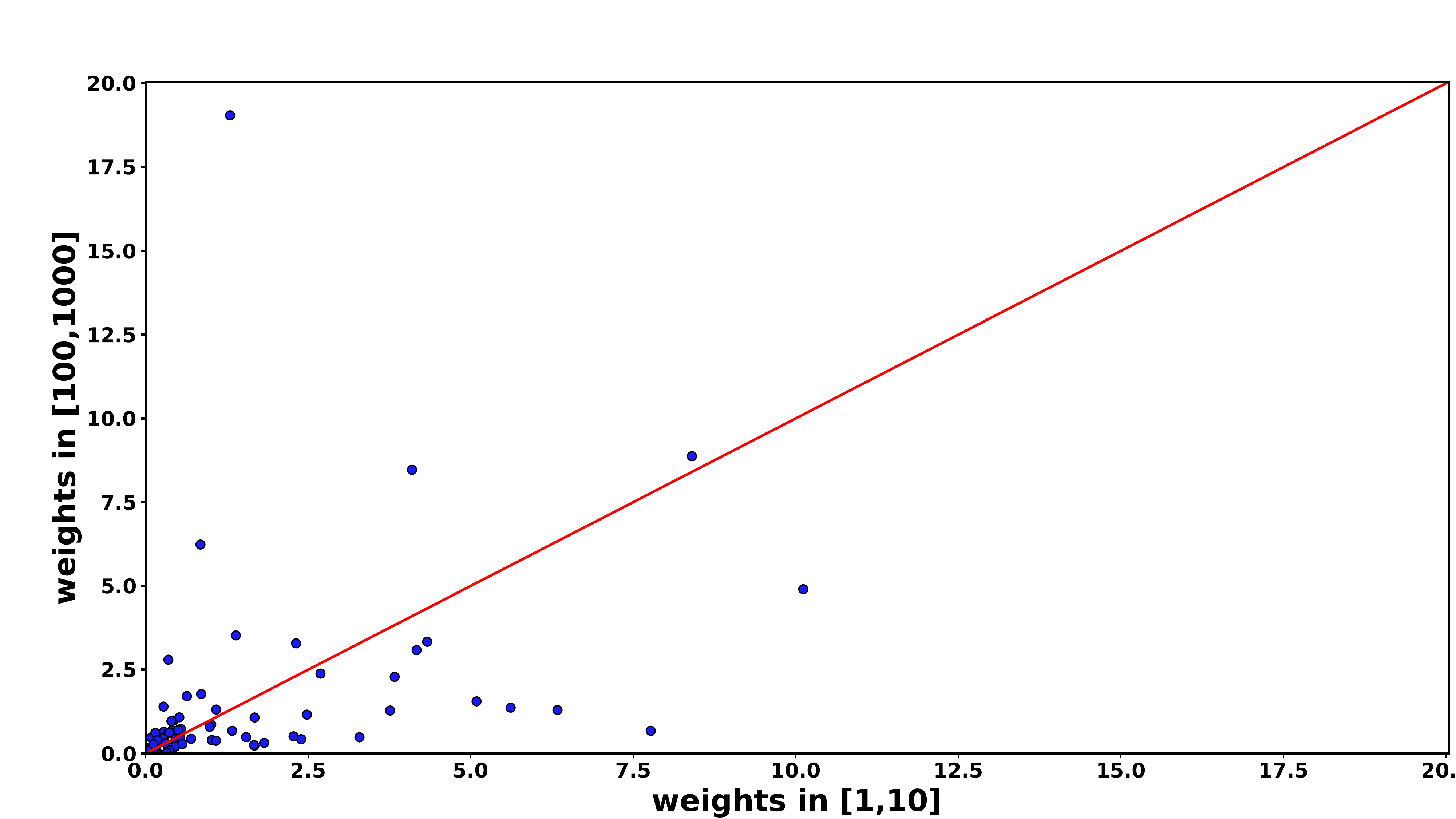}
			\caption{\bdd{3} without PB cuts.}
			\label{fig:compare_bdd}
		\end{subfigure}%
		\hfill
		\begin{subfigure}{0.5\textwidth}
			\includegraphics[width=\textwidth]{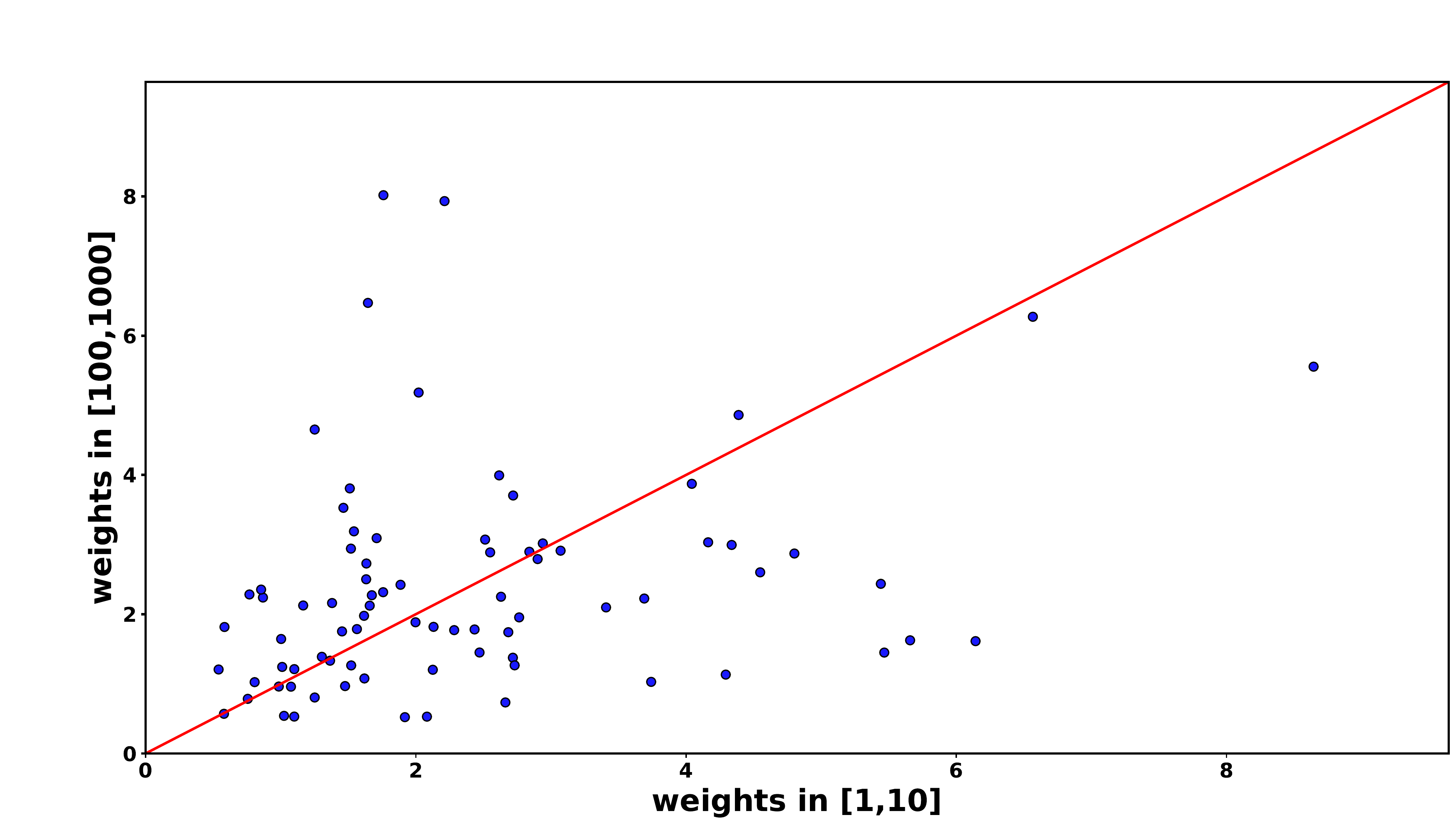}
			\caption{\bdd{3} with PB cuts.}
			\label{fig:compare_bdd_pb}
		\end{subfigure}%
			\caption{Comparison of solution times for instances with weights drawn from $[1,10]$  and from $[100,1000]$.  The red line shows where the computation time is equal.}
		\label{fig:compare}
\end{figure}%

\section{BDD Size Analysis} \label{app:bddsize}

During the construction of the $850$-scenario BDDs for each instance, we record statistics about BDD structure. Figures \ref{fig:smwds_bdd0arcsize} and \ref{fig:smwds_bdd1arcsize} show that the mean number of arcs in \bdd{2} is always more than the mean number of arcs in \bdd{3}, and as presented in Section 7.2, \bdd{2} has more mean nodes than \bdd{3}. As such, we conclude that the \bdd{3} method always creates smaller BDDs than the \bdd{2} method. For \bdd{2}, we always have the same mean number of  one-arcs and zero-arcs, this is because we have no constraints $\mathcal{Y}(\omega)$ for this application and so any infeasibility can be handled via the capacitated arcs. This means each node has one of each arc type, (none are removed due to transitioning to an infeasible state). Finally, we remark that approximately 18\% of zero-arcs are capacitated in \bdd{2}. We report detailed statistics in Tables \ref{table:30n},  \ref{table:35n},  \ref{table:40n},  \ref{table:45n},  and \ref{table:50n}.

\begin{figure}[h]\centering
	\includegraphics[width=0.6\textwidth]{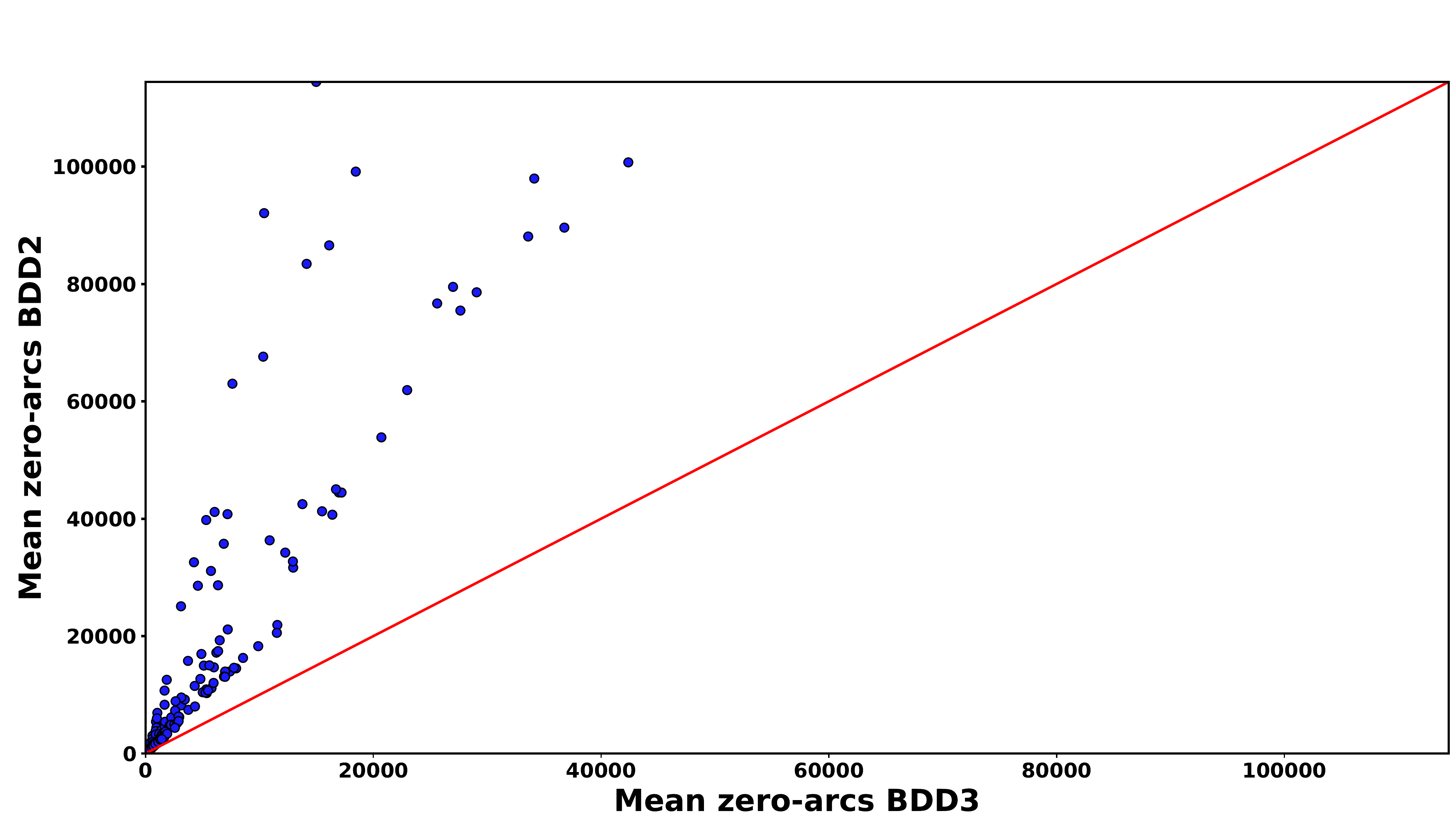}
	\caption{Comparison of mean number of zero-arcs in the BDDs for each instance, the red line shows where the mean number of arcs is equal. Instances where \bdd{2} hit the memory limit have been omitted.}
	\label{fig:smwds_bdd0arcsize}
\end{figure}%

\begin{figure}[h]\centering
	\includegraphics[width=0.6\textwidth]{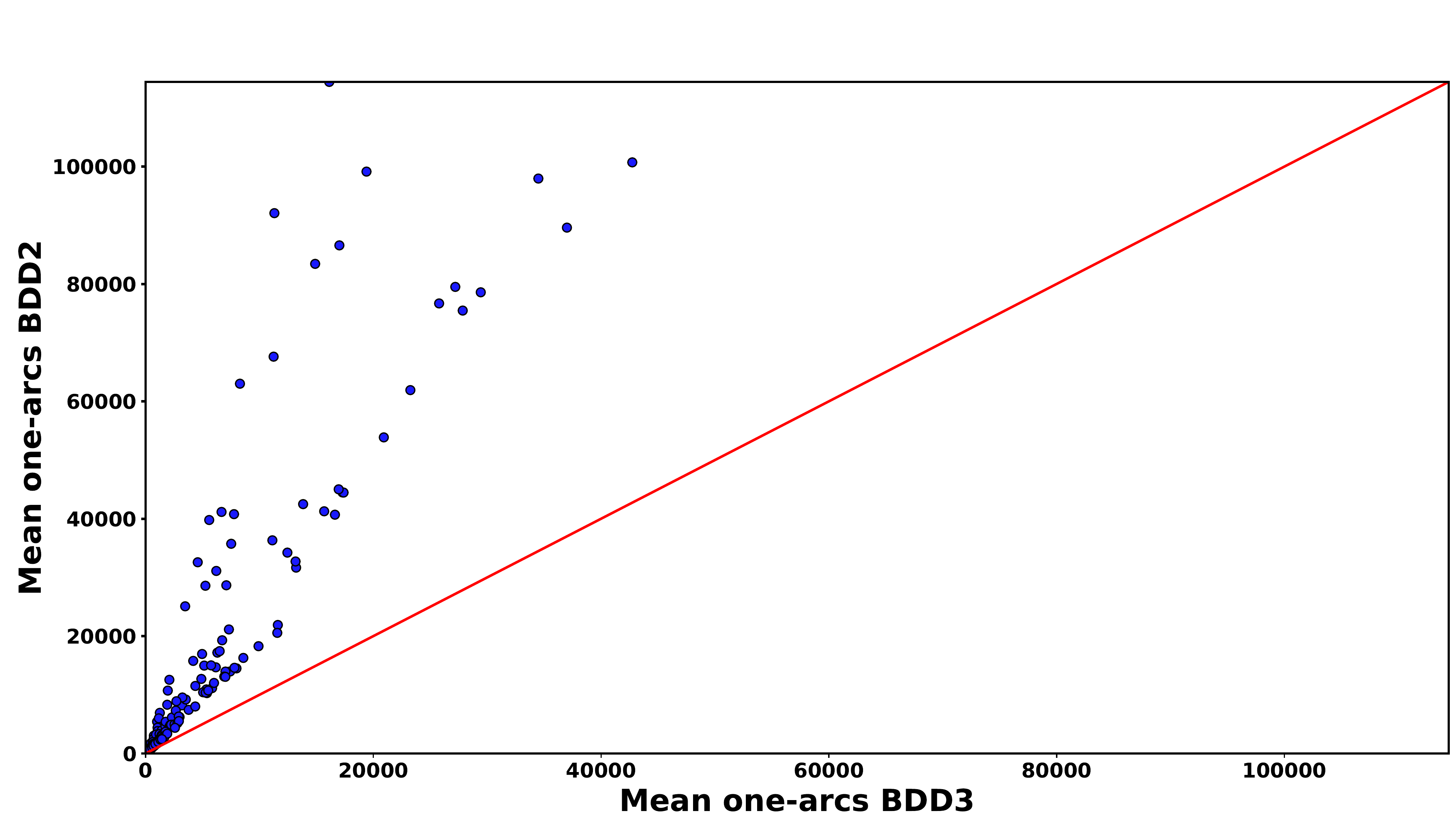}
	\caption{Comparison of mean number of one-arcs in the BDDs for each instance, the red line shows where the mean number of arcs is equal. Instances where \bdd{2} hit the memory limit have been omitted.}
	\label{fig:smwds_bdd1arcsize}
\end{figure}%

\newpage
 \newgeometry{left=2.5cm,bottom=0.1cm}
\begin{longtable}{|r|r|r|r|r|r|r|r|r|r|}
		\caption{BDD size results for $|\vertexset| = 30$. \label{table:30n}}\\
			\hline
					\multicolumn{2}{|l|}{~}  & \multicolumn{4}{c|}{\bdd{2}} & \multicolumn{3}{ c|}{\bdd{3}}\\
					\hline
edge 		& seed &  mean &   mean	&mean &  mean \# &    mean &   mean &mean  \\ 
	 density &  			& $|\nodes^\omega|$	&   $| \onearcs|$ 	&  $| \zeroarcs|$ &  cap.\  arcs&  $|\nodes^\omega|$ 			&   $| \onearcs|$ 	&  $| \zeroarcs|$  \\ 
  			&  				&  		per BDD  				& per BDD  				&  per BDD &  		per BDD  				& per BDD  				&  per BDD &  per BDD\\ \hline
0.2 & 0 & 3018.86 & 3017.86 & 3017.86 & 1114.21 & 724.30 & 723.30 & 602.16 \\ 
~ & 1 & 5420.35 & 5419.35 & 5419.35 & 1962.76 & 1009.27 & 1008.27 & 915.36 \\ 
~ & 2 & 6924.34 & 6923.34 & 6923.34 & 2689.62 & 1250.99 & 1249.99 & 1031.36 \\ 
~ & 3 & 8306.26 & 8305.26 & 8305.26 & 3233.21 & 1899.32 & 1898.32 & 1666.14 \\ 
~ & 4 & 4402.26 & 4401.26 & 4401.26 & 1394.33 & 1051.16 & 1050.16 & 944.08 \\ 
~ & 5 & 10715.90 & 10714.90 & 10714.90 & 4265.93 & 1950.00 & 1949.00 & 1663.82 \\ 
~ & 6 & 5992.64 & 5991.64 & 5991.64 & 2113.53 & 1163.27 & 1162.27 & 992.93 \\ 
~ & 7 & 1737.89 & 1736.89 & 1736.89 & 735.48 & 467.33 & 466.33 & 376.66 \\ 
~ & 8 & 3826.97 & 3825.97 & 3825.97 & 1427.94 & 1062.61 & 1061.61 & 881.27 \\ 
~ & 9 & 2520.30 & 2519.30 & 2519.30 & 940.76 & 710.76 & 709.76 & 602.68 \\ \hline
0.4 & 0 & 2769.06 & 2768.06 & 2768.06 & 668.72 & 1006.10 & 1005.10 & 971.61 \\ 
~ & 1 & 3816.87 & 3815.87 & 3815.87 & 936.23 & 1498.83 & 1497.83 & 1459.81 \\ 
~ & 2 & 3931.80 & 3930.80 & 3930.80 & 990.46 & 1415.15 & 1414.15 & 1369.65 \\ 
~ & 3 & 3233.86 & 3232.86 & 3232.86 & 792.84 & 921.04 & 920.04 & 887.57 \\ 
~ & 4 & 4609.23 & 4608.23 & 4608.23 & 1147.37 & 1739.53 & 1738.53 & 1676.26 \\ 
~ & 5 & 1961.03 & 1960.03 & 1960.03 & 544.44 & 644.06 & 643.06 & 613.76 \\ 
~ & 6 & 5400.92 & 5399.92 & 5399.92 & 1341.70 & 1761.20 & 1760.20 & 1710.92 \\ 
~ & 7 & 8200.66 & 8199.66 & 8199.66 & 1842.12 & 3207.28 & 3206.28 & 3131.03 \\ 
~ & 8 & 3312.92 & 3311.92 & 3311.92 & 858.91 & 1220.99 & 1219.99 & 1185.24 \\ 
~ & 9 & 6609.91 & 6608.91 & 6608.91 & 1590.08 & 2621.95 & 2620.95 & 2564.96 \\ \hline
0.6 & 0 & 1294.16 & 1293.16 & 1293.16 & 245.84 & 567.34 & 566.34 & 550.86 \\ 
~ & 1 & 2068.50 & 2067.50 & 2067.50 & 327.67 & 1022.37 & 1021.37 & 1009.02 \\ 
~ & 2 & 1690.09 & 1689.09 & 1689.09 & 285.69 & 720.96 & 719.96 & 710.30 \\ 
~ & 3 & 1674.31 & 1673.31 & 1673.31 & 272.27 & 723.34 & 722.34 & 714.43 \\ 
~ & 4 & 1277.53 & 1276.53 & 1276.53 & 227.00 & 564.40 & 563.40 & 555.51 \\ 
~ & 5 & 1256.47 & 1255.47 & 1255.47 & 242.12 & 684.39 & 683.39 & 671.27 \\ 
~ & 6 & 1373.36 & 1372.36 & 1372.36 & 265.19 & 643.71 & 642.71 & 630.44 \\ 
~ & 7 & 1259.99 & 1258.99 & 1258.99 & 232.17 & 615.56 & 614.56 & 604.10 \\ 
~ & 8 & 3136.36 & 3135.36 & 3135.36 & 477.88 & 1409.16 & 1408.16 & 1395.90 \\ 
~ & 9 & 2392.07 & 2391.07 & 2391.07 & 386.25 & 1208.65 & 1207.65 & 1191.20 \\ \hline
0.8 & 0 & 443.08 & 442.08 & 442.08 & 53.67 & 224.96 & 223.96 & 220.23 \\ 
~ & 1 & 597.07 & 596.07 & 596.07 & 68.78 & 330.32 & 329.32 & 324.46 \\ 
~ & 2 & 466.08 & 465.08 & 465.08 & 55.13 & 234.86 & 233.86 & 232.28 \\ 
~ & 3 & 418.49 & 417.49 & 417.49 & 53.89 & 217.67 & 216.67 & 212.44 \\ 
~ & 4 & 464.73 & 463.73 & 463.73 & 61.59 & 251.38 & 250.38 & 246.08 \\ 
~ & 5 & 677.15 & 676.15 & 676.15 & 81.13 & 377.80 & 376.80 & 371.61 \\ 
~ & 6 & 469.29 & 468.29 & 468.29 & 60.19 & 260.03 & 259.03 & 254.11 \\ 
~ & 7 & 462.66 & 461.66 & 461.66 & 65.05 & 260.99 & 259.99 & 253.24 \\ 
~ & 8 & 540.88 & 539.88 & 539.88 & 68.64 & 295.15 & 294.15 & 289.23 \\ 
~ & 9 & 507.49 & 506.49 & 506.49 & 60.49 & 272.28 & 271.28 & 268.15 \\ \hline
\end{longtable}

\newpage
\begin{longtable}{|r|r|r|r|r|r|r|r|r|r|}
	\caption{BDD size results for $|\vertexset| = 35$. \label{table:35n}}\\
	\hline
					\multicolumn{2}{|l|}{~}  & \multicolumn{4}{c|}{\bdd{2}} & \multicolumn{3}{ c|}{\bdd{3}}\\
	\hline
	edge 		& seed &  mean &   mean	&mean &  mean \# &    mean &   mean &mean  \\ 
	density &  			& $|\nodes^\omega|$	&   $| \onearcs|$ 	&  $| \zeroarcs|$ &  cap.\  arcs&  $|\nodes^\omega|$ 			&   $| \onearcs|$ 	&  $| \zeroarcs|$   \\ 
	&  				&  		per BDD  				& per BDD  				&  per BDD &  		per BDD  				& per BDD  				&  per BDD &  per BDD\\ \hline

 0.2 & 0 & 32580.90 & 32579.90 & 32579.90 & 11081.80 & 4578.36 & 4577.36 & 4246.18 \\ \hline
~ & 1 & 41154.10 & 41153.10 & 41153.10 & 13437.50 & 6668.76 & 6667.76 & 6060.34 \\ 
~ & 2 & 12553.00 & 12552.00 & 12552.00 & 4485.07 & 2087.75 & 2086.75 & 1856.42 \\ 
~ & 3 & 63004.10 & 63003.10 & 63003.10 & 19502.20 & 8286.36 & 8285.36 & 7623.79 \\ 
~ & 4 & 40779.00 & 40778.00 & 40778.00 & 13589.90 & 7764.78 & 7763.78 & 7191.78 \\ 
~ & 5 & 25077.70 & 25076.70 & 25076.70 & 9576.54 & 3475.27 & 3474.27 & 3108.48 \\ 
~ & 6 & 31101.90 & 31100.90 & 31100.90 & 9417.06 & 6207.69 & 6206.69 & 5739.17 \\ 
~ & 7 & 15775.40 & 15774.40 & 15774.40 & 5433.17 & 4182.02 & 4181.02 & 3723.79 \\ 
~ & 8 & 39781.10 & 39780.10 & 39780.10 & 12794.90 & 5586.29 & 5585.29 & 5322.22 \\ 
~ & 9 & 28580.80 & 28579.80 & 28579.80 & 9080.37 & 5252.05 & 5251.05 & 4591.24 \\ \hline
0.4 & 0 & 19288.90 & 19287.90 & 19287.90 & 4260.05 & 6723.02 & 6722.02 & 6508.76 \\ 
~ & 1 & 6137.60 & 6136.60 & 6136.60 & 1278.35 & 2310.96 & 2309.96 & 2250.50 \\ 
~ & 2 & 9192.87 & 9191.87 & 9191.87 & 2006.57 & 3530.45 & 3529.45 & 3443.09 \\ 
~ & 3 & 7309.37 & 7308.37 & 7308.37 & 1697.80 & 2649.26 & 2648.26 & 2591.55 \\ 
~ & 4 & 17184.40 & 17183.40 & 17183.40 & 3645.04 & 6301.42 & 6300.42 & 6212.91 \\ 
~ & 5 & 14681.00 & 14680.00 & 14680.00 & 3224.40 & 6165.96 & 6164.96 & 5995.87 \\ 
~ & 6 & 12702.60 & 12701.60 & 12701.60 & 2819.79 & 4895.53 & 4894.53 & 4810.14 \\ 
~ & 7 & 9539.16 & 9538.16 & 9538.16 & 2357.20 & 3233.98 & 3232.98 & 3137.95 \\ 
~ & 8 & 8903.68 & 8902.68 & 8902.68 & 2236.32 & 2705.45 & 2704.45 & 2631.44 \\ 
~ & 9 & 11509.30 & 11508.30 & 11508.30 & 2399.82 & 4368.44 & 4367.44 & 4311.02 \\ \hline
0.6 & 0 & 4560.06 & 4559.06 & 4559.06 & 741.14 & 2092.15 & 2091.15 & 2068.77 \\ 
~ & 1 & 1956.92 & 1955.92 & 1955.92 & 332.31 & 998.92 & 997.92 & 979.97 \\ 
~ & 2 & 2840.13 & 2839.13 & 2839.13 & 462.63 & 1465.41 & 1464.41 & 1443.62 \\ 
~ & 3 & 2811.50 & 2810.50 & 2810.50 & 456.18 & 1380.67 & 1379.67 & 1359.94 \\ 
~ & 4 & 2773.21 & 2772.21 & 2772.21 & 437.50 & 1336.89 & 1335.89 & 1316.86 \\ 
~ & 5 & 3091.57 & 3090.57 & 3090.57 & 483.19 & 1469.51 & 1468.51 & 1457.98 \\ 
~ & 6 & 1593.04 & 1592.04 & 1592.04 & 262.48 & 782.79 & 781.79 & 771.85 \\ 
~ & 7 & 2518.44 & 2517.44 & 2517.44 & 410.63 & 1245.07 & 1244.07 & 1228.23 \\ 
~ & 8 & 2955.18 & 2954.18 & 2954.18 & 495.02 & 1429.02 & 1428.02 & 1414.85 \\ 
~ & 9 & 2163.63 & 2162.63 & 2162.63 & 365.13 & 1137.66 & 1136.66 & 1121.04 \\ \hline
0.8 & 0 & 758.41 & 757.41 & 757.41 & 86.49 & 441.41 & 440.41 & 435.52 \\ 
~ & 1 & 882.72 & 881.72 & 881.72 & 95.58 & 466.12 & 465.12 & 459.88 \\ 
~ & 2 & 687.38 & 686.38 & 686.38 & 84.28 & 358.86 & 357.86 & 352.73 \\ 
~ & 3 & 598.93 & 597.93 & 597.93 & 69.30 & 333.90 & 332.90 & 328.87 \\ 
~ & 4 & 907.33 & 906.33 & 906.33 & 97.77 & 461.97 & 460.97 & 457.37 \\ 
~ & 5 & 919.44 & 918.44 & 918.44 & 96.50 & 547.12 & 546.12 & 540.55 \\ 
~ & 6 & 1082.32 & 1081.32 & 1081.32 & 117.59 & 591.52 & 590.52 & 586.18 \\ 
~ & 7 & 799.60 & 798.60 & 798.60 & 102.57 & 465.37 & 464.37 & 456.96 \\ 
~ & 8 & 1113.26 & 1112.26 & 1112.26 & 119.18 & 594.97 & 593.97 & 588.88 \\ 
~ & 9 & 657.24 & 656.24 & 656.24 & 76.21 & 381.44 & 380.44 & 376.91 \\ \hline
\end{longtable}

\newpage
\begin{longtable}{|r|r|r|r|r|r|r|r|r|r|}
	\caption{BDD size results for $|\vertexset| = 40$, `-' indicates the memory limit was hit. \label{table:40n}}\\
	\hline
					\multicolumn{2}{|l|}{~}  & \multicolumn{4}{c|}{\bdd{2}} & \multicolumn{3}{ c|}{\bdd{3}}\\
	\hline
	edge 		& seed &  mean &   mean	&mean &  mean \# &    mean &   mean &mean  \\ 
	density &  			& $|\nodes^\omega|$	&   $| \onearcs|$ 	&  $| \zeroarcs|$ &  cap.\ arcs&  $|\nodes^\omega|$ 			&   $| \onearcs|$ 	&  $| \zeroarcs|$   \\ 
	&  				&  		per BDD  				& per BDD  				&  per BDD &  		per BDD  				& per BDD  				&  per BDD &  per BDD\\ \hline
 0.2 & 0 & 35731.10 & 35730.10 & 35730.10 & 11506.30 & 7516.93 & 7515.93 & 6866.43 \\ 
~ & 1 & - & - & - & - & 23708.40 & 23707.40 & 22633.60 \\ 
~ & 2 & 83423.20 & 83422.20 & 83422.20 & 25173.30 & 14889.00 & 14888.00 & 14139.80 \\ 
~ & 3 & 67611.20 & 67610.20 & 67610.20 & 21236.00 & 11242.70 & 11241.70 & 10326.80 \\ 
~ & 4 & - & - & - & - & 26396.10 & 26395.10 & 24908.60 \\ 
~ & 5 & 86574.40 & 86573.40 & 86573.40 & 25498.60 & 17015.70 & 17014.70 & 16121.20 \\ 
~ & 6 & 92062.80 & 92061.80 & 92061.80 & 28073.70 & 11309.40 & 11308.40 & 10408.20 \\ 
~ & 7 & 114425.00 & 114424.00 & 114424.00 & 37122.40 & 16127.00 & 16126.00 & 14981.80 \\ 
~ & 8 & 28658.40 & 28657.40 & 28657.40 & 9826.14 & 7088.74 & 7087.74 & 6358.44 \\ 
~ & 9 & 99140.90 & 99139.90 & 99139.90 & 30424.50 & 19397.80 & 19396.80 & 18455.60 \\ \hline
0.4 & 0 & 14973.90 & 14972.90 & 14972.90 & 2797.87 & 5147.83 & 5146.83 & 5114.27 \\ 
~ & 1 & 53858.30 & 53857.30 & 53857.30 & 9643.59 & 20918.00 & 20917.00 & 20704.20 \\ 
~ & 2 & 41264.30 & 41263.30 & 41263.30 & 7888.16 & 15681.20 & 15680.20 & 15493.40 \\ 
~ & 3 & 36318.40 & 36317.40 & 36317.40 & 7787.66 & 11132.80 & 11131.80 & 10892.60 \\ 
~ & 4 & 17441.30 & 17440.30 & 17440.30 & 3861.26 & 6505.10 & 6504.10 & 6368.71 \\ 
~ & 5 & 31666.90 & 31665.90 & 31665.90 & 6486.48 & 13217.60 & 13216.60 & 12959.00 \\ 
~ & 6 & 16951.30 & 16950.30 & 16950.30 & 3516.88 & 4962.26 & 4961.26 & 4899.94 \\ 
~ & 7 & 21132.50 & 21131.50 & 21131.50 & 4571.27 & 7316.75 & 7315.75 & 7214.10 \\ 
~ & 8 & 42486.00 & 42485.00 & 42485.00 & 7746.49 & 13832.50 & 13831.50 & 13770.20 \\ 
~ & 9 & 15013.10 & 15012.10 & 15012.10 & 3481.65 & 5752.12 & 5751.12 & 5601.79 \\ \hline
0.6 & 0 & 6219.20 & 6218.20 & 6218.20 & 921.96 & 2984.65 & 2983.65 & 2955.24 \\ 
~ & 1 & 7438.49 & 7437.49 & 7437.49 & 1083.39 & 3781.67 & 3780.67 & 3755.73 \\ 
~ & 2 & 6110.10 & 6109.10 & 6109.10 & 946.17 & 2940.47 & 2939.47 & 2918.22 \\ 
~ & 3 & 3570.38 & 3569.38 & 3569.38 & 547.99 & 1857.76 & 1856.76 & 1838.03 \\ 
~ & 4 & 10913.90 & 10912.90 & 10912.90 & 1452.61 & 5337.63 & 5336.63 & 5319.85 \\ 
~ & 5 & 4847.15 & 4846.15 & 4846.15 & 709.71 & 2166.37 & 2165.37 & 2145.42 \\ 
~ & 6 & 4901.15 & 4900.15 & 4900.15 & 746.00 & 2245.30 & 2244.30 & 2227.91 \\ 
~ & 7 & 4962.42 & 4961.42 & 4961.42 & 749.74 & 2538.77 & 2537.77 & 2518.23 \\ 
~ & 8 & 6307.69 & 6306.69 & 6306.69 & 900.55 & 2910.85 & 2909.85 & 2894.84 \\ 
~ & 9 & 3704.52 & 3703.52 & 3703.52 & 515.74 & 1746.03 & 1745.03 & 1731.43 \\ \hline
0.8 & 0 & 1399.40 & 1398.40 & 1398.40 & 147.34 & 781.18 & 780.18 & 773.72 \\ 
~ & 1 & 1158.79 & 1157.79 & 1157.79 & 130.54 & 672.65 & 671.65 & 665.58 \\ 
~ & 2 & 1390.09 & 1389.09 & 1389.09 & 144.92 & 760.24 & 759.24 & 752.67 \\ 
~ & 3 & 1389.55 & 1388.55 & 1388.55 & 150.83 & 805.48 & 804.48 & 800.21 \\ 
~ & 4 & 1658.74 & 1657.74 & 1657.74 & 151.32 & 914.55 & 913.55 & 910.19 \\ 
~ & 5 & 1287.04 & 1286.04 & 1286.04 & 131.01 & 742.35 & 741.35 & 735.80 \\ 
~ & 6 & 1739.48 & 1738.48 & 1738.48 & 166.21 & 911.13 & 910.13 & 905.46 \\ 
~ & 7 & 996.49 & 995.49 & 995.49 & 116.56 & 577.86 & 576.86 & 570.90 \\ 
~ & 8 & 2343.45 & 2342.45 & 2342.45 & 199.35 & 1256.32 & 1255.32 & 1250.96 \\ 
~ & 9 & 1265.89 & 1264.89 & 1264.89 & 134.14 & 703.43 & 702.43 & 697.16 \\ \hline
\end{longtable}

\newpage
\begin{longtable}{|r|r|r|r|r|r|r|r|r|r|}
	\caption{BDD size results for $|\vertexset| = 45$, `-' indicates the memory limit was hit. \label{table:45n}}\\
	\hline
					\multicolumn{2}{|l|}{~}  & \multicolumn{4}{c|}{\bdd{2}} & \multicolumn{3}{ c|}{\bdd{3}}\\
	\hline
	edge 		& seed &  mean &   mean	&mean &  mean \# &    mean &   mean &mean  \\ 
	density &  			& $|\nodes^\omega|$	&   $| \onearcs|$ 	&  $| \zeroarcs|$ &  cap.\ arcs&  $|\nodes^\omega|$ 			&   $| \onearcs|$ 	&  $| \zeroarcs|$  \\ 
	&  				&  		per BDD  				& per BDD  				&  per BDD &  		per BDD  				& per BDD  				&  per BDD &  per BDD\\ \hline
	 \hline
	0.2 & 0 & - & - & - & - & 62105.50 & 62104.50 & 58704.50 \\ 
	~ & 1 & - & - & - & - & 102714.00 & 102713.00 & 98640.60 \\ 
	~ & 2 & - & - & - & - & 22311.60 & 22310.60 & 20322.30 \\ 
	~ & 3 & - & - & - & - & 51947.30 & 51946.30 & 49120.50 \\ 
	~ & 4 & - & - & - & - & 40946.40 & 40945.40 & 37888.90 \\ 
	~ & 5 & - & - & - & - & 29582.10 & 29581.10 & 27900.30 \\ 
	~ & 6 & - & - & - & - & 52297.50 & 52296.50 & 49495.40 \\ 
	~ & 7 & - & - & - & - & 51002.40 & 51001.40 & 47613.20 \\ 
	~ & 8 & - & - & - & - & 130971.00 & 130970.00 & 125546.00 \\ 
	~ & 9 & - & - & - & - & 40342.50 & 40341.50 & 38585.20 \\ \hline
	0.4 & 0 & 75463.80 & 75462.80 & 75462.80 & 13958.70 & 27846.30 & 27845.30 & 27641.10 \\ 
	~ & 1 & 34224.10 & 34223.10 & 34223.10 & 6719.66 & 12452.20 & 12451.20 & 12260.60 \\ 
	~ & 2 & 40684.30 & 40683.30 & 40683.30 & 7995.72 & 16626.70 & 16625.70 & 16398.00 \\ 
	~ & 3 & 32715.50 & 32714.50 & 32714.50 & 6355.53 & 13169.10 & 13168.10 & 12934.10 \\ 
	~ & 4 & 44474.70 & 44473.70 & 44473.70 & 9005.59 & 17292.90 & 17291.90 & 16970.40 \\ 
	~ & 5 & 44455.50 & 44454.50 & 44454.50 & 8472.07 & 17384.60 & 17383.60 & 17204.90 \\ 
	~ & 6 & 76680.00 & 76679.00 & 76679.00 & 14324.10 & 25773.90 & 25772.90 & 25601.90 \\ 
	~ & 7 & 97957.60 & 97956.60 & 97956.60 & 17410.00 & 34488.70 & 34487.70 & 34127.40 \\ 
	~ & 8 & 45009.90 & 45008.90 & 45008.90 & 8647.36 & 16955.80 & 16954.80 & 16718.10 \\ 
	~ & 9 & 61911.80 & 61910.80 & 61910.80 & 11393.90 & 23249.80 & 23248.80 & 22966.60 \\ \hline
	0.6 & 0 & 5191.71 & 5190.71 & 5190.71 & 801.60 & 2801.46 & 2800.46 & 2770.37 \\ 
	~ & 1 & 10430.70 & 10429.70 & 10429.70 & 1356.74 & 5052.33 & 5051.33 & 5019.59 \\ 
	~ & 2 & 8008.23 & 8007.23 & 8007.23 & 1109.41 & 4357.89 & 4356.89 & 4336.15 \\ 
	~ & 3 & 10285.70 & 10284.70 & 10284.70 & 1349.79 & 5387.60 & 5386.60 & 5364.74 \\ 
	~ & 4 & 10369.90 & 10368.90 & 10368.90 & 1352.94 & 5278.57 & 5277.57 & 5263.40 \\ 
	~ & 5 & 11135.90 & 11134.90 & 11134.90 & 1568.53 & 5839.28 & 5838.28 & 5787.67 \\ 
	~ & 6 & 10733.90 & 10732.90 & 10732.90 & 1306.78 & 5502.45 & 5501.45 & 5483.80 \\ 
	~ & 7 & 5524.69 & 5523.69 & 5523.69 & 787.34 & 2924.02 & 2923.02 & 2899.98 \\ 
	~ & 8 & 12020.10 & 12019.10 & 12019.10 & 1568.62 & 6007.58 & 6006.58 & 5968.16 \\ 
	~ & 9 & 13904.40 & 13903.40 & 13903.40 & 1843.16 & 7164.71 & 7163.71 & 7148.26 \\ \hline
	0.8 & 0 & 1663.57 & 1662.57 & 1662.57 & 158.38 & 892.82 & 891.82 & 886.51 \\ 
	~ & 1 & 2509.13 & 2508.13 & 2508.13 & 218.19 & 1486.72 & 1485.72 & 1478.44 \\ 
	~ & 2 & 2025.95 & 2024.95 & 2024.95 & 184.70 & 1141.07 & 1140.07 & 1134.87 \\ 
	~ & 3 & 1700.56 & 1699.56 & 1699.56 & 173.70 & 1014.69 & 1013.69 & 1008.64 \\ 
	~ & 4 & 3077.11 & 3076.11 & 3076.11 & 273.40 & 1731.44 & 1730.44 & 1723.24 \\ 
	~ & 5 & 1906.07 & 1905.07 & 1905.07 & 172.47 & 1035.33 & 1034.33 & 1029.43 \\ 
	~ & 6 & 2029.38 & 2028.38 & 2028.38 & 174.05 & 1179.28 & 1178.28 & 1173.85 \\ 
	~ & 7 & 2774.18 & 2773.18 & 2773.18 & 257.67 & 1540.59 & 1539.59 & 1535.02 \\ 
	~ & 8 & 1606.21 & 1605.21 & 1605.21 & 160.39 & 861.76 & 860.76 & 855.43 \\ 
	~ & 9 & 2512.36 & 2511.36 & 2511.36 & 224.34 & 1413.61 & 1412.61 & 1408.48 \\ \hline
\end{longtable}

\newpage
\begin{longtable}{|r|r|r|r|r|r|r|r|r|r|}
	\caption{BDD size results for $|\vertexset| = 50$, `-' indicates the memory limit was hit. \label{table:50n}}\\
	\hline
					\multicolumn{2}{|l|}{~}  & \multicolumn{4}{c|}{\bdd{2}} & \multicolumn{3}{ c|}{\bdd{3}}\\
	\hline
	edge 		& seed &  mean &   mean	&mean &  mean \# &    mean &   mean &mean  \\ 
	density &  			& $|\nodes^\omega|$	&   $| \onearcs|$ 	&  $| \zeroarcs|$ &  cap.\ arcs&  $|\nodes^\omega|$ 			&   $| \onearcs|$ 	&  $| \zeroarcs|$  \\ 
	&  				&  		per BDD  				& per BDD  				&  per BDD &  		per BDD  				& per BDD  				&  per BDD &  per BDD\\ \hline
	 0.2 & 0 & - & - & - & - & 327442.00 & 327441.00 & 320174.00 \\ 
	~ & 1 & - & - & - & - & 170885.00 & 170884.00 & 162326.00 \\ 
	~ & 2 & - & - & - & - & 130126.00 & 130125.00 & 124547.00 \\ 
	~ & 3 & - & - & - & - & 136905.00 & 136904.00 & 129049.00 \\ 
	~ & 4 & - & - & - & - & 124376.00 & 124375.00 & 119414.00 \\ 
	~ & 5 & - & - & - & - & 69718.80 & 69717.80 & 66652.30 \\ 
	~ & 6 & - & - & - & - & 93547.50 & 93546.50 & 89920.60 \\ 
	~ & 7 & - & - & - & - & 68696.90 & 68695.90 & 64089.60 \\ 
	~ & 8 & - & - & - & - & 131671.00 & 131670.00 & 126477.00 \\ 
	~ & 9 & - & - & - & - & 115068.00 & 115067.00 & 110098.00 \\ \hline
	0.4 & 0 & - & - & - & - & 99474.90 & 99473.90 & 98615.80 \\ 
	~ & 1 & - & - & - & - & 46381.80 & 46380.80 & 45854.10 \\ 
	~ & 2 & 79499.30 & 79498.30 & 79498.30 & 15044.80 & 27194.60 & 27193.60 & 26995.90 \\ 
	~ & 3 & - & - & - & - & 52667.70 & 52666.70 & 52309.00 \\ 
	~ & 4 & - & - & - & - & 58799.00 & 58798.00 & 58509.80 \\ 
	~ & 5 & 88082.30 & 88081.30 & 88081.30 & 16200.00 & 33811.20 & 33810.20 & 33594.00 \\ 
	~ & 6 & 89597.50 & 89596.50 & 89596.50 & 15643.10 & 36997.80 & 36996.80 & 36770.00 \\ 
	~ & 7 & 100717.00 & 100716.00 & 100716.00 & 17943.30 & 42735.90 & 42734.90 & 42385.70 \\ 
	~ & 8 & 78578.30 & 78577.30 & 78577.30 & 14754.20 & 29430.90 & 29429.90 & 29070.10 \\ 
	~ & 9 & - & - & - & - & 66489.80 & 66488.80 & 66039.40 \\ \hline
	0.6 & 0 & 13972.40 & 13971.40 & 13971.40 & 1690.96 & 7425.82 & 7424.82 & 7410.40 \\ 
	~ & 1 & 14500.70 & 14499.70 & 14499.70 & 1705.05 & 7980.19 & 7979.19 & 7948.53 \\ 
	~ & 2 & 13955.60 & 13954.60 & 13954.60 & 1874.83 & 7011.65 & 7010.65 & 6980.21 \\ 
	~ & 3 & 18281.00 & 18280.00 & 18280.00 & 2288.24 & 9916.87 & 9915.87 & 9890.49 \\ 
	~ & 4 & 21894.70 & 21893.70 & 21893.70 & 2723.56 & 11612.20 & 11611.20 & 11568.40 \\ 
	~ & 5 & 14601.80 & 14600.80 & 14600.80 & 1843.30 & 7804.77 & 7803.77 & 7763.38 \\ 
	~ & 6 & 16282.30 & 16281.30 & 16281.30 & 2058.19 & 8583.95 & 8582.95 & 8556.23 \\ 
	~ & 7 & 13102.90 & 13101.90 & 13101.90 & 1716.00 & 6908.13 & 6907.13 & 6881.85 \\ 
	~ & 8 & 13059.40 & 13058.40 & 13058.40 & 1600.05 & 7002.98 & 7001.98 & 6979.62 \\ 
	~ & 9 & 20553.60 & 20552.60 & 20552.60 & 2364.84 & 11564.30 & 11563.30 & 11525.90 \\ \hline
	0.8 & 0 & 2754.49 & 2753.49 & 2753.49 & 210.63 & 1559.11 & 1558.11 & 1552.24 \\ 
	~ & 1 & 2740.98 & 2739.98 & 2739.98 & 221.27 & 1636.37 & 1635.37 & 1627.45 \\ 
	~ & 2 & 2093.24 & 2092.24 & 2092.24 & 194.85 & 1210.75 & 1209.75 & 1203.95 \\ 
	~ & 3 & 2343.25 & 2342.25 & 2342.25 & 180.12 & 1259.18 & 1258.18 & 1252.74 \\ 
	~ & 4 & 3374.97 & 3373.97 & 3373.97 & 259.02 & 1900.05 & 1899.05 & 1894.78 \\ 
	~ & 5 & 4373.31 & 4372.31 & 4372.31 & 311.40 & 2578.56 & 2577.56 & 2570.64 \\ 
	~ & 6 & 2068.93 & 2067.93 & 2067.93 & 177.21 & 1205.17 & 1204.17 & 1198.19 \\ 
	~ & 7 & 1958.27 & 1957.27 & 1957.27 & 174.57 & 1120.64 & 1119.64 & 1113.73 \\ 
	~ & 8 & 2361.90 & 2360.90 & 2360.90 & 200.51 & 1304.02 & 1303.02 & 1295.72 \\ 
	~ & 9 & 2441.54 & 2440.54 & 2440.54 & 210.59 & 1432.74 & 1431.74 & 1424.29 \\ \hline
\end{longtable}

\restoregeometry

}


\end{document}